\def\standard{\oddsidemargin=0in
              \evensidemargin=0in
              \topmargin =-.5in
              \textheight=8.5in
              \textwidth=6.5in}
\def\bm{{\bf m}}
\def\b1{{\bf 1}}
\def\blot{\quad {$\vcenter{\vbox{\hrule height.4pt
             \hbox{\vrule width.4pt height.9ex \kern.9ex \vrule 
width.4pt}
             \hrule height.4pt}}$}}
\DeclarePairedDelimiter\ceil{\lceil}{\rceil}
\newtheorem{theorem}{Theorem}
\newtheorem{lemma}{Lemma}
\newtheorem{proposition}{Proposition}
\renewcommand\nomgroup[1]{%
  \item[\bfseries
  \ifstrequal{#1}{F}{Functions}{%
  \ifstrequal{#1}{V}{Variables}{%
  \ifstrequal{#1}{S}{Sets}{%
  \ifstrequal{#1}{E}{Empirical Study}}}}%
]}
\begin{document}

\title{Global-Local Metamodel Assisted Two-Stage Optimization via Simulation}

\author[ 1]{Wei Xie\thanks{Corresponding author: w.xie@northeastern.edu}
}
\author[2]{Yuan Yi}
\author[1]{Hua Zheng}
\affil[1]{Northeastern University, Boston, MA 02115}
\affil[2]{Rensselaer Polytechnic Institute, Troy, NY 12180}
\maketitle
\begin{abstract}

	To integrate strategic, tactical and operational decisions, the two-stage optimization has been widely used to guide dynamic decision making. In this paper, we study the two-stage stochastic programming for complex systems with unknown response estimated by simulation. 
	We introduce the global-local metamodel assisted two-stage optimization via simulation that can efficiently employ the simulation resource to iteratively solve for the optimal first- and second-stage  decisions. Specifically, at each visited first-stage decision, we develop a local metamodel to simultaneously solve a set of scenario-based second-stage optimization problems, which also allows us to estimate the optimality gap. Then, we construct a global metamodel accounting for the errors induced by: (1) using a finite number of scenarios to approximate the expected future cost occurring in the planning horizon, (2) second-stage optimality gap, and (3) finite visited first-stage decisions. Assisted by the global-local metamodel, we propose a new simulation optimization approach that can efficiently and iteratively search for the optimal first- and second-stage decisions. Our framework can guarantee the convergence of optimal solution for the discrete two-stage optimization with unknown objective, and the empirical study indicates that it achieves substantial efficiency and accuracy. 

\end{abstract}

\keywords{Simulation optimization, two-stage stochastic programming, Gaussian process metamodel, dynamic decision making}





	\section{Introduction}
	\label{intro}
	
	In many applications, such as high-tech manufacturing, bio-pharmaceutical supply chains, and smart power grids with renewable energy, we often need to integrate the strategic, tactical and operational decisions. For example, in the semiconductor manufacturing, the managers need to consider the facility planning and the ensuing production scheduling. The planning decision is made ``here and now", and the production scheduling is a ``wait and see" decision, which depends on the investment decision and also the realization of demand. \textit{To guide the dynamic decisions making, in this paper, we consider the two-stage stochastic programming,} 
	\begin{equation}
	\min_{\mathbf{x} \in \mathcal{X}}  \mbox{  }G(\mathbf{x})\equiv{c_0(\mathbf{x})}+\mbox{E}_{\pmb{\xi}}
	\left[
	\min_{\mathbf{y} \in \mathcal{Y}(\mathbf{x})} {q}(\mathbf{x}, \mathbf{y}, \pmb{\xi})\right]
	\label{eq: example}
	\end{equation} 
	where $\mathbf{x}$ denotes the first-stage action, e.g., the investment decision, $\mathbf{y}$ denotes the second-stage decision, e.g., production scheduling, $\pmb{\xi}$ denotes the random inputs, e.g., demands, $\mathcal{X}$ and $\mathcal{Y}(\mathbf{x})$ represent the feasible decision sets. The overall cost includes the investment cost and the expected production cost occurring in the planning horizon.

	The existing two-stage optimization approaches often assume that the response function ${q}(\mathbf{x}, \mathbf{y}, \pmb{\xi})$ is known \cite{Beale_1955, Dantzig_1955, Shapiro_Philpott_2007, shapiro_2009}. For example, it can be a linear or mixed-integer function.
	However, for many complex real systems, the response function could be unknown. For example, in the semiconductor manufacturing, the production processes can involve thousands of steps and the production cost function is unknown \citep{Liu_etal_2011}.
	{We resort to simulation for the unknown response under different scenarios $\pmb{\xi}$ and decisions $(\mathbf{x},\mathbf{y})$.}  
	Thus, in this paper, we consider the \textit{two-stage optimization via simulation (OvS)}. 
	
	Compared to the classical two-stage optimization with the known response function, such as two-stage stochastic linear programming problems, there exist additional challenges to solve the two-stage OvS listed as follows.	
	\begin{enumerate}
		\setcounter{enumi}{0}
		
		\item 
		When the response function is known, various algorithms exploiting the structural information haven been developed in the optimization community to search for the optimal solution,  including Benders decomposition algorithm and stochastic decomposition \citep{Ekin_Polson_Soyer_2014, Frauendorfer_1988, Sen_Higle_1991, Higle_Sen_1994}.  However, they cannot be employed and extended to the two-stage OvS of interest.

		\item  It is computationally demanding to solve the two-stage OvS. For complex stochastic systems, each simulation run could be computationally expensive. In addition, there could exist high prediction uncertainty and the number of scenarios used by the Sample Average Approximation (SAA) to approximate the expected future cost needs to be large \citep{Ahmed_Shapiro_2002,shapiro_2009}. Hence, there exists tremendous computational burden. 
		
		\item It is challenging to search for the optimal first-stage solution. Given limited computational resource, we often cannot find the true optimal second-stage decisions, which leads to the optimality gap. Thus, besides the finite sampling error introduced by SAA, this optimality gap can further lead to a biased estimate of the expected future cost.
		
	\end{enumerate}
	Hence, in this paper, we introduce a new simulation optimization approach that allows us to efficiently solve the complex two-stage OvS problems.

	Notice that the problem of interest is different with the existing black-box OvS problems studied in the simulation literature 
	\citep{Spall_1992, Jones_1998, Hong_Nelson_2006, Huang_etal_2006G, Sun_etal_2014}.  Existing studies tend to focus on the stochastic \textit{one-stage} OvS, 
	\begin{equation}
	\min_{\mathbf{x} \in \mathcal{X}}  \mbox{  }  \mbox{E}_{\pmb{\xi}} \left[ {f}(\mathbf{x}, \pmb{\xi}) \right].
	\label{eq: sto1}
	\end{equation} 
	Given a feasible decision $\mathbf{x}$, the system unknown mean response, denoted by $\mbox{E}_{\pmb{\xi}} \left[ {f}(\mathbf{x}, \pmb{\xi}) \right]$, can be assessed by simulation.
	Various simulation optimization algorithms are proposed to solve  one-stage OvS; see Henderson and Nelson~\citep{Henderson_Nelson_2006} for a review. In particular, metamodel-assisted optimization approaches can efficiently employ the simulation resource for the search of optimal solution \citep{Henderson_Nelson_2006, Fu_2014}. When there is no strong prior information on the mean response surface, the Gaussian process (GP) can be used to characterize the remaining metamodel estimation uncertainty. To balance exploration and exploitation, 
	Sun et al.~\citep{Sun_etal_2014} proposed a GP based search (GPS) algorithm for discrete optimization problems, and it can efficiently use the simulation resource and guarantee global convergence.
	
	\textit{Inspired by those one-stage metamodel assisted approaches, in this paper, we propose a {global-local metamodel-assisted two-stage OvS}.}  Specifically, at each visited first-stage action $\mathbf{x}$, we construct a local GP metamodel for $q_{\mathbf{x}}(\mathbf{y},\pmb{\xi})\equiv q(\mathbf{x},\mathbf{y},\pmb{\xi})$ so that we can simultaneously solve a large set of second-stage optimization problems sharing the same first-stage decision $\mathbf{x}$. Then, built on the search results from the second-stage optimization problems, we further develop a global metamodel accounting for various sources of errors. 
	Assisted by the global-local metamodel, we introduce a two-stage optimization via simulation approach that can efficiently employ the limited simulation budget to iteratively search for the optimal first- and second-stage decisions. 
	{Here, suppose each simulation run could be computationally expensive, say taking about a few days.}

	Therefore, the main contributions of this paper are as follows.  
	\begin{itemize}
		\setcounter{enumi}{0}
		

		\item We propose a global-local metamodel accounting for the finite sampling error introduced by using a finite number of scenarios to approximate the expected future cost and also the bias introduced by the optimality gap from the second-stage optimization. 
		
		\item Assisted by the global-local metamodel, we develop a two-stage optimization via simulation approach that can simultaneously control the impact from various sources of error 
		and efficiently employ the simulation budget to search for the optimal first- and second-stage decisions.

		\item 	Our approach can guarantee global convergence as the simulation budget increases. The empirical study also demonstrates that it has good and stable finite-sample performance.
		
	\end{itemize}

	The rest of the paper is organized as follows. Section~\ref{sec:liter} gives the literature review of relevant studies on two-stage stochastic programming and metamodel-assisted simulation optimization. We formally state the problem of interest in Section~\ref{sec:statement}. We develop {a global-local metamodel-assisted two-stage OvS approach for complex stochastic systems} in Section~\ref{sec: metamodel}. We study the finite sample performance of our approach in Section~\ref{sec:example}, and conclude the paper in Section~\ref{sec:conclusion}.


	\section{Background}
	\label{sec:liter}
	To integrate strategic, tactical and operational decisions, the classical two-stage stochastic optimization was introduced \citep{Beale_1955, Dantzig_1955}. 	
	Since the introduction, it has been applied to a wide range of areas, including electricity marketing \citep{Takriti_etal_1996, Ruiz_etal_2009, Tuohy_etal_2009} and the capacity expansion problem \citep{Dempster_etal_1981, Birge_Dempster_1996, Bailey_Jensen_Morton_1999}.
	In the classical two-stage stochastic optimization, the second-stage response function is assumed to be known, e.g., a linear function \citep{Beale_1955, Dantzig_1955} or a mixed integer one \citep{Takriti_etal_1996, Ruiz_etal_2009, Tuohy_etal_2009}. 
	Various algorithms have been proposed in the stochastic optimization community to solve it, such as two-stage linear program and two-stage mixed-integer program. To obtain the optimal solution, they typically exploit the structural information. For example, the L-shaped algorithm was proposed by Van Slyke and Wets~\citep{Slyke_Wets_1969}, and further developed in \cite{Louveaux_1980, Birge_Louveaux_1988, Frauendorfer_1988}. The Stochastic Decomposition (SD) algorithm was proposed in Higle and Sen~\citep{Sen_Higle_1991, Higle_Sen_1994}.

	However, for many real-world complex systems, e.g., semiconductor production systems and global bio-pharma supply chains, the second-stage response function is often unknown and the system outputs can be predicted through simulation. Therefore, the problem becomes a \textit{two-stage stochastic optimization via Simulation (OvS)}. In these situations, the classical algorithms cannot be easily extended.
	
	In the simulation community, OvS is an active research area. With recent technology advances, OvS offers a convenient way to support the decision making for a variety of complex stochastic systems and it has gained ever increasing importance \citep{Amaran_etal_2016, Nelson_2016}. 
	The existing OvS studies extensively focus on the one-stage OvS problems, including the ranking and selection \citep{Henderson_Nelson_2006}, the random search algorithm \citep{Zabinsky_2009}, the COMPASS algorithm \citep{Hong_Nelson_2006, Xu_Nelson_Hong_2010}, the simultaneous perturbation stochastic approximation (SPSA) algorithm \citep{Spall_1992}; see Fu~\citep{Fu_2014} for a comprehensive review.
	
	\textit{However, existing simulation-based optimization algorithms are typically developed for one-stage OvS problems in (\ref{eq: sto1}).} To the best of our knowledge, there is no rigorous algorithm proposed for two-stage stochastic OvS in the general situations. Different from the one-stage optimization, two-stage stochastic programming exhibits some unique features: the first-stage  optimization problem is stochastic and it depends on the results from second-stage optimization, while the second-stage optimization problems are conditional on the realizations of random events and they are deterministic.

	For the two-stage OvS, the estimation of expected cost is required. In many situations, the possible scenarios representing the second-stage uncertainty are too many or even infinite. Then, the sampling approach, SAA, uses a set of scenarios to approximate the expected future cost \citep{DANTZIG_GLYNN_1990}; see more discussion about SAA in \cite{Shapiro_Homem-de-Mello_1998, Shapiro_Homem-de-Mello_2000, Shapiro_Nemirovski_2005}.
	However, the SAA often introduces the finite sampling error. To accurately estimate the second-stage expected cost and control the impact of finite sampling error, a large scenario size is required \citep{shapiro_2009}. In addition, the expected objective in (\ref{eq: example}) relies on the second-stage optimal solutions. Without strong prior information on the second-stage response function, such as linearity, given a tight computational resource, there exists the optimality gap.

	For real-world complex stochastic systems, each simulation run could be computationally expensive. Thus, it is important to efficiently employ the tight simulation budget to search for promising first-stage decisions, while controlling the finite sampling error introduced by SAA and the optimality gap induced in the second-stage black-box optimization.
	In this paper, we propose a metamodel-assisted framework for two-stage optimization in (\ref{eq: example}). 
	When there is no strong prior information on the system response function, the Gaussian process metamodel is often used to provide a global prediction. In the past decades, it has received great attention in both deterministic and one-stage stochastic simulation optimization; see for example \cite{Jones_1998, Huang_etal_2006G, Quan_etal_2013,Sun_etal_2014}. Jones et al.~\citep{Jones_1998} introduced Kriging for deterministic OvS problems.
	Sun et al.~\citep{Sun_etal_2014} developed the GPS algorithm that employs Stochastic Kriging models introduced by Ankenman et al.~\citep{ankenman_nelson_staum_2010} for stochastic discrete optimization via simulation (DOvS). It can guarantee the global convergence, and also demonstrates good finite sample performances.
	
Notice that this paper is fundamentally different with our previous study in \citep{Xie_Yi_2016}. Here, we propose a simulation optimization approach that can efficiently \textit{solve} the {discrete} two-stage optimization for complex stochastic systems with unknown second-stage response function. The study in \citep{Xie_Yi_2016} focused on a \textit{generalized} two-stage dynamic decision model with nested risk measures, such as the conditional value-at-risk (CVaR). For a given decision policy, a metamodel-assisted approach was introduced to efficiently \textit{assess} the nested system risk, and further delivered a credible interval (CrI)  quantifying the simulation estimation error.  


	\section{Problem Description}
	\label{sec:statement}
	
	In this section, we describe the problem of interest. To guide the dynamic decision making for complex stochastic systems, we consider the two-stage stochastic programming with unknown response estimated by simulation,  
	\begin{equation}
	\min_{\mathbf{x} \in \mathcal{X}}  \mbox{  }G(\mathbf{x})\equiv{c_0(\mathbf{x})}+\mbox{E}_{\pmb{\xi}}
	\left[
	\min_{\mathbf{y} \in \mathcal{Y}(\mathbf{x})} {q}(\mathbf{x}, \mathbf{y}, \pmb{\xi})\right] 
	\label{eq.intro1}  
	\end{equation} 
	where $\mathbf{x}$ represents the first-stage decision with the feasible set, denoted by $\mathcal{X}$, and $\mathbf{y}$ represents the second-stage decision with the feasible set $\mathcal{Y}(\mathbf{x})$ depending on $\mathbf{x}$.  
	Suppose that both $c_0(\mathbf{x})$ and ${q}(\mathbf{x}, \mathbf{y}, \pmb{\xi})$ are continuous, and the sets $\mathcal{X}$ and $\mathcal{Y}(\mathbf{x})$ are discrete and finite.
	The random input variate $\pmb{\xi}$ follows the probability model, denoted by $F(\pmb{\xi})$, characterizing the \textit{prediction uncertainty}. The first-stage decision 
	$\mathbf{x}$ is made prior to the realization of $\pmb{\xi}$ and the second-stage decision $\mathbf{y}$ is made after the uncertainty is revealed. That means given any feasible first-stage decision $\mathbf{x}$ and a scenario $\pmb{\xi}$, we need to solve a second-stage optimization problem, 
	\begin{equation}
	Q(\mathbf{x},\pmb{\xi})\equiv \min_{\mathbf{y} \in \mathcal{Y}(\mathbf{x})} {q}(\mathbf{x}, \mathbf{y}, \pmb{\xi}),
	\label{eq.secondStage}
	\end{equation} 
	which leads to the optimal decision, denoted by $\mathbf{y}^\star(\mathbf{x},\pmb{\xi})$. 
	For simplification, suppose that there is a unique optimal first-stage decision, denoted by $\mathbf{x}^\star=\arg\min_{\mathbf{x}\in \mathcal{X}} G(\mathbf{x})$, with the objective $G(\mathbf{x}^\star)$.

	For complex stochastic systems, such as global biopharma supply chains and semiconductor manufacturing, both cost functions $G(\mathbf{x})$ and $q_{\mathbf{x}_i}(\mathbf{y}, \pmb{\xi})\equiv {q}(\mathbf{x}_i, \mathbf{y}, \pmb{\xi})$ at any $\mathbf{x}_i\in \mathcal{X}$ are often unknown, which can be estimated by simulation. 
	Here, we use a semiconductor production facility investment as an illustrative example. We consider the planning horizon with $T$ time periods, say $T$ quarters. 
	Given the first-stage decision $\mathbf{x}_i$, i.e., the capacity of production facility investment, and the realization of demands $\pmb{\xi}=({\xi}_1,\xi_2, \ldots,{\xi}_T)$ in the planning horizon, we want to find the production scheduling decision  $\mathbf{y}=({y}_1,y_2,\ldots,{y}_T)$ minimizing the operational cost $q_{\mathbf{x}_i}(\mathbf{y},\pmb{\xi})=\sum_{t=1}^Tc_t(\mathbf{x}_i,{y}_t,
	{\xi}_t)$, where ${\xi}_t$ and $c_t(\cdot)$ for $t=1,2,\ldots,T$ represent the \textit{accumulated} demand and the cost occurring in the $t$-th time period. The optimal second-stage decision depends on the investment decision $\mathbf{x}_i$ and also the realized demands $\pmb{\xi}$. Since the semiconductor production could involve thousands of processing steps, each simulation run for $q_{\mathbf{x}_i}(\mathbf{y},\pmb{\xi})$ could be computationally expensive. 
	{Notice that the scenario aggregation and decision discretization could be used to reduce the complexity of second-stage optimization; see the similar strategies recommended for approximate look-ahead models in \cite{WarrenPowell_2014}.}

	For some $\mathbf{x}_i \in \mathcal{X}$, the second-stage program could be infeasible, i.e., $\mathcal{Y}(\mathbf{x}_i)$ is empty \cite{Shapiro_Philpott_2007}. To avoid this issue, we assume that the problem of interest has relatively complete recourse, i.e., for every $\mathbf{x}_i \in \mathcal{X}$ and every scenario $\pmb{\xi} \in \Xi$, the second-stage recourse problem is always feasible, where $\Xi$ denotes the scenario set.  
	
	If the scenario set $\Xi$ only contains $J$ scenarios $\{\pmb{\xi}_1, \ldots, \pmb{\xi}_J \}$ with known probabilities $\{ p_1, \ldots, p_J \}$, then the expectation in Objective~(\ref{eq.intro1}) becomes
	$
	\mbox{E}_{\pmb{\xi}}[Q(\mathbf{x}_i, \pmb{\xi})] =\sum_{j=1}^{J} p_j Q(\mathbf{x}_i, \pmb{\xi}_j),
	$
	where $J$ is a finite integer. In this case, for each first-stage decision $\mathbf{x}_i$, only $J$ second-stage optimization problems in ~(\ref{eq.secondStage}) need to be solved.
	
	However, in many situations, $\pmb{\xi}$ is continuous or the number of possible scenarios is astronomical. At any visited $\mathbf{x}_i\in \mathcal{X}$, the SAA with $N(\mathbf{x}_i)$ number of scenarios can be used to approximate the expected future cost in (\ref{eq.intro1}),
	\begin{equation}
	\bar{G}^c(\mathbf{x}_i)\equiv {c_0(\mathbf{x}_i)}+ \frac{1}{N(\mathbf{x}_i)} \sum_{j=1}^{N(\mathbf{x}_i)} {q_{\mathbf{x}_i}( \mathbf{y}^\star_{ij},\pmb{\xi}_{ij})}
	\label{eq.SAA}  
	\end{equation} 	
	with $\pmb{\xi}_{ij}\stackrel{i.i.d.}\sim F(\pmb{\xi})$ for $j=1,2,\ldots,N(\mathbf{x}_i)$; see the introduction of SAA in \cite{Ahmed_Shapiro_2002, Shapiro_Homem-de-Mello_2000, Shapiro_2008}. The superscript $c$ in (\ref{eq.SAA}) indicates that the objective is estimated based on the {``correct"} optimal second-stage decisions $\mathbf{y}^\star_{ij}\equiv\mathbf{y}^\star(\mathbf{x}_i,\pmb{\xi}_{ij})$.
	Differing with the existing two-stage optimization studies in the literature that assume the response $q_{\mathbf{x}_i}(\mathbf{y},\pmb{\xi})$ known \citep{Dempster_etal_1981, Birge_Dempster_1996, Bailey_Jensen_Morton_1999, Ahmed_Shapiro_2002},  for each first-stage decision $\mathbf{x}_i$ and scenario $\pmb{\xi}_{ij}$, the second-stage optimization in our study is \textit{a black-box scenario-based deterministic optimization problem}. Given finite computational assignment, there often exists an \textit{optimality gap} introduced by using the estimated optimal solution, denoted by $\widehat{\mathbf{y}}^\star_{ij}$,	\begin{equation}
	{
	\delta(\mathbf{x}_i,\pmb{\xi}_{ij})\equiv q_{\mathbf{x}_i}(\widehat{\mathbf{y}}^\star_{ij},\pmb{\xi}_{ij}) - q_{\mathbf{x}_i}(\mathbf{y}^\star_{ij},\pmb{\xi}_{ij}).} 
	\label{eq.optGap}   
	\end{equation}


	In addition, the forecast uncertainty of $\pmb{\xi}$ occurring in the planning horizon could be large. For example, in the semiconductor and biopharma manufacturing, we frequently introduce new products and it is challenging to precisely predict their demands \citep{Liu_etal_2011, Chien_etal_2011, Ma_2011, Kaminsky_Wang_2015}. For the power grids with high renewable energy penetration, it is challenging to provide an accurate forecast of the wind and solar power generation when we make the unit commitment scheduling decision for the day-ahead market
	\citep{Takriti_etal_1996, Ruiz_etal_2009, Tuohy_etal_2009}. 
	Thus, it could require a large number of scenarios, $N(\mathbf{x}_i)$, to accurately estimate the expected future cost. It is computationally prohibitive to solve a large number of {black-box second-stage optimization problems}, $\min_{\mathbf{y}_{ij} \in \mathcal{Y}(\mathbf{x}_i)} {q}_{\mathbf{x}_i}(\mathbf{y}_{ij}, \pmb{\xi}_{ij})$ for $j=1,2,\ldots,N(\mathbf{x}_i)$.

	When we solve the two-stage optimization in (\ref{eq.intro1}) for complex stochastic systems via simulation, there are some important {observations} described as follows.    	     
	\begin{enumerate}
		\setcounter{enumi}{0}
		\item[] \textbf{Observation (1):} The {objective function values $G(\mathbf{x})$ and $G(\mathbf{x}^\prime)$ tend to be} similar when the first-stage decisions $\mathbf{x}$ and $\mathbf{x}^\prime$ are close to each other. 
		
		\item[] \textbf{Observation (2):} For each first-stage action $\mathbf{x}_i$, when we use SAA to approximate the expected cost,  there are $N(\mathbf{x}_i)$ second-stage optimization problems needed to be solved. Under situations with high prediction uncertainty, $N(\mathbf{x}_i)$ is required to be large so that we can accurately estimate the expected cost. Each scenario-based second-stage optimization is a black-box deterministic optimization problem and it is time-consuming to solve them separately. Also each simulation run could be computationally expensive.

		
		\item[] \textbf{Observation (3):} {When we search for the optimal first-stage decision, we need to consider errors induced by: (a) the finite sampling error introduced by SAA, (b) the second-stage optimality gap, and (c) finite visited first-stage decisions. 
		}
		
	\end{enumerate}

\vspace{-0.07in}
	
	
	\section{{Global-Local Metamodel-Assisted Two-Stage Optimization}}
	\label{sec: metamodel}
	
	{Considering the unique properties of stochastic programming with unknown response, in this section, we introduce a global-local metamodel-assisted two-stage optimization via simulation}. It 
	can efficiently employ the simulation budget, denoted by $C$, to solve the two-stage optimization problem in (\ref{eq.intro1}) for complex stochastic systems. 
	We first present the local metamodel assisted second-stage optimization in Section~\ref{sec:KrigingModel}. At each visited first-stage decision $\mathbf{x}_i\in\mathcal{X}$, we construct a GP metamodel for $q_{\mathbf{x}_i}(\mathbf{y},\pmb{\xi})$. {The} metamodel uncertainty is characterized by the GP posterior distribution, denoted by $\mathcal{GP}_{\mathbf{x}_i}$ and let $\widetilde{q}_{\mathbf{x}_i}(\cdot,\cdot)\sim \mathcal{GP}_{\mathbf{x}_i}$. In the paper, the notation $\widetilde{\cdot}$ denotes the posterior sample or random function/variable. 
	To overcome the challenges stated in Observation~(2), we utilize the local metamodel $\mathcal{GP}_{\mathbf{x}_i}$ to \textit{simultaneously} solve the $N(\mathbf{x}_i)$ second-stage optimization problems, $\min_{\mathbf{y}_{ij} \in \mathcal{Y}(\mathbf{x}_i)} {q}_{\mathbf{x}_i}( \mathbf{y}_{ij}, \pmb{\xi}_{ij})$ for $j=1,2,\ldots,N(\mathbf{x}_i)$. 
	The local metamodel also allows us to estimate the  optimality gap from the second-stage optimization.

	Then, in Section~\ref{sec:Procedure}, based on the search results for the second-stage optimization, we develop a global GP metamodel for $G(\mathbf{x})={c_0(\mathbf{x})}+\mbox{E}_{\pmb{\xi}}
	\left[
	\min_{\mathbf{y} \in \mathcal{Y}(\mathbf{x})} {q}(\mathbf{x}, \mathbf{y}, \pmb{\xi})\right]$. {It accounts for: (1) finite sampling error induced by SAA, (2) the bias caused by the second-stage optimality gap, and (3) prediction error induced by only finite decision points visited.
	It can capture the spatial dependence of the response surface $G(\cdot)$ stated in Observation~(1). 
	}

	Assisted by the global-local metamodel developed in Sections~\ref{sec:KrigingModel} and \ref{sec:Procedure}, we propose the two-stage optimization {via simulation} in Section~\ref{subsec:OptimizationProcedure}  
	that can balance exploration and exploitation through simultaneously controlling all sources of errors.
	It can efficiently employ the computational resource to \textit{iteratively} solve for the optimal first- and second-stage decisions.
	As the simulation budget goes to infinity, in Section~\ref{subsec: prove},
	we can show that the proposed optimization procedure can guarantee the global convergence to $G(\mathbf{x}^\star)$.

	\subsection{Local Metamodel Assisted Second-Stage Optimization}
	\label{sec:KrigingModel}

	At any visited design point $\mathbf{x}_i$, in this section, we introduce a local metamodel assisted algorithm for solving the second-stage simulation optimization problems. Specifically, we first construct a GP metamodel for ${q}_{\mathbf{x}_i}(\cdot, \cdot)$ in Section~\ref{subsec:Kmetamodel}. Then, 
	{for each scenario $\pmb{\xi}_{ij}$, we plug it into the local metamodel and the posterior sample path $\widetilde{q}_{\mathbf{x}_i}(\mathbf{y}, \pmb{\xi}_{ij})$ is utilized to predict the response at any $\mathbf{y}\in\mathcal{Y}(\mathbf{x}_i)$. Thus, we solve
	the scenario-based second-stage optimization problems, $\min_{\mathbf{y}_{ij} \in \mathcal{Y}(\mathbf{x}_i)} {q}_{\mathbf{x}_i}(\mathbf{y}_{ij}, \pmb{\xi}_{ij})$  and further estimate the optimality gap $\delta(\mathbf{x}_i,\pmb{\xi}_{ij})$ for $j=1,2,\ldots,N(\mathbf{x}_i)$ in Section~\ref{subsec: secondstage}. Since the local metamodel $q_{\mathbf{x}_i}(\cdot,\cdot)$ can leverage the information collected from all scenarios at $\mathbf{x}_i$, it can efficiently 
	solve the second-stage optimization problems.}

	\subsubsection{Local Metamodel Construction}
	\label{subsec:Kmetamodel}
	
	For notational simplicity, let $\mathbf{z}\equiv (\mathbf{y},\pmb{\xi})$. The cost occurring in the planning horizon can be modeled as a realization of GP,
	\begin{equation}
	{q}_{\mathbf{x}_i}(\mathbf{z})=\beta_0+ M(\mathbf{z}).
	\label{eq.functionK}   \nonumber 
	\end{equation} 	
	It consists of two parts: a global trend $\beta_0$ 
	(note that $\beta_0$ can be replaced by a more general trend term $\mathbf{f}(\mathbf{z})^\top \pmb{\beta}$ without affecting our method)
	and a zero-mean GP, denoted by $M(\mathbf{z})$, modeling the spatial dependence of the response function $q_{\mathbf{x}_i}(\mathbf{z})$. 
	The covariance function of the GP is $\mbox{Cov}(M(\mathbf{z}),M(\mathbf{z}^\prime))=\sigma^2R(\mathbf{z},\mathbf{z}^\prime)
	$,
	where $\sigma^2$ is the variance and $R(\cdot)$ is the correlation function. {Our previous study \cite{xie_nelson_staum_2010} demonstrates that  the product-form Gaussian correlation function has the good performance and also easy to implement. 
	Thus, it is used} in the empirical study
	\begin{equation}
	R(\mathbf{z}-\mathbf{z^\prime} | \pmb{\phi})=\exp \left[-\sum_{j=1}^d \phi_j(z_j-z^\prime_j)^2 \right] 
	\label{eq.GaussianCorFun}
	\end{equation}
	where $d$ is the dimension of $\mathbf{z}$ and the parameters $\pmb{\phi}=(\phi_1,\ldots,\phi_d)$ control the spatial dependence. 
	
	
	Denote $K_1$ design points at $\mathbf{x}_i$ by $\mathcal{P}_{\mathbf{x}_i} \equiv\{\bm z_1,\bm z_2,\ldots,\bm z_{K_1}\}$, and the corresponding simulation outputs by 
	$
	\mathbf{Q}_{\mathcal{P}_{\mathbf{x}_i}}=(q_{\mathbf{x}_i}(\bm z_1), q_{\mathbf{x}_i}(\bm z_2),\ldots,q_{\mathbf{x}_i}(\bm z_{K_1}))^\prime. 
	$        	
	Define $\bm Z$ as the matrix of $K_p$ prediction points.    
	Let $R(\bm Z,\cdot)$ represent the $K_1 \times K_p$ spatial correlation matrix between $K_1$ design points and $K_p$ prediction points $\bm Z$. Let $R$
	be the $K_1 \times K_1$ correlation matrix across all $K_1$ design points and let $R(\bm Z,\bm Z)$ represent the $K_p \times K_p$ spatial correlation matrix between $K_p$ prediction points .
	Then, given the simulation outputs $\mathbf{Q}_{\mathcal{P}_{\mathbf{x}_i}}$, the remaining uncertainty of the local metamodel at $\bm Z$ is characterized by the updated Gaussian process, denoted by $\mathcal{GP}_{\mathbf{x}_i}(\widehat{q}_{\mathbf{x}_i}(\bm Z), s^2_{\mathbf{x}_i}(\bm Z))$, with mean
	\begin{equation}
	\widehat{q}_{\mathbf{x}_i}(\bm Z)=\widehat{\beta}_0 \cdot  \mathbf{1}_{K_p \times 1} +R(\bm Z,\cdot)^\prime R^{-1}(\mathbf{Q}_{\mathcal{P}_{\mathbf{x}_i}}-\mathbf{1}_{K_1 \times 1}\widehat{\beta}_0),
	\label{eq.KBLUP} 
	\end{equation}
	and variance-covariance matrix	
	\begin{eqnarray}
	\label{eq.undervar} 
	\lefteqn{
		s^2_{\mathbf{x}_i}(\bm Z ,\bm Z ) = \sigma^2  \{ R(\bm Z,\bm Z)-R(\bm Z,\cdot)^\prime R^{-1} R(\bm Z,\cdot) } \\
	&& + [\mathbf{1}_{1\times K_p}-\mathbf{1}_{K_1 \times 1}^\prime R^{-1}R(\bm Z,\cdot)]^\prime [\mathbf{1}_{K_1\times 1}^\prime R^{-1}
	\mathbf{1}_{K_1 \times 1}]^{-1} 
	[\mathbf{1}_{1\times K_p}-\mathbf{1}_{K_1\times 1}^\prime R^{-1} R(\bm Z,\cdot) ] \},
	\nonumber
	\end{eqnarray}	
	where  $\widehat{\beta}_0=(\mathbf{1}^\prime_{K_1 \times 1} R^{-1} \mathbf{1}_{K_1 \times 1} )^{-1}  \mathbf{1}^\prime_{K_1 \times 1}  R^{-1}\mathbf{Q}_{\mathcal{P}_{\mathbf{x}_i}}$. 
	The unknown parameters $\sigma$ and $\pmb{\phi}$ are estimated by the maximum likelihood approach; see Sacks et al.~\citep{Sacks_Welch_Mitchell_Wynn_1989} and Jones  et al.~\citep{Jones_1998}. 

	\subsubsection{Second-Stage Optimization and Optimality Gap Estimation}
	\label{subsec: secondstage}

	\begin{sloppypar}
		At any visited first-stage candidate $\mathbf{x}_i\in\mathcal{X}$, when SAA is used to estimate the expected future cost, it could be computationally expensive to solve $N(\mathbf{x}_i)$ black-box second-stage optimization problems separately. 
		{Assisted by the local metamodel built for $q_{\mathbf{x}_i}(\cdot,\cdot)$ in Section~\ref{subsec:Kmetamodel}, we can simultaneously solve \textit{all} $N(\mathbf{x}_i)$ second-stage optimization problems:  $\mathbf{y}_{ij}^\star = \arg\min_{\mathbf{y} \in \mathcal{Y}(\mathbf{x}_i)} {q}_{\mathbf{x}_i}(\mathbf{y}, \pmb{\xi}_{ij})$ with $j=1,2,\ldots,N(\mathbf{x}_i)$.}
		Specifically, by plugging in each scenario $\pmb{\xi}_{ij}$, 
		the metamodel 
	$\mathcal{GP}_{\mathbf{x}_i}(\widehat{q}_{\mathbf{x}_i}(\cdot,\pmb{\xi}_{ij}),s_{\mathbf{x}_i}^2(\cdot,\pmb{\xi}_{ij}))$ provides the posterior prediction of the system response for any untried decision $\mathbf{y}\in \mathcal{Y}(\mathbf{x}_i)$, which is used to guide the search for $\mathbf{y}_{ij}^\star$. 
	Given the current estimated optimal solution, denoted by $\widehat{\mathbf{y}}^\star_{ij}$, we want to efficiently employ the simulation resource to reduce {the optimality gap}, 
	$
	\delta(\mathbf{x}_i,\pmb{\xi}_{ij})= q_{\mathbf{x}_i}(\widehat{\mathbf{y}}^\star_{ij},\pmb{\xi}_{ij})
	-q_{\mathbf{x}_i}(\mathbf{y}^\star_{ij},\pmb{\xi}_{ij}).
	\label{eq.delta1}
$
	Thus, in the next search iteration, we want to find the promising point $\mathbf{y}_{ij}\in \mathcal{Y}(\mathbf{x}_i)$ that can reduce the optimal gap the most and run simulation there.
		\end{sloppypar}

	Since the response surface $q_{\mathbf{x}_i}(\cdot,\pmb{\xi}_{ij})$ is unknown, motivated by the Efficient Global Optimization (EGO) algorithm \citep{Jones_1998}, the criteria used to guide the sequential search for the optimal solution $\mathbf{y}_{ij}^\star$ is based on minimizing the expected optimality gap or maximizing the \textit{expected improvement (EI)}. 
	{The unknown second-stage response surface is modeled by the posterior sample path $\widetilde{q}_{\mathbf{x}_i}(\cdot,\cdot)\sim
		\mathcal{GP}_{\mathbf{x}_i}(\widehat{q}_{\mathbf{x}_i}(\cdot,\cdot),s_{\mathbf{x}_i}^2(\cdot,\cdot))$. 
	Following \citep{Jones_1998}, given the current optimal solution $\widehat{\mathbf{y}}^\star_{ij}$, the EI at any untried point $\mathbf{y}_{ij}\in \mathcal{Y}(\mathbf{x}_i)$ can be defined as
	\begin{eqnarray} 
	\lefteqn{ \mbox{E}_{\widetilde{q}_{\mathbf{x}_i}(\mathbf{y}_{ij},\pmb{\xi}_{ij})}\left[\mathrm{I}(\mathbf{y}_{ij}, \pmb{\xi}_{ij})\right] \equiv	\mathrm{E}_{\widetilde{q}_{\mathbf{x}_i}(\mathbf{y}_{ij},\pmb{\xi}_{ij})}\left[ \max \left(
{q}_{\mathbf{x}_i}(\widehat{\mathbf{y}}^\star_{ij},\pmb{\xi}_{ij})-\widetilde{q}_{\mathbf{x}_i}(\mathbf{y}_{ij},\pmb{\xi}_{ij})
	,0\right)\right] } \nonumber \\
&=&
\Delta \cdot \Phi\left(\frac{\Delta}{s_{\mathbf{x}_i}(\mathbf{y}_{ij},\pmb{\xi}_{ij})} \right)+ 
s_{\mathbf{x}_i}(\mathbf{y}_{ij},\pmb{\xi}_{ij})\cdot \varphi\left(\frac{\Delta}{s_{\mathbf{x}_i}(\mathbf{y}_{ij},\pmb{\xi}_{ij})}\right) 
		    	\label{eq. formEI}
		\end{eqnarray} 
	where $\mbox{E}_{\widetilde{q}_{\mathbf{x}_i}(\mathbf{y}_{ij},\pmb{\xi}_{ij})}\left[ \cdot \right]$ denotes the expectation over the remaining metamodel uncertainty at $(\mathbf{y}_{ij},\pmb{\xi}_{ij})$ with
	$\widetilde{q}_{\mathbf{x}_i}(\mathbf{y}_{ij},\pmb{\xi}_{ij})\sim
		N(\widehat{q}_{\mathbf{x}_i}(\mathbf{y}_{ij},\pmb{\xi}_{ij}),s_{\mathbf{x}_i}^2(\mathbf{y}_{ij},\pmb{\xi}_{ij}))$. Here, we set $\Delta\equiv q_{\mathbf{x}_i}(\widehat{\mathbf{y}}^\star_{ij},\pmb{\xi}_{ij})-\widehat{q}_{\mathbf{x}_i}(\mathbf{y}_{ij}, \pmb{\xi}_{ij})$,
	 and represent the PDF, CDF of standard normal distribution with $\varphi(\cdot)$, $\Phi(\cdot)$ respectively.} The greater $\mbox{E}_{\widetilde{q}_{\mathbf{x}_i}(\mathbf{y}_{ij},\pmb{\xi}_{ij})}\left[\mathrm{I}(\mathbf{y}_{ij}, \pmb{\xi}_{ij})\right]$ is, the more promising the untried point $\mathbf{y}_{ij}$ {could be}. Hence, to efficiently search for the true optimal $\mathbf{y}_{ij}^\star$, we find the point which gives the maximum EI, 
	\begin{equation}
	\mathbf{y}_{ij}^{EI}={\arg
	\max}_{\mathbf{y}_{ij} \in \mathcal{Y}(\mathbf{x}_i )} \mbox{E}_{\widetilde{q}_{\mathbf{x}_i}(\mathbf{y}_{ij},\pmb{\xi}_{ij})}\left[\mathrm{I}(\mathbf{y}_{ij}, \pmb{\xi}_{ij})\right]
	\label{eq: EIsearch}
	\end{equation}
		and then run simulation there; see the  second-stage optimization search procedure in Section~\ref{subsec:OptimizationProcedure}.

	{In addition,
	the local metamodel can be used to esimate the second-stage optimality gap,
	$
	\delta(\mathbf{x}_i,\pmb{\xi}_{ij}) = q_{\mathbf{x}_i}(\widehat{\mathbf{y}}^\star_{ij}, \pmb{\xi}_{ij})-q_{\mathbf{x}_i}(\mathbf{y}^\star_{ij}, \pmb{\xi}_{ij})
	$ for $j=1,2,\ldots,N(\mathbf{x}_i)$.
	Specifically, the unknown response surface $q_{\mathbf{x}_i}(\cdot,\cdot)$ is modeled by a sample path of Gaussian Process $\mathcal{GP}_{{\mathbf{x}_i}}$. For each scenario $\pmb{\xi}_{ij}$, the local metamodel $\mathcal{GP}_{{\mathbf{x}_i}}$ can be used to estimate the expected optimality gap $\mbox{E}[\delta(\mathbf{x}_i,\pmb{\xi}_{ij})|\pmb{\xi}_{ij}]$, where the conditional expectation is over the metmodel uncertainty. \textit{Basically, as the metamodel uncertainty increases especially at the promising area, the expected optimality gap also increases.}
To estimate $\mbox{E}[\delta(\mathbf{x}_i,\pmb{\xi}_{ij})|\pmb{\xi}_{ij}]$, we first generate a posterior sample path $\widetilde{q}_{\mathbf{x}_i}^{(b)}(\cdot,\cdot)\sim \mathcal{GP}_{\mathbf{x}_i}(\widehat{q}_{\mathbf{x}_i}(\cdot, \cdot),s_{\mathbf{x}_i}^2(\cdot,\cdot))$
for $b=1,\ldots,B$, where $\widehat{q}_{\mathbf{x}_i}$ and $s_{\mathbf{x}_i}^2$ are specified by (\ref{eq.KBLUP}) and (\ref{eq.undervar}). Then, we can use $\widetilde{q}_{\mathbf{x}_i}^{(b)}(\mathbf{y},\pmb{\xi}_{ij})$ to predict the response for any $\mathbf{y}\in \mathcal{Y}(\mathbf{x}_i)$, and calculate the $b$-th realization of the optimality gap,
	\[
	\delta^{(b)}(\mathbf{x}_i,\pmb{\xi}_{ij}) = \max\left(0, {q}_{\mathbf{x}_i}(\widehat{\mathbf{y}}^\star_{ij}, \pmb{\xi}_{ij})- \widehat{q}^\star_{ijb} \right) 
	\mbox{ with } \widehat{q}^\star_{ijb} = \min_{\mathbf{y}\in \mathcal{Y}(\mathbf{x}_i)} \widetilde{q}^{(b)}_{\mathbf{x}_i}(\mathbf{y},\pmb{\xi}_{ij}).
	\] 
	Thus, we can estimate $\mbox{E} [\delta(\mathbf{x}_i,\pmb{\xi}_{ij}) | \pmb{\xi}_{ij}]$ with 
	\begin{equation}
	\widehat{\mbox{E}} [\delta(\mathbf{x}_i,\pmb{\xi}_{ij}) | \pmb{\xi}_{ij}]= \frac{1}{B} 
	\sum_{b=1}^{B} \delta^{(b)}(\mathbf{x}_i,\pmb{\xi}_{ij})
	\label{eq: meanog}.
	\end{equation}
	}

	
	\subsection{Global Metamodel Development}
	\label{sec:Procedure}

	{
	Based on Observation~(1) described in Section~\ref{sec:statement}, suppose that the unknown response surface $G(\mathbf{x})={c_0(\mathbf{x})}+\mbox{E}_{\pmb{\xi}}
	\left[
	\min_{\mathbf{y} \in \mathcal{Y}(\mathbf{x})} {q}_{\mathbf{x}}(\mathbf{y}, \pmb{\xi})\right] 
	$ is a realization of GP. Given the results from the second-stage optimization, we develop a global GP metamodel for $G(\mathbf{x})$, which will be used to guide the search for the optimal decision $\mathbf{x}^\star$ in Section~\ref{subsec:OptimizationProcedure}.}

	\begin{sloppypar}
	For any $\mathbf{x}_i\in \mathcal{X}$, the optimal response from the $j$-th scenario  
	${G}_j(\mathbf{x}_i) \equiv c_0(\mathbf{x}_i)+\min_{\mathbf{y}_{ij} \in \mathcal{Y}(\mathbf{x}_i)} {q}_{\mathbf{x}_i}(\mathbf{y}_{ij}, \pmb{\xi}_{ij}) $ 
	can be written as,
	\begin{equation}
	{G}_j(\mathbf{x}_i)=G(\mathbf{x})+\epsilon(\mathbf{x}_i,\pmb{\xi}_{ij}),
	\label{eq: skdef} 
	\end{equation}
 where the mean zero random error $\epsilon(\mathbf{x}_i,\pmb{\xi}_{ij})$ represents the deviation of $G_j(\mathbf{x}_i)$ from the expected cost $G(\mathbf{x}_i)=\mbox{E}[G_j(\mathbf{x}_i)]$.
However, the optimal solution for the second-stage optimization, 
	$\mathbf{y}^{\star}_{ij}=\arg\min_{\mathbf{y}_{ij}\in \mathcal{Y}(\mathbf{x}_i)} q_{\mathbf{x}_i}(\mathbf{y}_{ij},\pmb{\xi}_{ij})$, is unknown. Since only the current optimal estimate $\widehat{\mathbf{y}}^\star_{ij}$ is available, we can observe $\breve{G}_j(\mathbf{x}_i)\equiv c_0(\mathbf{x}_i)+q_{\mathbf{x}_i}(\widehat{\mathbf{y}}^\star_{ij},\pmb{\xi}_{ij})$. Thus, we rewrite the optimal response from the $j$-th scenario as $
	{G}_j(\mathbf{x}_i) = \breve{G}_j(\mathbf{x}_i) - \delta(\mathbf{x}_i,\pmb{\xi}_{ij})
	$
	with the optimality gap, $\delta(\mathbf{x}_i,\pmb{\xi}_{ij})=q_{\mathbf{x}_i}(\widehat{\mathbf{y}}^\star_{ij},\pmb{\xi}_{ij})
	-q_{\mathbf{x}_i}(\mathbf{y}^\star_{ij},\pmb{\xi}_{ij})$.
	Combining with (\ref{eq: skdef}), we have
	\begin{equation}
	\breve{G}_j(\mathbf{x}_i)-\delta(\mathbf{x}_i,\pmb{\xi}_{ij})=G(\mathbf{x}_i)+\epsilon(\mathbf{x}_i,\pmb{\xi}_{ij}).
	\label{eq: unbiased}
	\end{equation}
	As noted in Section~\ref{subsec: secondstage}, given any scenario $\pmb{\xi}_{ij}$, since $q_{\mathbf{x}_i}(\cdot,\cdot)$ has the remaining metamodel uncertainty, our belief on unknown optimality gap $\delta(\mathbf{x}_i,\pmb{\xi}_{ij})$ is treated as a random variable with conditional expectation $\mbox{E} [\delta(\mathbf{x}_i,\pmb{\xi}_{ij})|\pmb{\xi}_{ij}]$. 
	Thus, $\delta(\mathbf{x}_i,\pmb{\xi}_{ij})$ can be expressed as 
	\begin{equation}
	\delta(\mathbf{x}_i,\pmb{\xi}_{ij})=\mbox{E} [\delta(\mathbf{x}_i,\pmb{\xi}_{ij})|\pmb{\xi}_{ij}]+\epsilon^\prime(\mathbf{x}_i, \pmb{\xi}_{ij}),
	\label{eq: defop} 
	\end{equation}
	where $\epsilon^\prime(\mathbf{x}_i, \pmb{\xi}_{ij})$ is defined as a zero-mean random variable characterizing the variability of $\delta(\mathbf{x}_i,\pmb{\xi}_{ij})$ given $\pmb{\xi}_{ij}$ and it is induced by the local metamodel uncertainty of $q_{\mathbf{x}_i}(\cdot,\pmb{\xi}_{ij})$. 
	By plugging (\ref{eq: defop}) into (\ref{eq: unbiased}) and then rearranging terms, we obtain 
	\begin{equation}
	\breve{G}_j(\mathbf{x}_i)-\mbox{E}[\delta(\mathbf{x}_i,\pmb{\xi}_{ij})|\pmb{\xi}_{ij}]=G(\mathbf{x}_i)+\epsilon(\mathbf{x}_i,\pmb{\xi}_{ij})+
	\epsilon^\prime (\mathbf{x}_i, \pmb{\xi}_{ij}).
	\label{eq: skBias} 
	\end{equation}
	\end{sloppypar}
	
	\begin{sloppypar}
	{
	At each $\mathbf{x}_i$, given $N(\mathbf{x}_i)$ scenario results from solving the second-stage optimization problems, we obtain the expected optimality gap adjusted SAA estimate for $G(\mathbf{x}_i)$,
	\begin{eqnarray}
	\label{eq.Gbar}
	\lefteqn{
		\bar{G}(\mathbf{x}_i)= \frac{1}{N(\mathbf{x}_i)} \sum_{j=1}^{N(\mathbf{x}_i)} \{ \breve{G}_j(\mathbf{x})-\mbox{E}[\delta(\mathbf{x}_i,\pmb{\xi}_{ij})|\pmb{\xi}_{ij}] \}
	} \nonumber \\
	&= c_0(\mathbf{x}_i)+\frac{1}{N(\mathbf{x}_i)}
	\sum_{j=1}^{N(\mathbf{x}_i)} \left\{
	q(\mathbf{x}_i,\widehat{\mathbf{y}}^\star_{ij},\pmb{\xi}_{ij}) -  \mbox{E}[\delta(\mathbf{x}_i,\pmb{\xi}_{ij})|\pmb{\xi}_{ij}] \right\}
	\label{eq.mid101}
	\end{eqnarray}
	with $\pmb{\xi}_{ij}\stackrel{i.i.d.}\sim F(\pmb{\xi})$ for $i=1,2,\ldots,K$ and $j=1,2,\ldots,N(\mathbf{x}_i)$.
	Let $\bar{e}(\mathbf{x}_i,\mathcal{D}_i)=\frac{1}{N(\mathbf{x}_i)}\sum_{j=1}^{N(\mathbf{x}_i)}[\epsilon(\mathbf{x},\pmb{\xi}_{ij})+\epsilon^\prime(\mathbf{x}_i,\pmb{\xi}_{ij})]$, where $\mathcal{D}_i=\{\pmb{\xi}_{ij}: j=1,\ldots,N(\mathbf{x}_i) \}$ represents the finite scenarios set at $\mathbf{x}_i$.
	Then, from Equation~(\ref{eq: skBias}), we get
	$
    \bar{G}(\mathbf{x}_i) = G(\mathbf{x}_i) + 
    \bar{e}(\mathbf{x}_i,\mathcal{D}_i).$
    }

	{Suppose the unknown response surface $G(\mathbf{x})$ is a random sample path of a GP, denoted by $\mathcal{W}(\mathbf{x})=\mu_0+W(\mathbf{x})$, characterizing the uncertainty of our belief on the underlying $G(\mathbf{x})$.
	Thus, at any $\mathbf{x}_i\in\mathcal{X}$, we can model the summary simulation output $\bar{G}(\mathbf{x}_i)$ with
	\begin{equation}
	    \bar{G}(\mathbf{x}_i) = [\mu_0+W(\mathbf{x}_i)] + \bar{e}(\mathbf{x}_i,\mathcal{D}_i).
	    \label{eq.barG_GP}
	\end{equation}
Since the mean zero error $\bar{e}(\mathbf{x}_i,\mathcal{D}_i)$ considers the aggregated impact from many factors or results from the $N(\mathbf{x}_i)$ scenarios, we can assume that it follows the normal distribution by applying the general central limit theory (CLT). Note that $\mu_0$ in (\ref{eq.barG_GP}) can be replaced by a more general trend term, i.e., $\mathbf{f}(\mathbf{x}_i)^\top \pmb{\mu}$. 
	}
	
   {
	Denote the design points with
	$\mathcal{P}_o\equiv\{\mathbf{x}_1,\mathbf{x}_2,\ldots,\mathbf{x}_{K}\}$.
	Let 
	$\bar{\mathbf{G}}_{\mathcal{P}_o}=(\bar{G}(\mathbf{x}_1),\bar{G}(\mathbf{x}_2),\ldots,\bar{G}(\mathbf{x}_{K}))^\prime$.
    The bootstrap can be used to quantify the estimation variance of $\bar{G}(\mathbf{x}_i)$ for $i=1,\ldots,K$. Specifically, in each $t$-th iteration, we draw with replacement
$N(\mathbf{x}_i)$ outputs from  $\{\breve{G}_j(\mathbf{x})-\mbox{E}[\delta(\mathbf{x}_i,\pmb{\xi}_{ij})|\pmb{\xi}_{ij}] \mbox{  with }j=1,\ldots, N(\mathbf{x}_i)\}$ and calculate the sample mean, denoted by $\bar{G}^{(t)}(\mathbf{x}_i)$. Repeat this procedure for $T$ times. 
We can estimate the variance of $\bar{G}(\mathbf{x}_i)$, denoted by $V_{ii}$, by using the sample variance from these $T$ bootstrap samples $\bar{G}^{(1)},\ldots,\bar{G}^{(T)}.$ 
For unknown $\mbox{E}[\delta(\mathbf{x}_i,\pmb{\xi}_{ij})|\pmb{\xi}_{ij}]$, we plug in the estimate $\widehat{\mbox{E}}[\delta(\mathbf{x}_i,\pmb{\xi}_{ij})|\pmb{\xi}_{ij}]$ obtained from (\ref{eq: meanog}).
Then, without CRN, the covariance matrix of $\bar{\mathbf{G}}_{\mathcal{P}_o}$ is a diagonal matrix $V$ with the $i$-th diagonal term equal to $V_{ii}$. Given $\bar{\mathbf{G}}_{\mathcal{P}_o}$ and $V$, we can construct the global metmaodel to guide the search for $\mathbf{x}^\star$.
}

	Represent the spatial covariance function as $\mbox{Cov}(W(\mathbf{x}),W(\mathbf{x}^\prime))=\tau^2r(\mathbf{x},\mathbf{x}^\prime)
	$, where $\tau^2$ denotes the variance and 
	$r(\mathbf{x}-\mathbf{x^\prime}; \pmb{\phi})$ denotes the correlation function with parameters $\pmb{\phi}$. The Gaussian correlation function is used in our empirical study.
	For any prediction point $\bm X_\star\in\mathcal{X}$, denote the spatial covariance vector between $\bm X_\star$ and design points by 
	$\Sigma(\bm X_\star,\cdot)$, and denote the variance-covariance matrix between design points by $\Sigma$. 
	Given the simulation outputs, the remaining uncertainty of the cost function  $G(\bm X_\star)$ can be characterized by a GP, denoted by $\widetilde{G}(\bm X_\star) \sim \mathcal{GP}_o(\widehat{G}(\bm X_\star), s^2(\bm X_\star ))$, with mean 
	\begin{equation}
	\widehat{G}(\bm X_\star)=\widehat{\mu}_0
	+ \Sigma(\bm X_\star,\cdot)^\prime [\Sigma+V]^{-1}
	(\bar{\mathbf{G}}_{\mathcal{P}_o}-\widehat{\mu}_0\cdot \mathbf{1})
	\label{eq.predictor}
	\end{equation}
	and variance
	\begin{equation}
	\begin{aligned}
	s^2(\bm X_\star ) = \tau^2 - \Sigma(\bm X_\star,\cdot)^\prime [\Sigma+V]^{-1} \Sigma(\bm X_\star,\cdot)
	+\mathbf{\eta}^\prime [\mathbf{1}^\prime(\Sigma+V)^{-1}
	\mathbf{1}]^{-1}\mathbf{\eta}  
	\label{eq.MSE}
	\end{aligned}
	\end{equation}
	where $\widehat{\mu}_0=[\mathbf{1}^\prime (\Sigma+V)^{-1}
	\mathbf{1}]^{-1}\mathbf{1}^\prime (\Sigma+V)^{-1}\bar{\mathbf{G}}_{\mathcal{P}_o}$ and $\mathbf{\eta}=1-\mathbf{1}^\prime
	(\Sigma+V)^{-1} \Sigma(\bm X_\star,\cdot)$; see the detailed information about the stochastic kriging in Ankenman et al.~\citep{ankenman_nelson_staum_2010}. The maximum likelihood estimates (MLEs) for $\tau^2$ and $\pmb{\phi}$ are used for prediction.
	\end{sloppypar}

	
	\subsection{Global-Local Metamodel Assisted Two-Stage Optimization via Simulation}
	\label{subsec:OptimizationProcedure}

	{Built on the global-local metamodel developed in Sections~\ref{sec:KrigingModel} and \ref{sec:Procedure}, we propose a two-stage optimization via simulation. Notice that before using either local or global metamodel for the optimal search, we need to verify the GP assumption; See Jones et al.~\citep{Jones_1998} and Huang et al.~\citep{Huang_etal_2006G} for the detailed discussion. The procedure of proposed global-local metamodel assisted two-stage OvS includes the main steps described in Algorithm~\ref{Global}.} In Step~(1), we specify the simulation budget $C$, the initial relative EI threshold $\alpha_0$, the growth factor $g$, the number of replications $n_0$, and set the iteration index $k=1$. Denote $D_{\mathbf{x}}^{(k)}$ as the set of first-stage design points visited at the $k$-th iteration and denote $\mathcal{S}_{\mathbf{x}}^{(k)}$ as all design points that have been visited until the $k$-th iteration,  
	$
	\mathcal{S}_{\mathbf{x}}^{(k)} =\bigcup_{s=1}^{k} D_{\mathbf{x}}^{(s)}.
	$
	We generate the initial set of design points for the global GP metamodel $D_{\mathbf{x}}^{(k)}$ with $k=1$ by using the maximin Latin Hypercube Design (LHD)  \citep{Jones_1998, Huang_etal_2006G}.  In the empirical study, we use the ``$10d$" rule for the number of initial design points \citep{Huang_etal_2006G}.
	For each $\mathbf{x}_i \in D_{\mathbf{x}}^{(k)}$, we allocate $n_0$ scenarios. Following Kim and Nelson~\citep{Kim_Nelson_2007} and Tsai et al.~\citep{Tsai_Barry_Staum_2009}, we set $n_0=10$ in the empirical study. 
	\begin{algorithm}
		\textbf{Step~(1):} Initialization.  
		
		Step (1.1) Specify the simulation budget $C$ and the growth factor $g$. Initialize the iteration index $k=1$, the EI threshold $\alpha_0$ and the number of replications $n_0$. 
		
		Step (1.2) Use the maximin LHD to generate the {initial }first-stage design point set $D_{\mathbf{x}}^{(k)}$ evenly covering the decision space $\mathcal{X}$ and set $\mathcal{S}_{\mathbf{x}}^{(k)}=D_{\mathbf{x}}^{(k)}$. Let $N^{(k)}(\mathbf{x}_i)=n_0$ for $\mathbf{x}_i \in D_{\mathbf{x}}^{(k)}$.
		
		\vspace{0.1in}
		\textbf{{Step~(2):}} Solve the second-stage optimization {problems }for design points {visited at the $k$-th iteration,} $\mathbf{x}_i \in {D_{\mathbf{x}}^{(k)}}$. {Reset the 
		collection} of finished second-stage scenarios, $\mathcal{U}_{\mathbf{x}_i} = \emptyset$. 
		
		Step~(2.1): Update the relative EI threshold $\alpha^{(k)}(\mathbf{x}_i)$ and the number of scenarios $N^{(k)}(\mathbf{x}_i)$ for $\mathbf{x}_i\in D_{\mathbf{x}}^{(k)}$. 
		
		\eIf{ $\mathbf{x}_i \notin  \mathcal{S}_{\mathbf{x}}^{(k-1)}$ {(first visited design points)}} {
			
			Set $\alpha^{(k)}(\mathbf{x}_i)=\alpha_0$ {and $N^{(k)}(\mathbf{x}_i)=n_0$. Generate new} scenarios $\{\pmb{\xi}_{ij}\}_{j=1}^{N(\mathbf{x}_i)}$. 
			Use LHD to generate the second-stage design point set $\mathcal{P}_{\mathbf{x}_i}$ evenly covering the space of $(\mathbf{y},\pmb{\xi})$, denoted by
			$\mathcal{Y}(\mathbf{x}_i)\times \Xi$,
			and run simulations there. Construct the initial local metamodel {$\mathcal{GP}_{\mathbf{x}_i}(\widehat{q}_{\mathbf{x}_i}(\cdot,\cdot),s_{\mathbf{x}_i}^2(\cdot,\cdot))$} by using (\ref{eq.KBLUP}) and (\ref{eq.undervar}).
		}
		{
			
			Set $\alpha^{(k)}(\mathbf{x}_i)=\alpha^{(k-1)}(\mathbf{x}_i)\cdot g^{-\frac{1}{2}}$. Generate  $N^{(k-1)}(\mathbf{x}_i)(g-1)$ new scenarios and let $N^{(k)}(\mathbf{x}_i)=N^{(k-1)}(\mathbf{x}_i)\cdot g$. 
		}

		Step~(2.2): {For each new generated scenario $\pmb{\xi}_{ij}$, use the local metamodel  $\mathcal{GP}_{\mathbf{x}_i}$ to search  $\widehat{\mathbf{y}}_{ij}^\star$ by using (\ref{eq: initialopt}).} 
		
		Step~(2.3): {Check the stopping criteria (\ref{eq.condAlpha}) for each scenario $\pmb{\xi}_{ij}$ with $j=1,\ldots,N^{(k)}(\mathbf{x}_i)$ and move the terminated ones to the set $\mathcal{U}_{\mathbf{x}_i}$. If $ |\mathcal{U}_{\mathbf{x}_i} | =  N^{(k)}(\mathbf{x}_i)$, stop the second-stage optimization for $\mathbf{x}_i$. Otherwise, continue the optimal search for each remaining scenario $\pmb{\xi}_{ij} \notin \mathcal{U}_{\mathbf{x}_i}$.}
		
		\While {$ |\mathcal{U}_{\mathbf{x}_i} | < N^{(k)}(\mathbf{x}_i)$}	
		{
			\For {$1 \leq \ j \leq N^{(k)}(\mathbf{x}_i) \mbox{ and } \ \pmb{\xi}_{ij} \notin \mathcal{U}_{\mathbf{x}_i}$}{
				
				(a) Find $\mathbf{y}_{ij}^{EI}$ by using (\ref{eq: EIsearch2}).  
				
				(b) If $\frac{  \mbox{E}_{\widetilde{q}_{\mathbf{x}_i}(\mathbf{y}_{ij},\pmb{\xi}_{ij})}\left[\mathrm{I}(\mathbf{y}_{ij}^{EI}, \pmb{\xi}_{ij})\right] }
				{ |     q_{\mathbf{x}_i}(\widehat{\mathbf{y}}^\star_{ij},\pmb{\xi}_{ij})            |} \leq \alpha^{(k)}(\mathbf{x}_i) $, then
			   {terminate the optimal search for scenario $\pmb{\xi}_{ij}$ and set} $\mathcal{U}_{\mathbf{x}_i}=\mathcal{U}_{\mathbf{x}_i} \bigcup \{ \pmb{\xi}_{ij}\} $. Otherwise, 
				run simulation at $(\mathbf{y}_{ij}^{EI}, \pmb{\xi}_{ij})$ and add it to $\mathcal{P}_{\mathbf{x}_i}$. {Update the local metamodel for ${q}_{\mathbf{x}_i}(\cdot, \cdot)$ based on $\mathcal{P}_{\mathbf{x}_i}$ by using (\ref{eq.KBLUP})--(\ref{eq.undervar})} and update $(\widehat{\mathbf{y}}^\star_{ij}, \pmb{\xi}_{ij})$.}
			}
			{(c) {Estimate  ${\mbox{E}} [\delta(\mathbf{x}_i,\pmb{\xi}_{ij}) | \pmb{\xi}_{ij}]$ by (\ref{eq: meanog}).}
		}
		
		\vspace{0.1in}
		\textbf{Step~(3):}  Solve the first-stage optimization.
		
		
		Step~(3.1): Construct/Update the global metamodel $\mathcal{GP}_o(\widehat{G}(\cdot),s^2(\cdot))$ by using (\ref{eq.predictor}) and (\ref{eq.MSE}). Find the current optimal decision, {$\widehat{\mathbf{x}}^{\star(k)} =\arg \min_{\mathbf{x} \in \mathcal{S}_{\mathbf{x}}^{(k)}}\bar{G}(\mathbf{x})$}.
		
		{Step~(3.2): If the simulation budget $C$ is exhausted, stop the search procedure and report $\widehat{\mathbf{x}}^{\star(k)}$. Otherwise, 
		construct the sampling distribution $f^{(k+1)}(\mathbf{x})$ for $\mathbf{x}\in \mathcal{X}$ by (\ref{eq: sampling}) and use it to generate a new set of design points $D_{\mathbf{x}}^{(k+1)}$ with {size} $|D_{\mathbf{x}}^{(k+1)}|=s$. {Update the set of design points $\mathcal{S}_{\mathbf{x}}^{(k+1)}=\mathcal{S}_{\mathbf{x}}^{(k)}\bigcup D_{\mathbf{x}}^{(k+1)}$. Set the iteration index $k=k+1$ and return to Step~(2)} }

		\caption{{The Procedure for Global-Local Metamodel-Assisted Two-Stage {OvS}}.}
		\label{Global}
	\end{algorithm}

	Then, we iteratively solve the second- and first-stage optimization. At the $k$-th iteration, we solve the second-stage {optimization problems $\mathbf{y}_{ij}^\star = \arg\min_{\mathbf{y} \in \mathcal{Y}(\mathbf{x}_i)} {q}_{\mathbf{x}_i}(\mathbf{y}, \pmb{\xi}_{ij})$ with $j=1,2,\ldots,N^{(k)}(\mathbf{x}_i)$ for} each $\mathbf{x}_i \in D_{\mathbf{x}}^{(k)}$ in Step~(2). Here, $\alpha^{(k)}(\mathbf{x}_i) $ is the relative EI threshold which controls the stopping criteria for second-stage {optimal search}.
	For any design point that is visited for the first-time,  $\mathbf{x}_i \in D_{\mathbf{x}}^{(k)}$ and $\mathbf{x}_i\notin \mathcal{S}_{\mathbf{x}}^{(k-1)}$, we set  $\alpha^{(k)}(\mathbf{x}_i)=\alpha_0$ and generate $N^{(k)}(\mathbf{x}_i)=n_0$ scenarios. Since too large $\alpha_0$ could lead to unreliable optimization results and too small $\alpha_0$ could cause wasting the effort on unpromising candidates, we set $\alpha_0 =0.1$ in the empirical study. 
	Then, we use LHD to generate the design points of $(\mathbf{y},\pmb{\xi})$ evenly covering the space 
	$\mathcal{Y}(\mathbf{x}_i)\times \Xi$, run simulations, and construct the initial local metamodel  {$\mathcal{GP}_{\mathbf{x}_i}(\widehat{q}_{\mathbf{x}_i}(\cdot,\cdot),s_{\mathbf{x}_i}^2(\cdot,\cdot))$} by using (\ref{eq.KBLUP}) and (\ref{eq.undervar}).
	For each revisited design point, i.e., {$\mathbf{x}_i \in\{ D_{\mathbf{x}}^{(k)} \cap  \mathcal{S}_{\mathbf{x}}^{(k-1)}$\}}, we increase the number of scenarios $N^{(k)}(\mathbf{x}_i)$ and reduce the threshold $\alpha^{(k)}(\mathbf{x})$ to simultaneously control {the impact from finite sampling error and second-stage optimality gap}. 
	Inspired by Lesnevski et al.~\citep{Lesnevski_Nelson_Staum_2007,Lesnevski_Nelson_Staum_2008}, for the $t$-th visited first-stage action $\mathbf{x}\in\mathcal{X}$ with $t>1$, the relative EI threshold for second-stage optimization is set to be $\alpha_0  g^{-\frac{t-1}{2}}$ and the accumulated number of scenarios is set to be $\ceil{n_0g^{t-1}}$.
	Thus, for the design point $\mathbf{x}$ revisited in the $k$-th iteration, we add $N^{(k-1)}(\mathbf{x})(g-1)$ additional scenarios.  Following Lesnevski et al.~\citep{Lesnevski_Nelson_Staum_2007}, we set $g=1.5$.

	In Step~(2.2), we search the optimal solutions for those new generated scenarios $\pmb{\xi}_{ij}$ by using the local metamodel,
	\begin{equation}
	\widehat{\mathbf{y}}_{ij}^\star={\arg
	\min}_{\mathbf{y}_{ij} \in \mathcal{Y}(\mathbf{x}_i )} \widehat{q}_{\mathbf{x}_i}(\mathbf{y}_{ij},\pmb{\xi}_{ij}).
	\label{eq: initialopt}
	\end{equation}
	{In Step~(2.3), we first check the stopping criteria for each scenario $\pmb{\xi}_{ij}$ with $j=1,\ldots,N^{(k)}(\mathbf{x}_i)$,
	\begin{equation}
	\frac{  \max_{\mathbf{y}_{ij} \in \mathcal{Y}(\mathbf{x}_i )} \mbox{E}_{\widetilde{q}_{\mathbf{x}_i}(\mathbf{y}_{ij},\pmb{\xi}_{ij})}\left[\mathrm{I}(\mathbf{y}_{ij}, \pmb{\xi}_{ij})\right] }
	{ |     q_{\mathbf{x}_i}(\widehat{\mathbf{y}}^\star_{ij},\pmb{\xi}_{ij})            |}  \leq \alpha^{(k)}(\mathbf{x}_i) 
	\label{eq.condAlpha}
	\end{equation} 
	with $\mbox{E}_{\widetilde{q}_{\mathbf{x}_i}(\mathbf{y}_{ij},\pmb{\xi}_{ij})}\left[\mathrm{I}(\mathbf{y}_{ij}, \pmb{\xi}_{ij})\right]$ obtained by (\ref{eq. formEI}). We move those scenarios satisfying the stopping criteria (\ref{eq.condAlpha}) to the set $\mathcal{U}_{\mathbf{x}_i}$ that inlcudes all terminated scenarios. For the remaining scenario $\pmb{\xi}_{ij}$ with $\pmb{\xi}_{ij} \notin \mathcal{U}_{\mathbf{x}_i}$, to efficiently reduce the optimality gap, we find the point $\mathbf{y}_{ij}^{EI}$ giving the maximum EI,
	\begin{equation}
	\mathbf{y}_{ij}^{EI}=\arg
	\max_{\mathbf{y}_{ij} \in \mathcal{Y}(\mathbf{x}_i )} \mbox{E}_{\widetilde{q}_{\mathbf{x}_i}(\mathbf{y}_{ij},\pmb{\xi}_{ij})}\left[\mathrm{I}(\mathbf{y}_{ij}, \pmb{\xi}_{ij})\right].
	\label{eq: EIsearch2}
	\end{equation}
We add the point $(\mathbf{y}_{ij}^{EI}, \pmb{\xi}_{ij})$ to the set $\mathcal{P}_{\mathbf{x}_i}$ that includes all design points for the local metamodel $\mathcal{GP}_{\mathbf{x}_i}$ and run simulation there. Then, we update the metamodel by using (\ref{eq.KBLUP})--(\ref{eq.undervar}) and also update $\widehat{\mathbf{y}}^\star_{ij}$.  We repeat this procedure for all scenarios $\pmb{\xi}_{ij}$ that have not been terminated. 
	If all $N(\mathbf{x}_i)$ optimization problems meet the stopping criteria, $ |\mathcal{U}_{\mathbf{x}_i} | = N(\mathbf{x}_i)$, we terminate the second-stage optimal search for $\mathbf{x}_i$ in the $k$-th iteration and estimate  ${\mbox{E}} [\delta(\mathbf{x}_i,\pmb{\xi}_{ij}) | \pmb{\xi}_{ij}]$ by using (\ref{eq: meanog}).
}

	{In Step~(3), we solve the first-stage optimization. Given the results from second-stage optimization, we construct/update the global GP metamodel $\mathcal{GP}_o$ by using (\ref{eq.predictor})--(\ref{eq.MSE}) and find the current optimal decision, $\widehat{\mathbf{x}}^{\star(k)} =\arg \min_{\mathbf{x} \in \mathcal{S}_{\mathbf{x}}^{(k)}}\bar{G}(\mathbf{x})$. 
	The optimal search terminates when the simulation budget $C$ is exhausted. Otherwise, we generate a new set of design points $D_{\mathbf{x}}^{(k+1)}$, set the number of iteration $k=k+1$ and then loop back to Step~(2) for solving the second-stage optimization problems at any $\mathbf{x}_i\in D_{\mathbf{x}}^{(k+1)}$.}
	The EI criterion used in the second-stage optimization was originally introduced for the deterministic simulation and it is appropriate for the stochastic cases; see Huang et al.~\citep{Huang_etal_2006G} and Quan et al.~\citep{Quan_etal_2013}. 
	Thus, following Sun et al.~\citep{Sun_etal_2014}, we construct a sampling distribution,
	{
	\begin{equation}
	f^{(k+1)}(\mathbf{x})=\frac{\mbox{Pr}\{\widetilde{G}(\mathbf{x})
		<\bar{G}(\widehat{\mathbf{x}}^{\star(k)})\}}{\sum_{\mathbf{x}_i \in \mathcal{X}} \mbox{Pr}\{\widetilde{G}(\mathbf{x}_i)<\bar{G}(\widehat{\mathbf{x}}^{\star(k)})\}},
	\label{eq: sampling}
	\end{equation}
	where $\widetilde{G}(\cdot) \sim \mathcal{GP}_o(\widehat{G}(\cdot), s^2(\cdot))$.}
	Since $\mbox{Pr}\{\widetilde{G}(\mathbf{x})
	<\bar{G}(\widehat{\mathbf{x}}^{\star(k)}) \}$ is the posterior possibility that the point $\mathbf{x}\in\mathcal{X}$ achieves a better objective value than the current optimal, the normalized probability mass function $f^{(k+1)}(\mathbf{x})$ reflects the potential of point $\mathbf{x}$. The sampling distribution $f^{(k+1)}(\mathbf{x})$ is used for generating a new set of first-stage design points $D_{\mathbf{x}}^{(k+1)}$ for the next $(k+1)$-th iteration {with the size $|D_{\mathbf{x}}^{(k+1)}|=s$}. Following Sun et al.~\citep{Sun_etal_2014}, we set $s=5$ in the empirical study. 
{
Notice that if the first-stage solutions $\mathbf{x}$ drawn in the early iterations are promising, they are more likely to be selected again and we invest more simulation budget there to get more accurate estimation on $G(\mathbf{x})$. On the other hand, if such first-stage candidate solutions are inferior, they are less likely to be selected again. Thus, the proposed algorithm can efficiently utilize the simulation budget to search for the optimal solution $\mathbf{x}^\star$.}

	\subsection{Convergence of Global-Local Metamodel Assisted Two-Stage OvS}
	\label{subsec: prove}
	
	{In this section, we provide the theoretical guarantee of the convergence for the proposed global-local metamodel assisted two-stage OvS.} {The results are given under following assumptions:
	\begin{enumerate}
	    \item \label{assumption:bounded} The response 
		$\theta(\mathbf{x},\mathbf{y},\pmb{\xi}) \equiv c_0(\mathbf{x})+{q}(\mathbf{x}, \mathbf{y}, \pmb{\xi}) $
		is bounded for any $\mathbf{x}\in\mathcal{X}$, $\mathbf{y}\in\mathcal{Y}(\mathbf{x})$ and $\pmb{\xi}\sim F(\pmb{\xi})$. There exists a bound $\overline{M}$ such that $\left\Vert q\right\Vert_{\mathcal{H}_\phi(\mathcal{X})}\leq \overline{M}$, where $\mathcal{H}_{\pmb{\phi}}(\mathcal{X})$ denotes the reproducing kernel Hilbert space (RKHS) of correlation function $R_{\pmb{\phi}}(\mathbf{z},\mathbf{z}^\prime)$ ($\mathbf{z}\equiv (\mathbf{y},\pmb{\xi})$) on $(\mathcal{Y}(\mathbf{x}), \Xi)$.
		\item \label{assumption:discrete}  The first- and second-stage feasible sets are finite, i.e., $|\mathcal{X}| <\infty$ and $|\mathcal{Y}(x)|<\infty$ for any $\mathbf{x}\in\mathcal{X}$.
		\item \label{assumption:spatialvariane}  The spatial variance for first- and second-stage GP is strictly positive, i.e., $\sigma^2>0$ and $\tau^2 >0$.
		\item \label{assumption:consistency} The MLEs for $\sigma^2$ and $\tau^2$ and $\phi$ are consistent under some regularity conditions \cite{Wald1949}.
	\end{enumerate}
	 The main result is shown in Theorem~\ref{thm:conv} and detailed proofs are provided in the appendix.}
		


	{
	We start with Lemma~\ref{lemma1:revisit} showing that the number scenarios allocated to any $\mathbf{x}\in\mathcal{X}$ goes to infinity as the number of iterations or the simulation budget goes to infinity, $N^{(k)}(\mathbf{x})\rightarrow \infty$ w.p.1 as $k\rightarrow \infty$. Then, the finite sampling error reduces to zero.
	}
	{
	\begin{lemma} \label{lemma1:revisit}
	Suppose that the metamodel-assisted optimization approach proposed in Algorithm~\ref{Global} is used to solve the two-stage optimization problem~(\ref{eq.intro1}) and Assumptions~\ref{assumption:bounded}--\ref{assumption:consistency} holds. Then $N^{(k)}(\mathbf{x}) \rightarrow \infty$ w.p.1 $k\rightarrow \infty$ $\forall x \in \mathcal{X}$.
	\end{lemma}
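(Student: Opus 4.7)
The plan is to proceed by contradiction. Suppose there exists some $\mathbf{x}^o\in\mathcal{X}$ and a finite $N_{\max}$ such that $N^{(k)}(\mathbf{x}^o)\leq N_{\max}$ for all $k$, i.e., $\mathbf{x}^o$ is only visited finitely many times. I would derive a contradiction by showing that the sampling distribution $f^{(k+1)}$ in (\ref{eq: sampling}) assigns probability at least some $c>0$ to $\mathbf{x}^o$ uniformly in $k$, and then invoking a conditional Borel-Cantelli argument to conclude $\mathbf{x}^o$ must in fact be selected infinitely often. Because each revisit multiplies $N^{(k)}(\mathbf{x}^o)$ by $g>1$, infinitely many revisits would force $N^{(k)}(\mathbf{x}^o)\to\infty$, contradicting the boundedness assumption.

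The first and most delicate step is a uniform positive lower bound on the global posterior variance $s^2(\mathbf{x}^o)$ given by (\ref{eq.MSE}). Because $\mathbf{x}^o$ is visited at most $N_{\max}$ times, either $\mathbf{x}^o\notin\mathcal{P}_o$ (so its standard GP variance is bounded below by a quantity depending only on $\tau^2$ and the correlation matrix on the finite set $\mathcal{X}$) or $\mathbf{x}^o\in\mathcal{P}_o$ with corresponding diagonal noise entry $V_{\mathbf{x}^o,\mathbf{x}^o}\geq\underline{V}>0$, since $V_{\mathbf{x}^o,\mathbf{x}^o}$ scales like the sample-variance of the bootstrap sample divided by $N^{(k)}(\mathbf{x}^o)\leq N_{\max}$. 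Combining Assumption~\ref{assumption:spatialvariane} ($\tau^2>0$), Assumption~\ref{assumption:discrete} (finite $\mathcal{X}$, so the correlation matrix restricted to $\mathcal{X}$ has a positive minimum eigenvalue), and Assumption~\ref{assumption:consistency} (MLEs for $\tau^2$ and $\pmb{\phi}$ stay bounded and bounded away from zero), a Schur-complement argument applied to $[\Sigma+V]^{-1}$ shows $s^2(\mathbf{x}^o)\geq c_s>0$ for every $k$.

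Next, Assumption~\ref{assumption:bounded} gives $|\theta(\mathbf{x},\mathbf{y},\pmb{\xi})|\leq\overline{M}$, so the global posterior mean $\widehat{G}(\mathbf{x}^o)$ in (\ref{eq.predictor}) and the current best estimate $\bar{G}(\widehat{\mathbf{x}}^{\star(k)})$ are uniformly bounded in $k$. Writing the GP tail probability as $\mbox{Pr}\{\widetilde{G}(\mathbf{x}^o)<\bar{G}(\widehat{\mathbf{x}}^{\star(k)})\}=\Phi\!\left(\tfrac{\bar{G}(\widehat{\mathbf{x}}^{\star(k)})-\widehat{G}(\mathbf{x}^o)}{s(\mathbf{x}^o)}\right)$, the bounded numerator together with the bounded-below denominator $s(\mathbf{x}^o)\geq\sqrt{c_s}$ in the argument of $\Phi$ yield $\mbox{Pr}\{\widetilde{G}(\mathbf{x}^o)<\bar{G}(\widehat{\mathbf{x}}^{\star(k)})\}\geq c_p>0$. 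Since the normalizer in (\ref{eq: sampling}) is at most $|\mathcal{X}|$, we obtain $f^{(k+1)}(\mathbf{x}^o)\geq c_p/|\mathcal{X}|>0$ uniformly. Letting $A_k=\{\mathbf{x}^o\in D_{\mathbf{x}}^{(k+1)}\}$ and $\mathcal{F}_k$ the filtration generated by the algorithm through iteration $k$, independence of the $s$ draws from $f^{(k+1)}$ gives $\mbox{Pr}(A_k\mid\mathcal{F}_k)\geq 1-(1-c_p/|\mathcal{X}|)^s>0$. Hence $\sum_{k=1}^{\infty}\mbox{Pr}(A_k\mid\mathcal{F}_k)=\infty$ almost surely, and L\'evy's conditional Borel-Cantelli lemma gives $A_k$ i.o. a.s., the desired contradiction.

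The hard part is clearly the uniform lower bound on $s^2(\mathbf{x}^o)$: as other points are revisited indefinitely, their noise variances $V_{ii}\to 0$, which could in principle drive the variance at $\mathbf{x}^o$ arbitrarily small via spatial correlation. The essential observation rescuing the argument is that the product-form Gaussian kernel is strictly positive definite on the finite set $\mathcal{X}$, so after taking the Schur complement with respect to the $\mathbf{x}^o$ row/column of $\Sigma+V$, the residual Kriging variance stays uniformly positive. Controlling the MLE parameters $\widehat{\tau}^2$ and $\widehat{\pmb{\phi}}$ away from degenerate values (invoking Assumption~\ref{assumption:consistency}) is the subtle supporting ingredient.
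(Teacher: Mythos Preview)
Your overall strategy---show that the sampling distribution $f^{(k+1)}(\mathbf{x}^o)$ is bounded below by a positive constant and then invoke a conditional Borel--Cantelli argument---is the same as the paper's (which in turn follows Sun et al.~\citep{Sun_etal_2014}). However, the two proofs diverge on exactly the point you flag as ``the hard part,'' and your treatment there contains a gap.

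\textbf{The variance lower bound.} You attempt to derive $s^2(\mathbf{x}^o)\geq c_s>0$ from the contradiction hypothesis $N^{(k)}(\mathbf{x}^o)\leq N_{\max}$ by arguing that the noise entry $V_{\mathbf{x}^o,\mathbf{x}^o}\geq\underline{V}>0$ and then applying a Schur-complement bound. But the claim $V_{\mathbf{x}^o,\mathbf{x}^o}\geq\underline{V}>0$ does not follow: $V_{\mathbf{x}^o,\mathbf{x}^o}$ is a bootstrap estimate of the variance of $\bar{G}(\mathbf{x}^o)$, and nothing in Assumptions~\ref{assumption:bounded}--\ref{assumption:consistency} prevents the underlying sample (or its bootstrap resamples) from being degenerate or nearly so. With $V_{\mathbf{x}^o,\mathbf{x}^o}=0$, the stochastic-kriging posterior variance at the design point $\mathbf{x}^o$ collapses to zero regardless of how positive-definite the kernel is on the other points. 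The paper does not try to derive this bound at all; instead it explicitly \emph{imposes} a floor $s^2(\mathbf{x})\geq\sigma_{\min}^2$ for every observed $\mathbf{x}$ as an algorithmic device (citing Sun et al.), and handles unobserved $\mathbf{x}$ via a separate inequality (Proposition~\ref{proposition:errorlowerbound}) giving $s^2(\mathbf{x})\geq\tau^2[1-r(\underline{d})]^2$. Your Schur-complement idea is in the right spirit for the unobserved case, but for observed points you would need either the paper's imposed floor or an additional nondegeneracy assumption on the scenario-level variance.

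\textbf{The numerator bound.} You bound $\bar{G}(\widehat{\mathbf{x}}^{\star(k)})-\widehat{G}(\mathbf{x}^o)$ directly from Assumption~\ref{assumption:bounded}. The paper takes a more elaborate route: it first invokes an auxiliary result (its Lemma~\ref{proposition3:liminf}) to restrict attention to the event $\{\liminf_k \bar{G}(\widehat{\mathbf{x}}^{\star(k)})>G(\mathbf{x}^\star)-\nu\}$, and separately derives a uniform upper bound $\widehat{G}(\mathbf{x})\leq\overline{M}d_{\max}$ via the Woodbury identity and finiteness of $\mathcal{X}$. Your shortcut is in principle workable, but note that ``$\widehat{G}(\mathbf{x}^o)$ is uniformly bounded'' does not follow immediately from $|\theta|\leq\overline{M}$: the kriging weights $\Sigma(\mathbf{x}^o,\cdot)'[\Sigma+V]^{-1}$ could in principle blow up as $V\to 0$, and controlling them uniformly over all design sets $\mathcal{P}_o\subseteq\mathcal{X}$ requires the same finiteness argument the paper carries out.
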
}
	\raggedbottom
{Furthermore, since the second-stage optimization search is based on the expected improvement. In the proposed algorithm, we gradually reduce the second-stage optimality gap through controlling the threshold of relative expected improvement. Then, in Lemma~\ref{lemma2:gapconv0}, we can show that the expected improvement for any unobserved second-stage decision $\mathbf{y}$ is positive and bounded. 
By letting the threshold $\alpha^{(k)}(\mathbf{x})$ gradually decreasing to zero as $k\rightarrow \infty$, all untried points will eventually be simulated. It implies that we eventually visit all possible solutions in $\mathcal{Y}(\mathbf{x})$ when $k$ is large and the optimality gap becomes zero.}
		{
	\begin{lemma} \label{lemma2:gapconv0}
	Suppose that the metamodel-assisted optimization approach proposed in Algorithm~\ref{Global} is used to solve the two-stage optimization problem~(\ref{eq.intro1}) and Assumptions~\ref{assumption:bounded}--\ref{assumption:consistency} holds. Then $\delta(\mathbf{x},\pmb{\xi}_{j}) \rightarrow 0$ w.p.1 as $k\rightarrow \infty$ $\forall \mathbf{x} \in \mathcal{X}$.
	\end{lemma}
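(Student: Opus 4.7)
The plan is to combine Lemma~\ref{lemma1:revisit} with the geometric decay of the relative EI threshold enforced in Step~(2.1) of Algorithm~\ref{Global} and a contradiction argument based on the RKHS bound supplied by Assumption~\ref{assumption:bounded}. Fix an arbitrary $\mathbf{x}\in\mathcal{X}$ and a scenario $\pmb{\xi}_{j}$. Lemma~\ref{lemma1:revisit} implies that $\mathbf{x}$ is revisited infinitely often w.p.1, so its visit count $t(k)\to\infty$, and therefore $\alpha^{(k)}(\mathbf{x})=\alpha_{0}\,g^{-(t(k)-1)/2}\to 0$. Since $q$ is uniformly bounded (Assumption~\ref{assumption:bounded}), the right-hand side of the stopping criterion~(\ref{eq.condAlpha}) also vanishes, so whenever the second-stage inner loop at $\mathbf{x}$ terminates, $\max_{\mathbf{y}\in\mathcal{Y}(\mathbf{x})}\mbox{E}_{\widetilde{q}_{\mathbf{x}}}\!\left[\mathrm{I}(\mathbf{y},\pmb{\xi}_{j})\right]$ must likewise tend to $0$.

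I would next set up a contradiction. Suppose $\delta(\mathbf{x},\pmb{\xi}_{j})\geq\epsilon_{0}>0$ along a subsequence of revisits of $\mathbf{x}$; then there is a true minimizer $\mathbf{y}^{\star}_{j}\in\mathcal{Y}(\mathbf{x})$ with $q_{\mathbf{x}}(\mathbf{y}^{\star}_{j},\pmb{\xi}_{j})\leq q_{\mathbf{x}}(\widehat{\mathbf{y}}^{\star}_{j},\pmb{\xi}_{j})-\epsilon_{0}$. Invoking the standard RKHS posterior-error bound $|\widehat{q}_{\mathbf{x}}(\mathbf{z})-q_{\mathbf{x}}(\mathbf{z})|\leq\overline{M}\,s_{\mathbf{x}}(\mathbf{z})$ at $\mathbf{z}=(\mathbf{y}^{\star}_{j},\pmb{\xi}_{j})$, I would derive a positive, $k$-independent lower bound on the EI evaluated at $(\mathbf{y}^{\star}_{j},\pmb{\xi}_{j})$ by splitting into two regimes. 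If $s_{\mathbf{x}}(\mathbf{y}^{\star}_{j},\pmb{\xi}_{j})\leq\epsilon_{0}/(2\overline{M})$, the RKHS bound forces $\Delta=q_{\mathbf{x}}(\widehat{\mathbf{y}}^{\star}_{j},\pmb{\xi}_{j})-\widehat{q}_{\mathbf{x}}(\mathbf{y}^{\star}_{j},\pmb{\xi}_{j})\geq\epsilon_{0}/2$, so by~(\ref{eq. formEI}) the EI is at least $\Delta\cdot\Phi(\Delta/s)\geq\epsilon_{0}/4$. If instead $s_{\mathbf{x}}(\mathbf{y}^{\star}_{j},\pmb{\xi}_{j})>\epsilon_{0}/(2\overline{M})$, then the second term $s\,\varphi(\Delta/s)$ in~(\ref{eq. formEI}) is bounded below by a positive constant depending only on $\epsilon_{0}$, $\overline{M}$, and the uniform bound on $|\Delta|$ from Assumption~\ref{assumption:bounded}. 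In either regime, $\max_{\mathbf{y}}\mbox{E}_{\widetilde{q}_{\mathbf{x}}}\!\left[\mathrm{I}(\mathbf{y},\pmb{\xi}_{j})\right]$ is bounded away from $0$ by a constant depending only on $\epsilon_{0}$, contradicting the conclusion of the previous paragraph. This proves $\delta(\mathbf{x},\pmb{\xi}_{j})\to 0$ w.p.1.

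The hard part is justifying the deterministic RKHS bound $|\widehat{q}-q|\leq\overline{M}\,s$ in the algorithmic setting, because Algorithm~\ref{Global} uses plug-in MLEs for $\sigma^{2}$ and $\pmb{\phi}$ rather than the true values, and the design set $\mathcal{P}_{\mathbf{x}}$ is constructed adaptively from EI-driven sampling rather than a prescribed sequence. Assumption~\ref{assumption:consistency} controls the first issue by making the plug-in kernel converge to the true-parameter kernel, so the RKHS bound holds asymptotically; making this rigorous probably requires showing that the posterior variance produced by the estimated hyperparameters converges uniformly to its true-parameter counterpart on the finite set $\mathcal{Y}(\mathbf{x})\times\{\pmb{\xi}_{j}\}$, which is well-posed by Assumption~\ref{assumption:discrete}. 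A secondary technical point is that the contradiction must be phrased at revisits for which the inner loop actually terminates; Assumption~\ref{assumption:discrete}, together with the fact that every simulated point contributes an exact interpolant and thereby strictly reduces $\max_{\mathbf{y}}\mathrm{EI}$, ensures that each inner loop terminates in finitely many EI steps, so the subsequence of terminating revisits is well-defined and infinite.
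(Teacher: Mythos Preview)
Your proposal is correct but the paper takes a simpler, more direct route that exploits the finiteness of $\mathcal{Y}(\mathbf{x})$ more heavily. After the common first step (Lemma~\ref{lemma1:revisit} $\Rightarrow$ $\alpha^{(k)}(\mathbf{x})\to 0$), the paper does not argue by contradiction at the true optimum. Instead it shows that \emph{every} unvisited point $\mathbf{y}\in\mathcal{Y}(\mathbf{x})\setminus\Theta_j^{(k)}(\mathbf{x})$ has EI bounded below by a universal positive constant, by combining two ingredients: Proposition~\ref{proposition:errorlowerbound} gives $s_{\mathbf{x}}(\mathbf{y},\pmb{\xi}_j)\geq \underline{s}>0$ for any unvisited point in the discrete grid, and then Bull's Lemma~8 gives $\mathrm{EI}\geq \sigma\,\kappa(-\overline{M}/\sigma)\,s_{\mathbf{x}}\geq \sigma\,\kappa(-\overline{M}/\sigma)\,\underline{s}$. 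Since $\alpha^{(k)}(\mathbf{x})q_{\max}$ eventually drops below this bound, the stopping criterion~(\ref{eq.condAlpha}) cannot be met while any point remains unvisited, so the entire finite set $\mathcal{Y}(\mathbf{x})$ is exhausted and $\delta(\mathbf{x},\pmb{\xi}_j)$ becomes \emph{exactly} zero for all large $k$.

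Your two-regime RKHS argument is essentially a hand-rolled version of Bull's Lemma~8 applied only at $\mathbf{y}^\star_j$, and it would be the natural strategy if $\mathcal{Y}(\mathbf{x})$ were continuous (where one cannot hope to visit all points). In the discrete setting of Assumption~\ref{assumption:discrete}, the paper's exhaustion argument is cleaner: it avoids the case split on $s$, avoids tracking $\epsilon_0$, and delivers the stronger conclusion that the gap is eventually identically zero rather than merely converging to zero. Your concerns about plug-in MLEs and inner-loop termination are handled in the paper exactly as you anticipate: Assumption~\ref{assumption:consistency} is invoked so that $\widehat{\sigma}^2\geq \sigma^2-\epsilon>0$ for large $k$, and inner-loop termination is immediate from $|\mathcal{Y}(\mathbf{x})|<\infty$ once the EI lower bound is in hand.
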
}
	\raggedbottom
		Theorem~\ref{thm:conv} shows that our two-stage optimization approach can guarantee the global convergence as the simulation budget $C$ goes to infinity. Since it simultaneously reduces the finite sampling error introduced by SAA and the error induced by the second-stage optimality gap to zero, we have a consistent performance estimator as the simulation budget goes to infinite, $\bar{G}(\mathbf{x}) \rightarrow G(\mathbf{x})$ as $N(\mathbf{x})	\rightarrow \infty$ for any $\mathbf{x} \in \mathcal{X}$. 
	Then, following the proof in Sun et al.~\citep{Sun_etal_2014}, we can show that $\bar{G}(\widehat{\mathbf{x}}^{\star}) {\rightarrow} G(\mathbf{x}^\star)$ w.p.1 as the budget $C \rightarrow \infty$ or the number of iteration $k\rightarrow \infty$.
	
	\begin{theorem} 
		\label{thm:conv}
		Suppose that Assumptions~\ref{assumption:bounded}--\ref{assumption:consistency} holds and the metamodel-assisted optimization approach proposed in Algorithm~\ref{Global} is used to solve the two-stage optimization problem~(\ref{eq.intro1}). Denote $\mathbf{x}^\star$ as the true optimal solution, i.e., $G(\mathbf{x}^\star)=\arg\min_{\mathbf{x} \in \mathcal{X}} G(\mathbf{x})$. Let $\widehat{\mathbf{x}}^{\star (k)}$ be the optimal decision obtained in the $k$-th iteration with the objective estimate $\bar{G}(\widehat{\mathbf{x}}^{\star (k)})$. Then, $\bar{G}(\widehat{\mathbf{x}}^{\star (k)}) {\rightarrow} G(\mathbf{x}^\star)$ w.p.1 as the simulation budget $C \rightarrow \infty$ or the iteration $k\rightarrow \infty$.

	\end{theorem}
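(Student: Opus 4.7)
The plan is to combine Lemmas \ref{lemma1:revisit} and \ref{lemma2:gapconv0} with a GPS-style infinite-visit argument from \citep{Sun_etal_2014}, organized around the decomposition
\begin{equation*}
\bar{G}(\widehat{\mathbf{x}}^{\star(k)}) - G(\mathbf{x}^\star)
= \bigl[\bar{G}(\widehat{\mathbf{x}}^{\star(k)}) - G(\widehat{\mathbf{x}}^{\star(k)})\bigr]
+ \bigl[G(\widehat{\mathbf{x}}^{\star(k)}) - G(\mathbf{x}^\star)\bigr].
\end{equation*}
The first bracket is an \emph{estimation error} that should vanish as soon as $\widehat{\mathbf{x}}^{\star(k)}$ is visited sufficiently often, and the second is an \emph{optimization error} that is nonnegative by definition of $\mathbf{x}^\star$ and must be shown to vanish.

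For the estimation error, I would fix an arbitrary $\mathbf{x} \in \mathcal{X}$ and rewrite
\begin{equation*}
\bar{G}(\mathbf{x}) - G(\mathbf{x})
= \frac{1}{N^{(k)}(\mathbf{x})}\sum_{j=1}^{N^{(k)}(\mathbf{x})}\Bigl[q_{\mathbf{x}}(\mathbf{y}^\star_{j},\pmb{\xi}_{j}) - \mathrm{E}_{\pmb{\xi}}[Q(\mathbf{x},\pmb{\xi})]\Bigr]
+ \frac{1}{N^{(k)}(\mathbf{x})}\sum_{j=1}^{N^{(k)}(\mathbf{x})}\Bigl[\delta(\mathbf{x},\pmb{\xi}_{j}) - \widehat{\mathrm{E}}[\delta(\mathbf{x},\pmb{\xi}_{j})\mid \pmb{\xi}_{j}]\Bigr].
\end{equation*}
By Lemma \ref{lemma1:revisit}, $N^{(k)}(\mathbf{x})\to\infty$ almost surely, so the first sum vanishes by the strong law of large numbers applied to i.i.d.\ draws of $\pmb{\xi}$ (boundedness of the integrand is guaranteed by Assumption \ref{assumption:bounded}). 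For the second sum, Lemma \ref{lemma2:gapconv0} forces each $\delta(\mathbf{x},\pmb{\xi}_{j})\to 0$ w.p.1, and the bootstrap estimator $\widehat{\mathrm{E}}[\delta\mid \pmb{\xi}_j]$ inherits this convergence since it is a posterior expectation under a GP whose MLE hyperparameters are consistent (Assumption \ref{assumption:consistency}); averaging two vanishing bounded sequences gives zero. Hence $\bar{G}(\mathbf{x})\to G(\mathbf{x})$ w.p.1 for every $\mathbf{x}\in\mathcal{X}$.

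For the optimization error, I would invoke the sampling distribution $f^{(k+1)}$ in (\ref{eq: sampling}). Because $\tau^2>0$ (Assumption \ref{assumption:spatialvariane}), the prediction variance $s^2(\mathbf{x}^\star)$ at any unvisited $\mathbf{x}^\star$ is strictly positive, so $\mathrm{Pr}\{\widetilde{G}(\mathbf{x}^\star)<\bar{G}(\widehat{\mathbf{x}}^{\star(k)})\}>0$ and therefore $f^{(k+1)}(\mathbf{x}^\star)>0$ at every iteration. Since $|\mathcal{X}|<\infty$ (Assumption \ref{assumption:discrete}) and the trajectory of sampled points dominates (in total-variation tail) an i.i.d.\ draw with positive weight on each atom, a second Borel--Cantelli argument, paralleling Theorems~1--2 in \citep{Sun_etal_2014}, yields $\mathbf{x}^\star\in\mathcal{S}_{\mathbf{x}}^{(k)}$ and $N^{(k)}(\mathbf{x}^\star)\to\infty$ w.p.1. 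Then the estimation-error bound applied at $\mathbf{x}^\star$ gives $\bar{G}(\mathbf{x}^\star)\to G(\mathbf{x}^\star)$, hence
\begin{equation*}
G(\mathbf{x}^\star) \le G(\widehat{\mathbf{x}}^{\star(k)})
\le \bar{G}(\widehat{\mathbf{x}}^{\star(k)}) + o(1)
\le \bar{G}(\mathbf{x}^\star) + o(1) \to G(\mathbf{x}^\star),
\end{equation*}
where the middle step uses uniform estimation-error convergence across the finite set $\mathcal{X}$ and the third step uses the definition of $\widehat{\mathbf{x}}^{\star(k)}$ as the in-sample minimizer of $\bar{G}$.

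The main obstacle will be making the Borel--Cantelli step rigorous: I need a uniform lower bound $f^{(k+1)}(\mathbf{x}^\star)\ge c_k$ with $\sum_k c_k=\infty$. This requires controlling $s^2(\mathbf{x}^\star)$ from below before $\mathbf{x}^\star$ is visited and controlling the numerator $\mathrm{Pr}\{\widetilde{G}(\mathbf{x}^\star)<\bar{G}(\widehat{\mathbf{x}}^{\star(k)})\}$ from below using the boundedness of the response (Assumption \ref{assumption:bounded}) and the RKHS bound $\|q\|_{\mathcal{H}_\phi}\le\overline{M}$, which together keep $\widehat{G}$ bounded and prevent $s^2$ from collapsing at a point never directly simulated. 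Once this uniform positivity is established, the rest of the argument is a direct transcription of the GPS convergence proof in \citep{Sun_etal_2014}.
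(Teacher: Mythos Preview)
Your proposal is correct and uses the same essential ingredients as the paper (Lemmas~\ref{lemma1:revisit} and~\ref{lemma2:gapconv0} plus the SLLN, exploiting $|\mathcal{X}|<\infty$), but you make it harder than it needs to be. Your ``main obstacle'' paragraph---lower-bounding $f^{(k+1)}(\mathbf{x}^\star)$ via Borel--Cantelli to force $N^{(k)}(\mathbf{x}^\star)\to\infty$---is redundant: that is exactly the content of Lemma~\ref{lemma1:revisit}, which you already invoke for the estimation error and which applies to \emph{every} $\mathbf{x}\in\mathcal{X}$, including $\mathbf{x}^\star$. Once Lemma~\ref{lemma1:revisit} and Lemma~\ref{lemma2:gapconv0} give $\bar{G}(\mathbf{x})\to G(\mathbf{x})$ w.p.1 for all $\mathbf{x}$, the paper finishes with a short two-case argument: if $\bar{G}(\widehat{\mathbf{x}}^{\star(k)})\le G(\mathbf{x}^\star)$ then $|\bar{G}(\widehat{\mathbf{x}}^{\star(k)})-G(\mathbf{x}^\star)|\le |\bar{G}(\widehat{\mathbf{x}}^{\star(k)})-G(\widehat{\mathbf{x}}^{\star(k)})|$, and if $\bar{G}(\widehat{\mathbf{x}}^{\star(k)})> G(\mathbf{x}^\star)$ then $|\bar{G}(\widehat{\mathbf{x}}^{\star(k)})-G(\mathbf{x}^\star)|\le |\bar{G}(\mathbf{x}^\star)-G(\mathbf{x}^\star)|$; either way $|\bar{G}(\widehat{\mathbf{x}}^{\star(k)})-G(\mathbf{x}^\star)|\le \max_{\mathbf{x}\in\mathcal{X}}|\bar{G}(\mathbf{x})-G(\mathbf{x})|\to 0$. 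Your sandwich reaches the same conclusion but routes through the unnecessary step of re-establishing that $\mathbf{x}^\star$ is eventually sampled.
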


	
	\section{Empirical Study}
	\label{sec:example}

	{In this paper, we consider two-stage stochastic programming for complex systems with unknown second-stage response surface. 
	The existing simulation optimization approaches typically consider one-stage optimization. 
    In addition, to the best of our knowledge, the two-stage stochastic programming approaches typically assume the response surface $q_{\mathbf{x}}(\mathbf{y},\pmb{\xi})$ known; see the literature review in Section~\ref{sec:liter}. Thus, in this section, we compare 	
	the finite sample performance of proposed global-local metamodel assisted two-stage OvS  with two methods, including a random sampling SAA approach and the deterministic look-ahead (DLH) policy model solved by using the state-of-art simulation optimization approach, called Gaussian process-based search approach (GPS) proposed in \cite{Sun_etal_2014}. } 

	{
	For the \textit{random sampling SAA approach}, 
	without any prior information about the optimal first- and second-stage decisions, suppose that all first-stage solutions have equal probability to be optimal. Also at any given first-stage decision, all second-stage solutions have equal probabilities to be the best. Thus, in {this approach}, we randomly generate $N_1$ first-stage candidate solutions, $\{\mathbf{x}_1,\mathbf{x}_2,\ldots,\mathbf{x}_{N_1}\}$. At each solution $\mathbf{x}_i$ with $i=1,2,\ldots,N_1$, we generate $N_2$ scenarios $\{\pmb{\xi}_{i1},\pmb{\xi}_{i2},\ldots,\pmb{\xi}_{iN_2}\}$. For each scenario $\pmb{\xi}_{ij}$ with $j=1,2,\ldots,N_2$, we randomly {generate} ${C}/{(N_1 N_2)}$ second-stage solutions and select the best one as the optimal $\widehat{\mathbf{y}}_{ij}^\star$. Then, for each visited first-stage solution $\mathbf{x}_i$ with $i=1,2,\ldots,N_1$, we aggregate the results from all $N_2$ second-stage optimization problems, and obtain an estimate of the objective value, $\bar{G}(\mathbf{x}_i)=c_0(x_i)+\sum_{j=1}^{N_2}q_{\mathbf{x}_i}(\widehat{\mathbf{y}}_{ij}^\star,\pmb{\xi}_{ij})/N_2$. The first-stage solution giving the best objective is selected, $\widehat{\mathbf{x}}^\star = \arg\min_{\mathbf{x}_i\in \{\mathbf{x}_1,\mathbf{x}_2,\ldots,\mathbf{x}_{N_1}\}}\bar{G}(\mathbf{x}_i)$.} 
	
	{
For the \textit{deterministic look-ahead policy model} (see the description in \cite{WarrenPowell_2014}), we consider	
\begin{equation}
	\min_{\mathbf{x} \in \mathcal{X}, \mathbf{y} \in \mathcal{Y}(\mathbf{x})}  \mbox{  } G_D(\mathbf{x},\mathbf{y}) \equiv c_0(\mathbf{x})+\mbox{E}_{\pmb{\xi}}
	\left[ 
	 {q}(\mathbf{x}, \mathbf{y}, \pmb{\xi})\right], 
	\label{eq.singlestage}  
	\end{equation} 
which can be used for stochastic control {\citep{Birge:2011:ISP:2031490}}. Then, the Gaussian process-based search approach (GPS) proposed in \citep{Sun_etal_2014} is used to efficiently solve the optimization problem in (\ref{eq.singlestage}). 
Specifically, we model the unknown mean response surface $G_D(\cdot)$ with a GP metamodel having the spatial variance denoted as $\sigma^2_{GPS}$. Then, following the simulation optimization proposed in \citep{Sun_etal_2014}, we develop a sampling distribution to efficiently guide the search for the optimal decisions of $(\mathbf{x},\mathbf{y})$. 
	In each iteration, the sampling distribution is used to generate $m$ promising decisions of $(\mathbf{x},\mathbf{y})$. At each selected  $(\mathbf{x},\mathbf{y})$, we assign $r$ independent scenarios of $\pmb{\xi}$. We repeat this search procedure until reaching to the computational budget. 
}

{To study the finite sample performance of proposed framework and compare it with the random sampling SAA and the deterministic look-ahead approach with GPS, we consider two examples, including a simple two-stage linear optimization problem and a supply chain management example.}
	
\subsection{A Two-Stage Linear Optimization Problem}
	\label{subsec: toyexample}

	We first consider a two-stage linear stochastic optimization example from Ekin et al.~\citep{Ekin_Polson_Soyer_2014},
	\begin{eqnarray}
		\min_{0 \leq x \leq 3} \mbox{  } G(x)=-3x+\mbox{E} [Q(x, \xi) ]  
		\quad \mbox{   with  } \quad Q(x, \xi)= \min_{y \geq 0} \{\xi y: 0.5x+y \leq 5\} 
		\label{eq: toyobj} 
	\end{eqnarray}  
	where $\xi$ follows the lognormal distribution with mean $0$ and variance $1$. It is easy to see that the first-stage optimal solution is $x^\star=3$.  
	Notice that the objective function in (\ref{eq: toyobj}) is monotonic in $x$, which means that the closer a solution $x$ is to the true optimal
	$x^\star=3$, the better quality it has. Thus, we can examine the algorithm's performance by directly checking the value of optimal first-stage decision $\widehat{x}^\star$.   
	To study the performance of our approach, we pretend that the objective function is unknown, and it is estimated by simulation. 
	We discretize the solution spaces of $x$ and $y$ with an increment $0.01$.
	In our optimization procedure, we set the initial threshold for the second-stage relative optimality gap $\alpha_0=0.1$ and set the initial number of scenarios  $n_0=10$.

	We study the performance of proposed global-local metamodel assisted two-stage OvS, random sampling SAA approach, and deterministic look-ahead policy with GPS (DLH-GPS) under different simulation budget $C=600,1000,2000$. For the random sampling SAA approach, we consider two representative settings for $N_1$ and $N_2$: $N_1=10, N_2=10$ and $N_1=10,N_2=20$. 
	For the GPS, we set $m=10, r=10$ when the budget $C=600,1000$ and set $m=13,r=10$ when $C=2000$. Following the setting of Section~(5.1) in \citep{Sun_etal_2014}, we set the spatial variance of the GP to be $\sigma_{GPS}=5$.
	Table~\ref{table: toy} records mean and standard deviation (SD) of $\widehat{x}^\star$ obtained by using these approaches. The results are estimated based on 100 macro-replications. Given the same simulation budget, our method and deterministic look-ahead with GPS provide much higher quality solutions in terms of means and standard deviations of $\widehat{x}^\star$. They deliver $\widehat{x}^\star$ very close to $x^\star=3$ with all three budget levels. 
	By contrast, the optimal decision obtained by the random sampling SAA approach has low quality and high estimation uncertainty, and it shows only a small improvement as $C$ increases. 
	
		\begin{table}[h]
		\centering
		\caption{Mean and SD of $\widehat{x}^\star$ obtained by three candidate approaches
		\label{table: toy} }{
		\begin{tabular}{|c|c|c|c|c|c|c|c|c|}
			\hline
			& \multicolumn{2}{|c|}{Our approach} &
			\multicolumn{2}{|c|}{DLH-GPS} &
			\multicolumn{2}{|c|}{\begin{tabular}[c]{@{}c@{}}Random sampling SAA\\ $(N_1=10,N_2=10)$\end{tabular}} & \multicolumn{2}{|c|}{\begin{tabular}[c]{@{}c@{}}Random sampling SAA\\ $(N_1=10,N_2=20)$\end{tabular}}
			\\  \cline{2-7}
			\hline
			& mean          & SD   & mean          & SD          & mean         & SD        & mean            & SD                                           \\ \hline
			$C=600$     & 2.91         & 0.07   & 2.91 &0.07    & 2.69         & 0.28      & 2.67            & 0.27                                      \\ \hline
			$C=1000$     & 2.92         & 0.05   &2.92 & 0.06    & 2.68         & 0.29      & 2.67            & 0.31                                         \\ \hline
			$C=2000$     & 2.94         & 0.04   & 2.93 & 0.05      & 2.73        & 0.22      & 2.73            & 0.22                                         \\ \hline
		\end{tabular}
		}
	\end{table}

	We also examine the estimation accuracy of $\bar{G}(\widehat{x}^\star)$, the estimator of  corresponding objective $G(\widehat{x}^\star)$ of the obtained optimal solution $\widehat{x}^\star$. Since for each $\widehat{x}^\star$ with $G(\widehat{x}^\star)=-3 \widehat{x}^\star$, we can calculate the relative estimation error 
	$rE \equiv \frac{|\bar{G}(\widehat{x}^\star)-G(\widehat{x}^\star)|}{G(\widehat{x}^\star)} \cdot 100\%$. Table ~\ref{table: toybias} documents mean and SD of $rE$ in $\%$  obtained by using these three approaches. Our method can provide reliable estimates of $G(\widehat{x}^\star)$. The random sampling SAA method shows its deficiency in the objective value estimation accuracy and it has only a small improvement as $C$ increases.
				\begin{table}[h]
	\centering
		\caption{Mean and SD of relative estimation error $rE\equiv {|\bar{G}(\widehat{x}^\star)-G(\widehat{x}^\star)|}/{G(\widehat{x}^\star)} \cdot 100\%$ (in $\%$).
		\label{table: toybias}}{
		\begin{tabular}{|c|c|c|c|c|c|c|c|c|}
			\hline
			& \multicolumn{2}{|c|}{Our approach} &
			\multicolumn{2}{|c|}{DLH-GPS} &
			\multicolumn{2}{|c|}{\begin{tabular}[c]{@{}c@{}}Random sampling SAA\\ $(N_1=10,N_2=10)$\end{tabular}} & \multicolumn{2}{|c|}{\begin{tabular}[c]{@{}c@{}}Random sampling SAA\\ $(N_1=10,N_2=20)$\end{tabular}}
			\\  \cline{2-7}
			\hline
			& mean          & SD   & mean          & SD          & mean         & SD        & mean            & SD                                           \\ \hline
			$C=600$     & 5.4         & 3.0     & 5.5 & 3.2   & 24.2         & 5.0      & 23.1            & 5.6                                     \\ \hline
			$C=1000$     & 5.0         & 2.9       & 5.2 & 2.8 & 24.1         & 6.0      & 21.7            & 6.2                                         \\ \hline
			$C=2000$     & 4.0   & 3.0  & 4.0 & 3.0   & 22.1        & 4.0      & 22.0            & 6.0   \\\hline
		\end{tabular}}
	\end{table}
	
In addition, we further study the two-stage linear optimization problem in (\ref{eq: toyobj}) and note that $\xi y$ is monotonically increasing in $y$ for $\xi > 0$. Since $\xi$ follows the lognormal distribution, it means that the optimal value $y^\star$ doesn't depend on $\xi$. That explains why the proposed global-local metamodel assisted two-stage OvS demonstrates the similar performance with the deterministic look-ahead approach with GPS. 


	\subsection{A Supply Chain Management Example}
	\label{subsec:supplyChainExample}
	In this section, we use a supply chain management example to study the performance of our approach. It is inspired by our research collaboration with a bio-pharmaceutical manufacturing company.
	The company produces various commercial and clinical products, which requires some common vital raw materials, including soy and other chemical raw materials. For simplification, we only consider the soy raw material and one type of raw chemical material  used for producing the key clinical product. The company orders soy and chemical raw material from outside vendors. While the chemical raw material can be fast-delivered, due to the regulations and long testing cycles, soy has long lead time. Since the clinical demand has high prediction uncertainty \citep{Kaminsky_Wang_2015}, the company faces high fluctuations in the total cost.  Thus, 
	{the company
	is interested in finding the first-stage soy ordering decision $x$ and the second-stage decisions $\mathbf{y}=(u,s,S)$, including production scheduling $u$ and inventory control for the raw chemical material (specified by $(s,S)$ review policy), to minimize the expected  overall cost.}  

	{
	Considering the long lead time of soy delivery, the company first forecasts the clinical demand and places the soy order $x$ in advance. Suppose that $x$ is within the range $[0, 5000]$ with an increment $20$. Then, after the clinical demand is the realized, the company needs to make two types of decisions: the inventory control for the chemical material and the daily production decision $u$.
	Due to the fast-delivery nature, the company excises the daily review $(s,S)$ policy for the chemical raw material satisfying $100 \leq s < 400$, $200 \leq S < 500$ and $s < S$. Here, we consider a variety of choices: $(s,S)=(100, 200)$, $(100, 300)$, $(100,400)$,$(100,500)$, $(200,300)$,$(200,400)$,$(200,500)$,$(300,400)$, $(300,500)$, and $(400, 500)$. For simplification, suppose that the chemical raw material orders have zero lead time.}

	{The production planning horizon has four weeks, and each week has five work days. Let $\mathcal{D}=(d_1, d_2, d_3, d_4)$, where $d_i$ denotes the aggregated clinical demand occurring in the $i$-th week with $i=1, 2, \ldots, 4$. 
	If the production can not fully meet the demand $d_i$, the unmet demand will be subcontracted at a much higher price $P_c$ per unit.   
	If the company produces more than needed, the additional products will be stored with the holding cost as
	$P_e$ per unit. The goal is to minimize the expected total cost,
	}
	{
		\begin{eqnarray}
		 \min_{x \in \mathcal{X}} G(x) &\equiv  P_s x + \mbox{E}_{\mathcal{D}} \left[ \min_{(s,S,u)} \sum_{i=1}^{4} \left(P_r \cdot \sum_{j=1}^5 o_{ij} + P_c \cdot h_i^- + P_e \cdot h_i^+ \right)\right] \label{eq: 2stg genobj} \nonumber  \\
		\mbox{S.t.}
		& h_i^- = \max (d_i- h_{i-1}^+- 5 u, 0)  \quad \forall i
		\label{eq: 2stg gen2} \nonumber \\
		& h_i^+ = \max ( h_{i-1}^++ 5 u-d_i, 0 ) \quad \forall i
		\label{eq: 2stg gen3} \nonumber \\ 
	    & I_{ij}= \max(I_{i,j-1} - u + o_{ij}, 0) \nonumber \\
		& o_{ij}=\begin{cases}
    S - I_{ij}, & \text{$I_{ij} \leq s$},\\
    0, & \text{$I_{ij} > s$}.
  \end{cases} \nonumber \\
  	  		& 0 \leq u \leq {x}/{20}, \quad u  \mbox { is an integer},\nonumber  
		\label{eq: 2stg gen5} 
	\end{eqnarray} 
	where $P_s$ and $P_r$ are the unit ordering costs for soy and chemical raw material, $h_i^+$ denotes the inventory left at week $i$, $h_i^-$ denotes the unmet demand for week $i$, $o_{ij}$ and $I_{ij}$ denote the raw chemical material ordering decision and the inventory in the $j$-th day of $i$-th week. 
	Let starting chemical raw material inventory $I_{10} = 100$.
	Thus, the second-stage production cost at the $i$-th week consists of the ordering cost for the fast-delivery chemical, the subcontract and inventory costs. 
	Let the soy ordering price $P_s=10$, the chemical ordering price $P_r=5$, the inventory cost $P_e=5$ and the penalty $P_c=100$. 
For the deterministic look-ahead policy, we solve the optimization problem,
		\begin{eqnarray}
		 \min_{x,s,S,u} G_D(x,s,S,u) &\equiv  P_s x + \mbox{E}_{\mathcal{D}} \left[\sum_{i=1}^{4} \left(P_r \cdot \sum_{j=1}^5 o_{ij} + P_c \cdot h_i^- + P_e \cdot h_i^+ \right)\right] .
		\label{eq: 1stg genobj} \nonumber 
	\end{eqnarray} 
We apply Gaussian process-based search approach (GPS) with same setting as \ref{subsec: toyexample} except the standard deviation of the Gaussian process $\sigma_{GPS} = 15$ following Section~(5.2) in \citep{Sun_etal_2014}.
	}

	{Given any $x$, since there is no closed-form of $G(x)$, we use SAA with $N_B$ scenarios to correctly estimate the mean response, which will be used to assess the performance of optimal solutions obtained by the different candidate approaches.}
	To determine the proper sample size $N_B$ so that we can accurately estimate the objective $G({x})$, we did a side experiment by running $10$ macro-replications. In each macro-replication, we randomly generate a first-stage action $x$ (with equal probability at each solution). Then, we generate $N_B$ second-stage problems. For each second-stage problem, we exhaustively go through every combination of $\mathbf{y}$ and thus find the corresponding second-stage optimal decisions. For various choices of $N_B$, we record the relative difference error ${|\bar{G}(x)-G(x)|}/{G(x)}$, where $G(x)$ denotes the objective value by $10^5$ second-stage samples. Suppose $10^5$ is large enough and the finite sample estimation error is negligible. The maximum relative
	error for different choices of $N_B$ obtained from $10$ macro-replications is recorded in Table~\ref{table:absolute}.  We observe that $N_B=5000$ has the maximum relative error not exceeding $1\%$. Balancing the computational cost and the accuracy, we use $N_B=5000$ to evaluate the objective value for ${x}$.

	\begin{table}%
	\centering
		\caption{Maximum absolute relative difference for $G(x)$ estimation.
			\label{table:absolute}}{
			\begin{tabular}{|c|c|c|c|c|c|c|} 
				\hline
				$N_B$  & $10^2$ & $5\times10^2$ & $ 10^3$  & $5 \times10^3$ &$ 10^4$ &$5\times10^4$ \\  \hline
				relativeError       & 9.1\% & 3.5\% & 3.2\% & 0.6\% & 0.3\% & 0.2\%  \\ \cline{1-7}
			\end{tabular}}
		\end{table}%
		
			{Moreover, by conducting the side experiment, we obtain the true response surface $G(x)$ against soy order $x$ for cases with $\sigma = 10, 20, 30$. 
		We get the optimal cost: $G({x}^\star) = 9912$ for $\sigma=10$, $G({x}^\star) = 10625$ for $\sigma=20$, and $G({x}^\star) = 11484$ for $\sigma=30$.}

		\begin{sloppypar}
			{Then, given the same simulation budget, we compare the results obtained from proposed global-local metamodel assisted two-stage OvS, the random sampling SAA approach, and DLH-GPS.} Denote mean and SE of $G(\widehat{x}^\star)$ obtained by our approach as $\mbox{E}(G^\star_g)$ and $\mbox{SE}(G^\star_g)$, respectively. Let $n_m$ represent the number macro-replications, $\mbox{E}(G^\star_g)	\equiv\mbox{E}[G(\widehat{x}^\star)]$ and $\mbox{SE}(G^\star_g)\equiv\mbox{SD}(G^\star_g) / \sqrt{n_m}$, where SD represents the standard deviation (SD) of optimal objective estimate obtained from each macro-replication. Denote mean and SE of $G(\widehat{x}^\star)$ obtained by the random sampling SAA approach
			as $\mbox{E}(G^\star_n)$ and $\mbox{SE}(G^\star_n)$. {Denote those obtained by the DLH-GPS approach
			as $\mbox{E}(G^\star_s)$ and $\mbox{SE}(G^\star_s)$}. 
			In Tables~\ref{table: supplychainsd10}--\ref{table: supplychainsd30}, we record the results obtained from these approaches when $C=600,1000,2000$ and
			$d \sim N(150, 10^2)$, $N(150, 20^2)$ and $N(150, 30^2)$. 
			{They are based on $n_m=100$ macro-replications. 
			The mean of $G(\widehat{x}^\star)$ tends to decrease as the budget $C$ increases.
			The results in Tables~\ref{table: supplychainsd10}--\ref{table: supplychainsd30} show that our approach significantly outperforms the DLH-GPS and random sampling SAA approaches. It leads to much smaller expected cost and SE.  Moreover, we also see the mean of $G(\widehat{x}^\star)$ increases as the variance of demand increases for all approaches. It agrees with the real-world fact that the higher prediction uncertainty of demand results in higher cost of production scheduling and inventory control. 
			}

			\end{sloppypar}
					\begin{table}[!ht]
			\centering \small
			\caption{Performance statistics of $G(\widehat{x}^\star)$ when $d_i \sim N(150, 10^2)$
			\label{table: supplychainsd10}}{
				\begin{tabular}{|c|c|c|c|c|c|c|c|c|}
					\hline
					& \multicolumn{2}{c|}{Our approach} &
					\multicolumn{2}{c|}{DLH-GPS} &
					\multicolumn{2}{c|}{\begin{tabular}[c]{@{}c@{}}Random Sampling SAA \\ $(N_1=10,N_2=10)$\end{tabular}} & \multicolumn{2}{c|}{\begin{tabular}[c]{@{}c@{}}Random Sampling SAA \\ $(N_1=10,N_2=20)$\end{tabular}} \\ \cline{2-9}
					& $\mbox{E}(G^\star_g)$  & $\mbox{SE}(G^\star_g)$ &$\mbox{E}(G^\star_s)$  & $\mbox{SE}(G^\star_s)$ & $\mbox{E}(G^\star_n)$  & $\mbox{SE}(G^\star_n)$ & $\mbox{E}(G^\star_n)$  & $\mbox{SE}(G^\star_n)$ \\ \hline
					$C=600$ &   12004 & 151 & 13087 & 224 & 16083 & 428 & 15917 & 428                                                \\ \hline
					$	C=1000$ & 11836 &  148 &12217 & 227 & 15789 & 369 &  15408 & 368  \\ \hline
					$C=2000$ & 11290 & 104 & 11906 & 295 & 15380 & 405 & 14605 &   313 \\ \hline
				\end{tabular}}
			\end{table}
			
			\begin{table}[!ht]
			\centering \small
				\caption{Performance statistics of $G(\widehat{x}^\star)$ when $d_i \sim N(150, 20^2)$
				\label{table: supplychainsd20}}{
					\begin{tabular}{|c|c|c|c|c|c|c|c|c|}
						\hline
						& \multicolumn{2}{c|}{Our approach} &
						\multicolumn{2}{c|}{DLH-GPS} &
						\multicolumn{2}{c|}{\begin{tabular}[c]{@{}c@{}}Random Sampling SAA\\ $(N_1=10,N_2=10)$\end{tabular}} & \multicolumn{2}{c|}{\begin{tabular}[c]{@{}c@{}}Random Sampling SAA\\ $(N_1=10,N_2=20)$\end{tabular}} \\ \cline{2-9}
						& $\mbox{E}(G^\star_g)$  & $\mbox{SE}(G^\star_g)$ &$\mbox{E}(G^\star_s)$  & $\mbox{SE}(G^\star_s)$ & $\mbox{E}(G^\star_n)$  & $\mbox{SE}(G^\star_n)$ & $\mbox{E}(G^\star_n)$  & $\mbox{SE}(G^\star_n)$ \\ \hline
						$C=600$ &  12135 & 173 & 13066 & 217 & 16462 & 427 & 16188 & 421                                                  \\ \hline
						$C=1000$ & 11905 & 97 & 12965& 163& 16342 & 371 &  15753 & 371  \\ \hline
						$C=2000$ & 11771 & 82 & 12272& 199& 16316 & 416 & 15391 & 324  \\ \hline
					\end{tabular}}
				\end{table}
				
				\begin{table}[!ht]
				\centering \small
					\caption{Performance statistics of $G(\widehat{x}^\star)$ when $d_i \sim N(150, 30^2)$
					\label{table: supplychainsd30}}
				    {
						\begin{tabular}{|c|c|c|c|c|c|c|c|c|}
							\hline
							& \multicolumn{2}{c|}{Our approach} &
							\multicolumn{2}{c|}{DLH-GPS} &
							\multicolumn{2}{c|}{\begin{tabular}[c]{@{}c@{}}Random Sampling SAA\\ $(N_1=10,N_2=10)$\end{tabular}} & \multicolumn{2}{c|}{\begin{tabular}[c]{@{}c@{}}Random Sampling SAA\\ $(N_1=10,N_2=20)$\end{tabular}} \\ \cline{2-9}
							& $\mbox{E}(G^\star_g)$  & $\mbox{SE}(G^\star_g)$& $\mbox{E}(G^\star_s)$  & $\mbox{SE}(G^\star_s)$ & $\mbox{E}(G^\star_n)$  & $\mbox{SE}(G^\star_n)$ & $\mbox{E}(G^\star_n)$  & $\mbox{SE}(G^\star_n)$ \\ \hline
							$C=600$ & 12601 & 97 &13507& 217& 17103 & 438 & 16657 & 423 \\ \hline
    							$C=1000$ & 12522 & 125 & 13378& 155& 16684 & 368 & 16063 & 357  \\ \hline
							$C=2000$ & 12236 & 66 & 12929 & 189& 16815 & 415 & 15995 & 322 \\ \hline
						\end{tabular}}
					\end{table}
	
		\begin{sloppypar}			
				{In addition, we define the relative estimation error as $r\Delta G_{\gamma} \equiv \{\mbox{E}[G(\widehat{x}^\star_{\gamma})] -G({x}^\star)\}/{G({x}^\star)}
			= [\mbox{E}(G^\star_{\gamma}) -G({x}^\star)]/{G({x}^\star)}$ with $\gamma=g,s,n$ representing the results obtained by proposed global-local metamodel assisted two-stage OvS, DLH-GPS and random sampling SAA approach respectively. The true optimal solution $x^\star$ is obtained by the side experiments. For the case with $d \sim N(150, 20^2)$,  even with a very tight budget $C=600$, our approach can identify the promising solutions. 
			The average objective value obtained by our approach is {$12,135$} and we have {$r\Delta G_g =14.2\%$, $r\Delta G_{s}=23.0\%$ and $r\Delta G_{n}=54.9\%,52.4\%$ for two random sampling SAA settings} respectively. It shows our approach outperform DLH-GPS by $9\%$ and random sampling SAA by about $40\%$.
			As $C$ grows, the performance of our approach significantly improves. When $C=1000$, our method delivers the optimal solution with the objective value equal to {$11,905$}. We get {$r\Delta G_g =12.0\%$, $r\Delta G_{s}=22.0\%$ and $r\Delta G_{n}=53.8\%,48.3\%$ for two random sampling SAA settings. The results} suggest that our approach outperforms the deterministic look-ahead approach with GPS by $10\%$ and the random sampling SAA method by about $40\%$. 
			The similar performance is also observed when $d \sim N(150, 10^2)$ and $ N(150, 30^2)$.  
			When the variability of the demand increases, it requires a larger simulation budget to search for the optimal solution. 
			{According to Tables~\ref{table: supplychainsd10}--\ref{table: supplychainsd30}, we see the estimate $\mbox{E}(G^\star_g)$ obtained by our algorithm converges to the true optimum faster than the other competitors.}}
		
		\end{sloppypar}

		{Furthermore, in order to better understand the convergence behavior obtained from the proposed approach, we plot $G(\widehat{\mathbf{x}}^{\star(k)})$ with respect to the number of iterations $k$. We record the results from 10 representative runs in Figure~\ref{fig_est_optimal_cost_k}. They indicate that our algorithm can quickly search for the optimal solutions.}
		In addition, the average overhead computational cost from our approach is $0.8$ seconds per simulation run when $C=600,1000$, and $2$ seconds when $C=2000$. Since we consider the situations where each simulation run could be computationally expensive, the overhead is negligible.
		
			\begin{figure}[h!]
	\centering
	\includegraphics[width=0.8\textwidth]{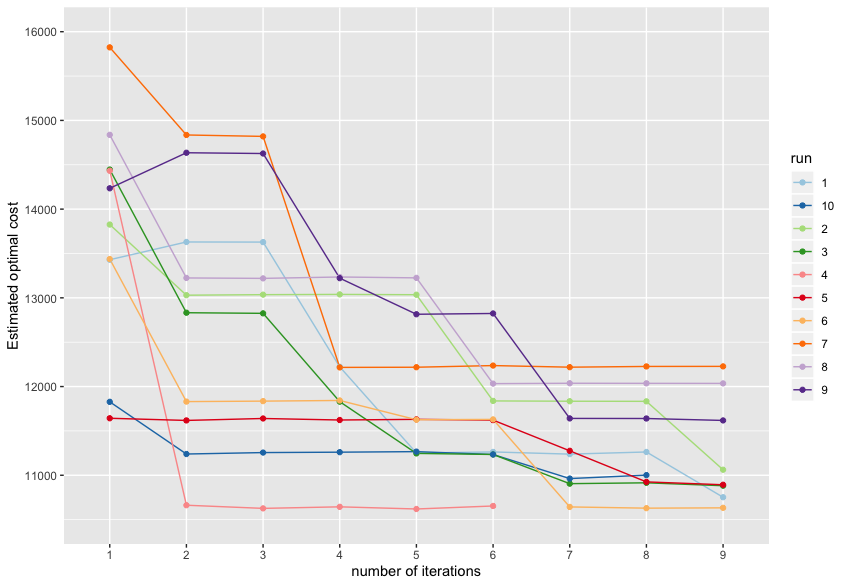}
	\caption{Ten representative sample path plots of $G(\widehat{x}^{\star(k)})$ versus the index of iteration $k$ for the supply chain management example with $\sigma = 20$} \label{fig_est_optimal_cost_k}
\end{figure}

				{In addition, we study the performance of proposed approach as the dimension of second-stage decisions $\mathbf{y}$ increases. 
				Specifically, 
				we modify the supply chain management example by allowing more flexible production and inventory decision making. Let $u_1, u_2$ and $(s_1, S_1), (s_2,S_2)$ represent the daily production and inventory control decisions made in the first and second two weeks. Then we replace the constraint $0\leq u \leq x/20$ in (\ref{eq: 2stg genobj}) with}
{$$0 \leq u_1 + u_2 \leq x/10.$$ We consider the daily inventory review policy $(s_i, S_i)$ for the chemical raw material satisfying $100 \leq s_i < 400$, $200 \leq S_i < 500$ and $s_i < S_i$ with $i =1,2$.
Thus, as we change from $\mathbf{y}=(u,s,S)$ to $\mathbf{y}=(u_1,u_2,s_1,S_1,s_2,S_2)$, the dimension and complexity of second-stage optimization problem also increase. Under the same parameter setting as Section~\ref{subsec:supplyChainExample}, the results obtained from our algorithm, DLH-GPS and random sampling SAA approaches are summarized in Table~\ref{table: extended supplychainsd20}. We can see that the proposed global-local metamodel assisted two-stage OvS still delivers the optimal solution with smaller expected cost in all budget levels than the other two algorithms. 
}
								\begin{table}[!h]
			\centering
				\caption{Performance statistics of $G(\widehat{x}^\star)$ with the increased size of second stage when $d_i \sim N(150, 20^2)$
				\label{table: extended supplychainsd20}}{
					\begin{tabular}{|c|c|c|c|c|c|c|c|c|}
						\hline
						& \multicolumn{2}{c|}{Our approach} &
						\multicolumn{2}{c|}{DLH-GPS} &
						\multicolumn{2}{c|}{\begin{tabular}[c]{@{}c@{}}Random Sampling SAA \\ $(N_1=10,N_2=10)$\end{tabular}} & \multicolumn{2}{c|}{\begin{tabular}[c]{@{}c@{}}Random Sampling SAA \\ $(N_1=10,N_2=20)$\end{tabular}} \\ \cline{2-9}
						& $\mbox{E}(G^\star_g)$  & $\mbox{SE}(G^\star_g)$ &$\mbox{E}(G^\star_s)$  & $\mbox{SE}(G^\star_s)$ & $\mbox{E}(G^\star_n)$  & $\mbox{SE}(G^\star_n)$ & $\mbox{E}(G^\star_n)$  & $\mbox{SE}(G^\star_n)$ \\ \hline
						$C=600$ &  12948 & 221 & 13532 & 362 & 17007 & 443 & 16730 & 440                                             \\ \hline
    						$C=1000$ & 12621 & 292 & 13339& 353& 16687 & 405 & 16184   & 408  \\ \hline
    						$C=2000$ & 12606 & 193 & 13282 &297 & 16510 & 385 & 16160 &  439 \\ \hline
					\end{tabular}}
				\end{table}

\subsection{The effect of $\alpha_0$ and $n_0$}	
\label{subsec:sensitivity}
Comparing with one-stage optimization studied in the simulation literature, in two-stage stochastic programming, the optimality gap from the second-stage optimization  impacts the search performance. Thus, we study the effect of $\alpha_0$ in this section. The relative EI threshold $\alpha_0$ impacts the exploitation and exploration trade-off. When $\alpha_0$ is large, our proposed algorithm turns to put less effort for second-stage optimization for each scenario. 
When $\alpha_0$ is small, we could spend more efforts to search for the second-stage optimal solution for each scenario.

	{Here, we empirically study the effect of $\alpha_0$.
		Tables~\ref{table: toytol} and \ref{table: biotol} provide the results for the examples in Sections~\ref{subsec: toyexample} and \ref{subsec:supplyChainExample} when $\alpha_0=0.05,0.1,0.2$. Table~\ref{table: toytol} indicates the choice of $\alpha_0$ has relative less impact on the optimization for the two-stage linear optimization problem.
	Table~\ref{table: biotol} provides mean and SE of $G(\widehat{x}^\star)$ for the supply chain management example when $d_i\sim N(150,20^2)$.
	 All three settings can deliver near-optimal $\widehat{x}^\star$ while the SD of $\widehat{x}^\star$ obtained from $\alpha_0=0.1$ is slightly smaller than the other two settings. 
}
	
	
	\begin{table}[!h]
	\centering
		\caption{For the two-stage linear optimization problem, mean and SD of $\widehat{x}^\star$
			when $\alpha_0=0.05,0.1,0.2$.
		\label{table: toytol}}{
		\begin{tabular}{|c|c|c|c|c|c|c|}
			\hline
			\multirow{2}{*}{} & \multicolumn{2}{c|}{\multirow{2}{*}{\begin{tabular}[c]{@{}l@{}}$\alpha_0=0.05$ \\ \end{tabular}}} & \multicolumn{2}{c|}{\multirow{2}{*}{\begin{tabular}[c]{@{}l@{}}$\alpha_0=0.1$\\ \end{tabular}}} & \multicolumn{2}{c|}{\multirow{2}{*}{\begin{tabular}[c]{@{}l@{}}$\alpha_0=0.2$\\ \end{tabular}}} \\
			& \multicolumn{2}{l|}{}                                                                         & \multicolumn{2}{l|}{}                                                                       & \multicolumn{2}{l|}{}                                                                       \\ \hline
			& mean          & SD             & mean         & SD        & mean            & SD                                           \\ \hline
			$C=600$     & 2.90         & 0.07      & 2.91         & 0.07           & 2.90            & 0.07                                       \\ \hline
			$C=1000$    & 2.91         & 0.07       & 2.92         & 0.05           & 2.93            & 0.06                                         \\ \hline
			$C=2000$   & 2.93        & 0.06     & 2.94         & 0.04              & 2.93            & 0.07                                         \\ \hline
		\end{tabular}}
	\end{table}

	\begin{table}[!t]
	    \centering
		\caption{For the supply chain management example, mean and SE of $\widehat{x}^\star$ for $\alpha_0=0.05,0.1,0.2$ when $n_m=100$, $n_0=10$ and $d_i \sim N(150, 20^2)$
		\label{table: biotol}}{
		\begin{tabular}{|c|c|c|c|c|c|c|}
			\hline
			\multirow{2}{*}{} & \multicolumn{2}{c|}{\multirow{2}{*}{\begin{tabular}[c]{@{}l@{}}$\alpha_0=0.05$ \\ \end{tabular}}} & \multicolumn{2}{c|}{\multirow{2}{*}{\begin{tabular}[c]{@{}l@{}}$\alpha_0=0.1$\\ \end{tabular}}} & \multicolumn{2}{c|}{\multirow{2}{*}{\begin{tabular}[c]{@{}l@{}}$\alpha_0=0.2$\\ \end{tabular}}} \\
			& \multicolumn{2}{l|}{}                                                                         & \multicolumn{2}{l|}{}                                                                       & \multicolumn{2}{l|}{}                                                                       \\ \hline
			& $\mbox{E}(G^\star_g)$          & $\mbox{SE}(G^\star_g)$             & $\mbox{E}(G^\star_g)$         & $\mbox{SE}(G^\star_g)$        & $\mbox{E}(G^\star_g)$            & $\mbox{SE}(G^\star_g)$                                           \\ \hline
			$C=600$ & 13230   & 269        &12135         & 173           & 12343            & 180                                     \\ \hline
			$C=1000$  & 13040 &   133    & 11905        & 97           &   12063       &    158                                    \\ \hline
			$C=2000$   &   12093      &  153    & 11771          & 82              &   11724        &   137                                      \\ \hline
		\end{tabular}}
	\end{table}
	
	We also conduct the experiments studying the impact of $n_0$ by using both examples. Tables~\ref{table: toynpath} and~\ref{table: bionpath} provide the corresponding results when $n_0=10,20$ under the same parameter setting. Table~\ref{table: toynpath} doesn't show significant difference between $n_0=10 \mbox{ and } 20$ in all three budget levels for the two-stage linear optimization problem. In the supply chain management example, however, we see better result for $n_0=20$ according to Table~\ref{table: bionpath}. {In general, $n_0$ plays an important role in the trade-off between exploitation and exploration. When $\pmb{\xi}$ has higher uncertainty and the complexity of the response surface $G(\mathbf{x})$ is lower, we could require a larger number of initial scenarios, $n_0$. 
Without strong prior information, we would recommend starting with $n_0=10$.}
	
	\begin{table}[!t]
	\centering
		\caption{For the two-stage linear optimization problem,  Mean and SD of $\widehat{x}^\star$ when $n_0=10,20$
			\label{table: toynpath}}{
			\begin{tabular}{|c|c|c|c|c|}
				\hline
				& \multicolumn{2}{c|}{$n_0=10$} & \multicolumn{2}{c|}{$n_0=20$} \\ \hline
				& mean & SD & mean & SD \\ \hline
				$C=600$ & 2.91  & 0.07 & 2.91 & 0.07 \\ \hline
				$C=1000$ & 2.92 & 0.05 & 2.94 & 0.07 \\ \hline
				$C=2000$ & 2.94 & 0.04 & 2.94 & 0.08 \\ \hline
		\end{tabular}}
	\end{table}

	
		\begin{table}[!t]
		\centering
		\caption{For the supply chain management example, Mean and SD of $\widehat{x}^\star$ for $n_0=10,20$ when $n_m=100$, $\alpha_0=0.1$ and $d_i \sim N(150, 20^2)$.
		\label{table: bionpath}}{
		\begin{tabular}{|c|c|c|c|c|}
			\hline
			& \multicolumn{2}{c|}{$n_0=10$} & \multicolumn{2}{c|}{$n_0=20$} \\ \hline
			& $\mbox{E}(G^\star_g)$ & $\mbox{SE}(G^\star_g)$ & $\mbox{E}(G^\star_g)$ & $\mbox{SE}(G^\star_g)$ \\ \hline
			$C=600$ & 12135  & 173 & 11711& 83 \\ \hline
			$C=1000$ & 11905 & 97 & 11666 & 132 \\ \hline
			$C=2000$ & 11771 & 82 & 11192& 66  \\ \hline
		\end{tabular}}
	\end{table}

					\section{Conclusion}
				
					\label{sec:conclusion}

					In this paper, we {propose a global-local metamodel assisted two-stage OvS that can efficiently employ the tight simulation budget to solve the stochastic programming for complex systems}. In particular, for each visited first-stage decision, we construct a local metamodel which allows us to simultaneously solve all second-stage optimization problems sharing the same first-stage decision. Then, based on the second-stage optimization results, we construct a global metamodel accounting for the finite sampling error from SAA and the second-stage optimality gap. Assisted by the global-local metamodel, we develop a two-stage optimization approach that can efficiently employ the simulation budget to iteratively solve for the optimal first- and second-stage decisions. The empirical studies demonstrate that our algorithm delivers superior and stable optimal decisions. The proposed methodology may lay the groundwork for future research on two-stage risk-averse stochastic simulation optimization.

\section*{Acknowledgments}
	The authors acknowledge helpful discussion with Barry L. Nelson.

\appendix
\section{Nomenclature}
\nomenclature[E]{$n_m$}{number of macro replications.}
\nomenclature[E]{$C$}{simulation budget.}
\nomenclature[E]{$N_1$}{number of observed first-stage solutions for the random sampling SAA approach.}
\nomenclature[E]{$N_2$}{number of scenarios generated at each visited first-stage decision for the random sampling SAA approach.}
\nomenclature[E]{$N_B$}{number of scenarios used to accurately estimate the objective $G(\mathbf{x})$ at any $\mathbf{x}$.}
\nomenclature[E]{$r\Delta G$}{the relative error between estimated optimal cost and and true optimum.}
\nomenclature[E]{$d_i$}{the aggregated clinical demand occurring in the $i$-th week with $i=1,2,3,4$.}
\nomenclature[E]{$(s,S)$}{the daily review $(s,S)$ policy for the raw chemical material.}
\nomenclature[E]{$P_s$}{unit ordering costs for soy.}
\nomenclature[E]{$P_r$}{unit ordering costs for chemical raw material.}
\nomenclature[E]{$P_c$}{unit penalty cost for unmet demand.}
\nomenclature[E]{$P_e$}{unit inventory holding cost.}
\nomenclature[E]{$I_{ij}$}{the raw chemical material inventory in the $j$-th day of $i$-th week.}
\nomenclature[E]{$o_{ij}$}{raw chemical material ordering decision in the $j$-th day of $i$-th week.}
\nomenclature[E]{$h^-_{i}$}{unmet demand for the $i$-th week.}
\nomenclature[E]{$h^+_i$}{the inventory left at week $i$.}
\nomenclature[E]{$\sigma_{GPS}$}{spatial standard deviation of the Gaussian process metamodel in the DLH-GPS algorithm.}
\nomenclature[E]{$\mbox{E}(G^\star_g)$}{mean of $G(\widehat{x}^\star)$ obtained by the proposed global-local metamodel assisted two-stage OvS algorithm.}
\nomenclature[E]{$\mbox{SE}(G^\star_g)$}{standard error of $G(\widehat{x}^\star)$ obtained by the proposed global-local metamodel assisted two-stage OvS algorithm.}
\nomenclature[E]{$\mbox{E}(G^\star_s)$}{mean of $G(\widehat{x}^\star)$ obtained by the DLH-GPS.}
\nomenclature[E]{$\mbox{SE}(G^\star_s)$}{standard error of $G(\widehat{x}^\star)$ obtained by the DLH-GPS.}
\nomenclature[E]{$\mbox{E}(G^\star_n)$}{mean of $G(\widehat{x}^\star)$ obtained by the random sampling SAA approach.}
\nomenclature[E]{$\mbox{SE}(G^\star_n)$}{standard error of $G(\widehat{x}^\star)$ obtained by the random sampling SAA approach.}

\nomenclature[F]{$G(x)$}{objective function of the two-stage stochastic programming.}
\nomenclature[F]{$q(\mathbf{x}, \mathbf{y},\pmb{\xi})$}{second-stage response function}
\nomenclature[F]{$F(\pmb{\xi})$}{probability distribution function of $\pmb{\xi}$, characterizing the prediction uncertainty.}
\nomenclature[F]{$\bar{G}^c(\mathbf{x})$}{SAA estimator for any visited $\mathbf{x}$ with $N(\mathbf{x})$ number of scenarios.}
\nomenclature[F]{$\delta(\mathbf{x}_i,\pmb{\xi}_{ij})$}{optimality gap $\delta(\mathbf{x}_i,\pmb{\xi}_{ij})\equiv q_{\mathbf{x}_i}(\widehat{\mathbf{y}}^\star_{ij},\pmb{\xi}_{ij}) - q_{\mathbf{x}_i}(\mathbf{y}^\star_{ij},\pmb{\xi}_{ij})$}
\nomenclature[F]{$M(\mathbf{z})$}{GP model with mean zero.}
\nomenclature[F]{$\mathcal{W}(\cdot)$}{the global GP metamodel, defined as $\mathcal{W}(\cdot)\equiv \mu_0+W(\cdot)$}
\nomenclature[F]{$\mathcal{GP}_{\mathbf{x}_i}(\widehat{q}_{\mathbf{x}_i}(\bm Z), s^2_{\mathbf{x}_i}(\bm Z))$}{the local Gaussian process metamodel at the prediction points $\mathbf{Z}$.}
\nomenclature[F]{$\mbox{E}_{\widetilde{q}_{\mathbf{x}_i}(\mathbf{y}_{ij},\pmb{\xi}_{ij})}\left[\mathrm{I}(\mathbf{y}_{ij}, \pmb{\xi}_{ij})\right]$}{the expected improvement (EI) for any untried point $\mathbf{y}_{ij} \in \mathcal{Y}(\mathbf{x}_i)$ given the current optimum $\widehat{\mathbf{y}}^\star_{ij}$.}
\nomenclature[F]{$\mathcal{GP}_o(\widehat{G}(\bm X_\star), s^2(\bm X_\star))$}{the global Gaussian process metamodel at prediction points $\bm X_\star$ with mean $\widehat{G}(\bm X_\star)$ and variance $s^2(\bm X_\star)$.}
\nomenclature[F]{$f^{(k+1)}(\mathbf{x})$}{sampling distribution used for generating a new set of first-stage design points for the next $(k+1)$-th iteration.}
\nomenclature[F]{$R(\mathbf{z}-\mathbf{z^\prime} | \pmb{\phi})$}{product-form Gaussian correlation function of $(\mathbf{z}, \mathbf{z^\prime})$ for local metamodel.}
\nomenclature[F]{$r(\mathbf{x}-\mathbf{x^\prime} | \pmb{\phi})$}{product-form Gaussian correlation function of $(\mathbf{z}, \mathbf{z^\prime})$ for global metamodel.}
\nomenclature[F]{$\Phi$}{CDF of standard normal distribution}
\nomenclature[F]{$\varphi$}{PDF of standard normal distribution}

\nomenclature[V]{$\mathbf{x}$}{first-stage decision}
\nomenclature[V]{$\mathbf{y}$}{second-stage decision}
\nomenclature[V]{$\mathbf{x^\star}$}{optimal first-stage decision}
\nomenclature[V]{$\mathbf{y^\star}$}{optimal second-stage decision}
\nomenclature[V]{$\mathbf{\pmb{\xi}}$}{random input/scenarios}
\nomenclature[V]{$N(\mathbf{x})$}{number of scenarios assigned to design point at $\mathbf{x}$}
\nomenclature[V]{$\pmb{\phi}$}{parameter of correlation function controlling the spatial dependence.}
\nomenclature[V]{$\widetilde{q}_{\mathbf{x}_i}(\cdot,\cdot)$}{the posterior sample from the estimated GP, i.e. $\widetilde{q}_{\mathbf{x}_i}(\cdot,\cdot) \sim
		\mathcal{GP}_{\mathbf{x}_i}(\widehat{q}_{\mathbf{x}_i}(\cdot),s_{\mathbf{x}_i}^2(\cdot))$.}
\nomenclature[V]{$\beta_0$}{the global trend in local metamodel.}
\nomenclature[V]{$\mu_0$}{the global trend in global metamodel.}
\nomenclature[V]{$\tau^2$}{the spatial variance of correlation function of local metamodel.}
\nomenclature[V]{$\sigma^2$}{the spatial variance of correlation function of global metamodel.}
\nomenclature[V]{$\alpha_0$}{the initial relative EI threshold.}
\nomenclature[V]{$V$}{$ K\times K$ diagonal covariance matrix of $\bar{\mathbf{G}}_{\mathcal{P}_o}=(\bar{G}(\mathbf{x}_1),\bar{G}(\mathbf{x}_2),\ldots,\bar{G}(\mathbf{x}_{K}))^\prime$, where $\mathcal{P}_o\equiv\{\mathbf{x}_1,\mathbf{x}_2,\ldots,\mathbf{x}_{K}\}$ denotes the global metamodel design points.}
\nomenclature[V]{$\alpha^{(k)}(\mathbf{x}_i)$}{the relative EI threshold for any $\mathbf{x}_i$ controlling the stopping criteria for second-stage optimization. 
}
\nomenclature[V]{$n_0$}{initial number of scenarios allocated to each visited design point.}
\nomenclature[V]{$\mathbf{Q}_{\mathcal{P}_{\mathbf{x}_i}}$}{the corresponding simulation outputs at given design points $\mathcal{P}_{\mathbf{x}_i}$.}

\nomenclature[S]{$\mathcal{X}$}{feasible set of first-stage decisions}
\nomenclature[S]{$\mathcal{Y}(\mathbf{x})$}{feasible set of second-stage decisions depending on $\mathbf{x}$}
\nomenclature[S]{$\Xi$}{the scenario set containing all possible $\pmb{\xi}$.}
\nomenclature[S]{$D_{\mathbf{x}}^{(k)}$}{new first-stage design point set generated at $k$-th iteration.}
\nomenclature[S]{$S_{\mathbf{x}}^{(k)}$}{first-stage design point set accumulated until the $k$-th iteration, i.e., $\mathcal{S}_{\mathbf{x}}^{(k)}=\mathcal{S}_{\mathbf{x}}^{(k-1)}\bigcup D_{\mathbf{x}}^{(k)}$.}
\nomenclature[S]{$\mathcal{U}_{\mathbf{x}_i}$}{the set of finished second-stage scenarios at $\mathbf{x}_i$.}
\nomenclature[S]{$\mathcal{P}_{\mathbf{x}_i}$}{the set of second-stage design points at $\mathbf{x}_i$.}
\nomenclature[S]{$\mathcal{P}_o$}{the set of first-stage design points.}

\printnomenclature
\section{Proof of Proposition~\ref{proposition:errorlowerbound}}

\begin{proposition}
\label{proposition:errorlowerbound}
Let $\underline{d}(\mathbf{x}) \equiv \min_{\mathbf{x}^\prime\in \mathcal{X}}\{ |\mathbf{x}- \mathbf{x}^\prime |\}$. Then, we have the prediction variance $s^2(\mathbf{x})\geq  \tau^2[1-r(\underline{d}(\mathbf{x}))]^2$ and the lower bound is zero if $\underline{d}(\mathbf{x})=0$.
\end{proposition}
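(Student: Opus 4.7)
The prediction variance in (\ref{eq.MSE}) is the mean-squared error of the ordinary-kriging BLUP $\hat G(\mathbf{x}) = \sum_i \lambda_i^\ast \bar G(\mathbf{x}_i)$ with $\sum_i \lambda_i^\ast = 1$. My plan is to exploit this variational characterization together with a Cauchy--Schwarz step in $L^2$, so that any uniform lower bound on $\mathrm{MSE}(\lambda)$ over the unbiasedness constraint $\mathbf{1}^\top\lambda = 1$ descends to a lower bound on $s^2(\mathbf{x})$. Let $\mathbf{x}_o\in\mathcal{X}$ be a design point attaining the minimum distance, so $r(\mathbf{x},\mathbf{x}_o) = r(\underline{d}(\mathbf{x}))$.

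First I would use $\sum_i \lambda_i^\ast = 1$ to rewrite the prediction residual relative to $\mathbf{x}_o$ as
$G(\mathbf{x}) - \hat G(\mathbf{x}) = [W(\mathbf{x}) - W(\mathbf{x}_o)] - \sum_i \lambda_i^\ast [W(\mathbf{x}_i) - W(\mathbf{x}_o)] - \sum_i \lambda_i^\ast \bar e_i$,
and observe that the simulation-noise terms are uncorrelated with the GP part, so they contribute non-negatively to $s^2(\mathbf{x})$ and it suffices to lower bound the GP contribution.

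Next I would apply the covariance Cauchy--Schwarz inequality with the mean-zero auxiliary variable $W(\mathbf{x})$, which has variance $\tau^2$. This gives $s^2(\mathbf{x})\cdot\tau^2 \geq [\,\mathrm{Cov}(G(\mathbf{x}) - \hat G(\mathbf{x}),\,W(\mathbf{x}))\,]^2$, and a direct computation collapses the right side to $\tau^4\bigl(1 - \sum_i \lambda_i^\ast r(\mathbf{x},\mathbf{x}_i)\bigr)^2$. Because $r(\mathbf{x},\mathbf{x}_i)\leq r(\underline{d}(\mathbf{x}))$ for every $i$ by definition of $\mathbf{x}_o$, and $\sum_i \lambda_i^\ast = 1$, a convex-combination argument yields $\sum_i \lambda_i^\ast r(\mathbf{x},\mathbf{x}_i) \leq r(\underline{d}(\mathbf{x}))$, from which $s^2(\mathbf{x}) \geq \tau^2[1 - r(\underline{d}(\mathbf{x}))]^2$ follows after squaring and dividing by $\tau^2$. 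The degenerate case $\underline{d}(\mathbf{x})=0$ is immediate: if $\mathbf{x}$ coincides with a design point then $r(\underline{d}(\mathbf{x})) = r(0) = 1$, and the claimed bound collapses to the trivially satisfied $s^2(\mathbf{x})\geq 0$.

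The main technical obstacle is that the convex-combination step requires non-negative weights, whereas the ordinary-kriging BLUP can assign strongly negative $\lambda_i^\ast$ (for example when design points are tightly clustered). In that case $\sum_i \lambda_i^\ast r(\mathbf{x},\mathbf{x}_i)$ may exceed $r(\underline{d}(\mathbf{x}))$, so the naive bound fails and the argument must be augmented. The plan is to recover the inequality either by combining the Cauchy--Schwarz estimate with the non-negative correction term $\eta^\top[\mathbf{1}^\top(\Sigma+V)^{-1}\mathbf{1}]^{-1}\eta$ already present in (\ref{eq.MSE}) — which grows exactly when the weights become large in magnitude and $\eta$ moves away from zero — or by replacing the auxiliary variable $W(\mathbf{x})$ with $W(\mathbf{x}) - W(\mathbf{x}_o)$, whose variance $2\tau^2(1-r(\underline{d}(\mathbf{x})))$ captures the geometry relative to the nearest design point and interacts cleanly with the unbiasedness constraint.
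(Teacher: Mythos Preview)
Your route to the intermediate inequality $s^2(\mathbf{x}) \geq \tau^2\bigl[1 - \sum_i \lambda_i^\ast\, r(\mathbf{x},\mathbf{x}_i)\bigr]^2$ is genuinely different from the paper's. The paper works directly from the closed form (\ref{eq.MSE}): it identifies the BLUP weight vector $\pmb{\omega} = (\Sigma+V)^{-1}\bigl[\Sigma(\mathbf{x},\cdot) + \lambda\mathbf{1}\bigr]$ (with $\lambda$ the Lagrange multiplier enforcing $\mathbf{1}^\top\pmb{\omega}=1$), algebraically rewrites $s^2(\mathbf{x}) = \tau^2 + \pmb{\omega}^\top(\Sigma+V)\pmb{\omega} - 2\pmb{\omega}^\top\Sigma(\mathbf{x},\cdot)$, drops the nonnegative $\pmb{\omega}^\top V\pmb{\omega}$ term, and then invokes the kernel inequality $r(\mathbf{x}_i,\mathbf{x}_j)\geq r(\mathbf{x},\mathbf{x}_i)\,r(\mathbf{x},\mathbf{x}_j)$ to complete the square. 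Your covariance Cauchy--Schwarz with the auxiliary variable $W(\mathbf{x})$ reaches the same place in one line and does not rely on any multiplicative property of the particular correlation function, so in that respect it is both shorter and more general. (Your preliminary rewriting of the residual relative to $\mathbf{x}_o$ is not actually used in the Cauchy--Schwarz step and can be dropped.)

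For the final passage from $\bigl[1 - \sum_i \lambda_i^\ast\, r(\mathbf{x},\mathbf{x}_i)\bigr]^2$ to $[1 - r(\underline{d}(\mathbf{x}))]^2$, the paper does exactly what you flag as problematic: it applies $\sum_i \omega_i = 1$ together with $r(\mathbf{x},\mathbf{x}_i)\leq r(\underline{d}(\mathbf{x}))$ and asserts the bound, with no discussion of the sign of the weights. So the obstacle you raise is genuine and is present in the paper's own argument as well; the paper does not use the $\eta$-term or any alternative auxiliary variable to close it. Your proposal is therefore at least as complete as the published proof, and more candid about where the difficulty lies; if you can carry through either of your two suggested repairs you would in fact be strengthening the paper's argument rather than merely matching it.
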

\begin{proof}
Let $\pmb{\omega} = (\Sigma+V)^{-1}\left[\Sigma(\mathbf{x},\cdot) + \mathbf{1}\left(\frac{1-\mathbf{1}^\prime(\Sigma +V)^{-1}\Sigma(\mathbf{x},\cdot) }{\mathbf{1}^\prime(\Sigma+V)^{-1}
	\mathbf{1}}\right)\right]$, $\lambda=\frac{1-\mathbf{1}^\prime(\Sigma +V)^{-1}\Sigma(\mathbf{x},\cdot) }{\mathbf{1}^\prime(\Sigma+V)^{-1}
	\mathbf{1}}$ and $\mathbf{\eta}=1-\mathbf{1}^\prime
	(\Sigma+V)^{-1} \Sigma(\mathbf{x},\cdot).$ We first show that $\pmb{\omega}$ has nice property $\sum_i^K \omega_i = 1$. 
	Since $\Sigma(\mathbf{x},\cdot)^\prime(\Sigma+V)^{-1}\mathbf{1}$ and $\mathbf{1}^\prime(\Sigma+V)^{-1}
	\mathbf{1}$ are scalars, we have
	\begin{eqnarray}
\pmb{\omega}^\prime\mathbf{1}&=&\left[\Sigma(\mathbf{x},\cdot)^\prime + \mathbf{1}^\prime\left(\frac{1-\mathbf{1}^\prime(\Sigma +V)^{-1}\Sigma(\mathbf{x},\cdot) }{\mathbf{1}^\prime(\Sigma+V)^{-1}
	\mathbf{1}}\right)\right](\Sigma+V)^{-1}\mathbf{1}
	\nonumber\\
	&=& \Sigma(\mathbf{x},\cdot)^\prime(\Sigma+V)^{-1}\mathbf{1} + \left(\frac{1-\mathbf{1}^\prime(\Sigma +V)^{-1}\Sigma(\mathbf{x},\cdot) }{\mathbf{1}^\prime(\Sigma+V)^{-1}
	\mathbf{1}}\right) \mathbf{1}^\prime(\Sigma+V)^{-1}\mathbf{1}\nonumber\\
	&=&1.\nonumber
	\end{eqnarray}
	
	Let $\omega_i$ denote $i$-th element of $\pmb{\omega}$. Then, for the prediction variance, we derive the lower bound as follows,
\begin{align}
	s^2(\mathbf{x})  & =  \tau^2 - \Sigma(\mathbf{x},\cdot)^\prime [\Sigma+V]^{-1} \Sigma(\mathbf{x},\cdot)
 	+\mathbf{\eta}^\prime [\mathbf{1}^\prime(\Sigma+V)^{-1}
 	\mathbf{1}]^{-1}\mathbf{\eta}
	\nonumber\\
	&= \tau^2 - \Sigma(\mathbf{x},\cdot)^\prime(\Sigma+V)^{-1}\Sigma(\mathbf{x},\cdot) -\lambda\left[1-\mathbf{1}^\prime(\Sigma + V)^{-1}\Sigma(\mathbf{x},\cdot)\right] \nonumber\\
	&=\tau^2 +\lambda - \Sigma(\bm x,\cdot)^\prime(\Sigma+V)^{-1}\Sigma(\mathbf{x},\cdot) -\lambda\mathbf{1}^\prime(\Sigma + V)^{-1}\Sigma(\bm x,\cdot)\nonumber\\
 	&=\tau^2 +\lambda - \pmb{\omega}^\prime\Sigma(\mathbf{x},\cdot)\nonumber\\
 	&=\tau^2+\left[\Sigma(\mathbf{x},\cdot)^\prime+\lambda\mathbf{1}^\prime\right]\pmb{\omega} - 2\pmb{\omega}^\prime\Sigma(\mathbf{x},\cdot)
	\nonumber\\
	&=\tau^2+\pmb{\omega}^\prime(\Sigma+V)\pmb{\omega}-2\pmb{\omega}^\prime\Sigma(\mathbf{x},\cdot)\nonumber\\
	&\geq\tau^2+\pmb{\omega}^\prime\Sigma\pmb{\omega}-2\pmb{\omega}^\prime\Sigma(\mathbf{x},\cdot) \nonumber\\
	& =
	\tau^2 \left(1 + \sum_{i=1}^K\sum_{j=1}^K\omega_i\omega_j r(\mathbf{x}_i,\mathbf{x}_j)-2\sum_{i=1}^K\omega_i r(\mathbf{x},\mathbf{x}_i)\right) \nonumber\\
	&\geq\tau^2 \left(1 + \sum_{i=1}^K\sum_{j=1}^K\omega_i\omega_j r(\mathbf{x},\mathbf{x}_i)r(\mathbf{x},\mathbf{x}_j)-2\sum_{i=1}^K\omega_i r(\mathbf{x},\mathbf{x}_i)\right) \label{eq.mid201}\\
	&\geq \tau^2\left[1-\sum_{i=1}^K\omega_i r(\mathbf{x},\mathbf{x}_i)\right]^2\nonumber\\
	&\geq\tau^2[1-r(\underline{d}(\mathbf{x}))]^2.
\label{eq.reformulation} 
	\end{align}
Step~(\ref{eq.mid201}) follows because 
	the correlation function $r(\cdot)$ satisfies $r(\mathbf{x}_1-\mathbf{x}_2)\geq r(\mathbf{x}-\mathbf{x}_1)r(\mathbf{x}-\mathbf{x}_2)$ for any $\mathbf{x},\mathbf{x}_1, \mathbf{x}_2 \in \mathcal{X}$. Step~(\ref{eq.reformulation}) follows by applying $\sum_i^K \omega_i = 1$. 
Notice that $\tau^2[1-r(\underline{d}(\mathbf{x}))]^2 =0$ if $\mathbf{x}$ is one of design points, i.e., $\underline{d}(\mathbf{x}) = \min_{\mathbf{x}^\prime\in \mathcal{X}}\{| \mathbf{x}- \mathbf{x}^\prime |\} = 0$.
\end{proof}

\section{Proof of Lemmas~\ref{proposition1:revisit},~\ref{proposition2:samplepath_revisit} and~\ref{proposition3:liminf}}
Lemma~\ref{proposition1:revisit},~\ref{proposition2:samplepath_revisit} and~\ref{proposition3:liminf} will facilitate the following proofs of convergence.
	\begin{lemma} \label{proposition1:revisit}
		Suppose that the global-local metamodel-assisted OvS approach proposed in Algorithm~\ref{Global} is used to solve the two-stage optimization problem~(\ref{eq.intro1}). For any $\mathbf{x} \in \mathcal{X}$, if $f^{(k)}(\mathbf{x}) \geq c$ infinitely often (i.o.) for some constant $c>0$, then $N^{(k)}(\mathbf{x}) \rightarrow \infty$ w.p.1 as $k\rightarrow \infty$.
	\end{lemma}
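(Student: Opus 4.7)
The plan is to combine a conditional Borel--Cantelli argument for the sampling mechanism with the geometric growth rule that Algorithm~\ref{Global} applies whenever a design point is revisited. First I would observe that in Step~(3.2) the set $D_{\mathbf{x}}^{(k+1)}$ of new first-stage design points is generated by drawing $s$ i.i.d.\ samples from the probability mass function $f^{(k+1)}$. Let $\mathcal{F}_k$ denote the $\sigma$-field generated by all information available at the end of iteration $k$ (which makes $f^{(k+1)}$ measurable), and let $A_k=\{\mathbf{x}\in D_{\mathbf{x}}^{(k+1)}\}$. Then
\[
\Pr(A_k\mid \mathcal{F}_k)=1-\bigl(1-f^{(k+1)}(\mathbf{x})\bigr)^{s}.
\]

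Next I would use the hypothesis $f^{(k)}(\mathbf{x})\geq c$ i.o.\ to get a nontrivial lower bound on these conditional probabilities along an infinite subsequence. Specifically, on the random subsequence $\{k_j\}$ where $f^{(k_j+1)}(\mathbf{x})\geq c$, we have $\Pr(A_{k_j}\mid \mathcal{F}_{k_j})\geq 1-(1-c)^{s}>0$, so
\[
\sum_{k=1}^{\infty}\Pr(A_k\mid \mathcal{F}_k)=\infty \quad \text{w.p.1}.
\]
Applying L\'evy's extension of the Borel--Cantelli lemma (the conditional second Borel--Cantelli), this divergence of conditional probabilities with respect to the filtration $\{\mathcal{F}_k\}$ yields $\Pr(A_k\ \text{i.o.})=1$. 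In other words, the design point $\mathbf{x}$ is revisited infinitely often w.p.1.

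Finally I would invoke the scenario-allocation rule in Step~(2.1) of Algorithm~\ref{Global}. Each time $\mathbf{x}$ is revisited, $N^{(k)}(\mathbf{x})$ is updated by $N^{(k)}(\mathbf{x})=g\cdot N^{(k-1)}(\mathbf{x})$ with $g=1.5>1$, starting from $N(\mathbf{x})=n_0$ at the first visit. Hence if $\mathbf{x}$ is visited for the $t$-th time, $N^{(k)}(\mathbf{x})=\lceil n_0 g^{t-1}\rceil$, and by the previous paragraph $t\to\infty$ w.p.1. Since $g>1$, we conclude $N^{(k)}(\mathbf{x})\to\infty$ w.p.1 as $k\to\infty$, which is the desired statement.

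The main obstacle is the Borel--Cantelli step: the events $\{A_k\}$ are neither independent nor is the sequence $\{f^{(k)}(\mathbf{x})\geq c\}$ a deterministic subsequence, because $f^{(k+1)}$ depends on the entire simulation history through the posterior GP $\mathcal{GP}_o$. Invoking the conditional version of the second Borel--Cantelli lemma (rather than the classical independent version) is the technical point that makes the argument go through cleanly; once that step is in place, the geometric growth of $N^{(k)}(\mathbf{x})$ makes the conclusion immediate.
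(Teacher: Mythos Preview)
Your argument is correct and is essentially the same as the paper's: the paper's proof consists of a single line invoking Lemma~1 of Sun et al.~\citep{Sun_etal_2014}, and that lemma is proved by exactly the conditional (L\'evy) Borel--Cantelli argument you have written out, followed by the geometric growth rule for $N^{(k)}(\mathbf{x})$. You have effectively reconstructed the cited proof.
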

\begin{proof} The proof follows by applying Lemma~(1) in \citep{Sun_etal_2014}.
\end{proof}
	\begin{lemma} \label{proposition2:samplepath_revisit}
		Suppose that the global-local metamodel-assisted OvS approach proposed in Algorithm~\ref{Global} is used to solve the two-stage optimization problem~(\ref{eq.intro1}). Then $N^{(k)}(\widehat{\mathbf{x}}^{\star(k)})\rightarrow \infty$ w.p.1 as $k\rightarrow \infty$.
	\end{lemma}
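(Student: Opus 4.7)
The plan is to combine two ingredients: the finite cardinality of $\mathcal{X}$ (Assumption~\ref{assumption:discrete}), and Lemma~\ref{proposition1:revisit}, which lifts any persistent lower bound on $f^{(k)}(\mathbf{x})$ into almost-sure divergence of $N^{(k)}(\mathbf{x})$. The core claim I would establish first is that the sampling distribution always places uniformly bounded-below mass on the current incumbent; concretely, I would show there is a deterministic constant $c>0$ with $f^{(k+1)}\!\big(\widehat{\mathbf{x}}^{\star(k)}\big)\ge c$ for every $k$. Starting from definition~(\ref{eq: sampling}) and using that the denominator has at most $|\mathcal{X}|$ terms each bounded by $1$, this reduces to bounding below $\mbox{Pr}\{\widetilde{G}(\widehat{\mathbf{x}}^{\star(k)})<\bar{G}(\widehat{\mathbf{x}}^{\star(k)})\}=\Phi\!\big((\bar{G}(\widehat{\mathbf{x}}^{\star(k)})-\widehat{G}(\widehat{\mathbf{x}}^{\star(k)}))/s(\widehat{\mathbf{x}}^{\star(k)})\big)$. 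By Assumption~\ref{assumption:bounded} the response is bounded, so the numerator is bounded in absolute value by some $M$; by Assumption~\ref{assumption:spatialvariane} together with the strictly positive intrinsic noise entering $V$, the denominator $s(\widehat{\mathbf{x}}^{\star(k)})$ stays strictly positive at a design point, and the MLE consistency from Assumption~\ref{assumption:consistency} prevents the estimated variance from collapsing to zero spuriously. Combining these yields $\Phi(\cdot)\ge c'>0$ and hence $c=c'/|\mathcal{X}|$.

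Once this uniform lower bound is in hand, I would apply a pigeonhole argument. Since the sequence $\{\widehat{\mathbf{x}}^{\star(k)}\}_{k\ge 1}$ takes values in the finite set $\mathcal{X}$, there is a nonempty (random) subset $\mathcal{X}^\dagger\subseteq \mathcal{X}$ such that every $\mathbf{x}^\dagger\in \mathcal{X}^\dagger$ satisfies $\widehat{\mathbf{x}}^{\star(k)}=\mathbf{x}^\dagger$ infinitely often, while points outside $\mathcal{X}^\dagger$ appear as incumbent only finitely often. Along the (infinite) subsequence where $\widehat{\mathbf{x}}^{\star(k)}=\mathbf{x}^\dagger$ we have $f^{(k+1)}(\mathbf{x}^\dagger)\ge c$, so $f^{(k)}(\mathbf{x}^\dagger)\ge c$ i.o., and Lemma~\ref{proposition1:revisit} gives $N^{(k)}(\mathbf{x}^\dagger)\to\infty$ w.p.1. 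Because $\mathcal{X}^\dagger$ is finite, $\min_{\mathbf{x}\in\mathcal{X}^\dagger}N^{(k)}(\mathbf{x})\to\infty$ w.p.1 as well. For all sufficiently large $k$ the incumbent $\widehat{\mathbf{x}}^{\star(k)}$ lies in $\mathcal{X}^\dagger$, so $N^{(k)}(\widehat{\mathbf{x}}^{\star(k)})\ge \min_{\mathbf{x}\in\mathcal{X}^\dagger}N^{(k)}(\mathbf{x})\to\infty$ w.p.1, which is the claim.

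The main obstacle will be Step~1, i.e., the uniform positive lower bound on $\mbox{Pr}\{\widetilde{G}(\widehat{\mathbf{x}}^{\star(k)})<\bar{G}(\widehat{\mathbf{x}}^{\star(k)})\}$. Since $\widehat{\mathbf{x}}^{\star(k)}$ is by construction the minimizer of $\bar{G}$ over the visited set, the stochastic-kriging predictor at the incumbent tends to be pulled upward by neighboring observations with larger sample means, so the argument of $\Phi$ can be negative; simultaneously, as replications accumulate at the incumbent, $V_{ii}$ and hence $s^2(\widehat{\mathbf{x}}^{\star(k)})$ may shrink. Controlling the ratio $(\bar{G}-\widehat{G})/s$ therefore requires showing that $\bar{G}(\widehat{\mathbf{x}}^{\star(k)})-\widehat{G}(\widehat{\mathbf{x}}^{\star(k)})$ vanishes at a rate no worse than $s(\widehat{\mathbf{x}}^{\star(k)})$. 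This is precisely where the consistency of the MLEs (Assumption~\ref{assumption:consistency}) and the near-interpolation structure of stochastic kriging at design points become essential, since both $\widehat{G}(\widehat{\mathbf{x}}^{\star(k)})$ and $\bar{G}(\widehat{\mathbf{x}}^{\star(k)})$ are then consistent estimators of the same quantity $G(\widehat{\mathbf{x}}^{\star(k)})$.
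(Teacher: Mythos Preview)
The paper's own proof of this lemma is a single sentence citing Lemma~2 of Sun et al.~\citep{Sun_etal_2014}; it does not spell out an argument. Your proposal essentially reconstructs that cited argument: bound $f^{(k+1)}(\widehat{\mathbf{x}}^{\star(k)})$ below by a positive constant, then use finiteness of $\mathcal{X}$ and Lemma~\ref{proposition1:revisit} via a pigeonhole step. The overall structure is correct and matches the route the paper implicitly relies on.

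Where your proposal diverges is in how you handle the obstacle you correctly identify: at the incumbent, $V_{ii}$ and hence $s^2(\widehat{\mathbf{x}}^{\star(k)})$ can shrink, so you propose a rate argument showing $\bar{G}-\widehat{G}$ vanishes no slower than $s$. This is more delicate than necessary and is not what the paper (or Sun et al.) actually uses. The device the paper invokes---stated explicitly in its proof of Lemma~\ref{lemma1:revisit}---is to follow Sun et al.\ and artificially floor the posterior variance at observed design points by some $\sigma_{\min}^2>0$. With that floor in place, $s(\widehat{\mathbf{x}}^{\star(k)})\ge \sigma_{\min}$ uniformly, and since $|\bar{G}|\le \overline{M}$ and $|\widehat{G}|\le \overline{M}d_{\max}$ (both bounds derived in the proof of Lemma~\ref{lemma1:revisit} from Assumption~\ref{assumption:bounded} and do not depend on the lemma you are proving), the argument of $\Phi$ is bounded and the lower bound $f^{(k+1)}(\widehat{\mathbf{x}}^{\star(k)})\ge \Phi(c)/|\mathcal{X}|$ is immediate. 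Your appeal to ``strictly positive intrinsic noise entering $V$'' does not give this, because $V_{ii}\to 0$ as $N(\mathbf{x}_i)\to\infty$; the $\sigma_{\min}^2$ floor is the missing ingredient that makes Step~1 go through cleanly.
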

\begin{proof} The proof follows by applying Lemma~(2) in \citep{Sun_etal_2014}.
\end{proof}

	\begin{lemma} \label{proposition3:liminf}
		Suppose that the global-local metamodel-assisted OvS approach proposed in Algorithm~\ref{Global} is used to solve the two-stage optimization problem~(\ref{eq.intro1}) with finite first- and second-stage decision spaces, i.e., $|\mathcal{X}|<\infty$ and $|\mathcal{Y}(\mathbf{x})|<\infty$ for any $\mathbf{x}\in \mathcal{X}$. Then we have
	$\lim \inf_{k \rightarrow \infty} \bar{G}(\widehat{\mathbf{x}}^{\star (k)}) > G(\mathbf{x}^\star) - \nu$ for any $\nu >0$ w.p.1. It's equivalent to show
	\[
	\mbox{Pr}\{\bar{G}(\widehat{\mathbf{x}}^{\star (k)}) <  G(\mathbf{x}^\star) - \nu \quad \mbox{i.o.}\}=0.
	\]
	\end{lemma}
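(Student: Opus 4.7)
The plan is to reduce the statement to a pointwise claim on the finite set $\mathcal{X}$, then show that along any (random) subsequence of iterations on which the current estimated optimum is a fixed design point $\mathbf{x}$, the estimator $\bar{G}(\mathbf{x})$ converges to $G(\mathbf{x})\geq G(\mathbf{x}^\star)$ and is therefore eventually above $G(\mathbf{x}^\star)-\nu$, contradicting the infinitely-often condition.

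First, since $|\mathcal{X}|<\infty$, the event $\{\bar{G}(\widehat{\mathbf{x}}^{\star(k)})<G(\mathbf{x}^\star)-\nu\text{ i.o.}\}$ equals the finite union $\bigcup_{\mathbf{x}\in\mathcal{X}}A_{\mathbf{x}}$, where $A_{\mathbf{x}}\equiv\{\widehat{\mathbf{x}}^{\star(k)}=\mathbf{x}\text{ and }\bar{G}(\mathbf{x})<G(\mathbf{x}^\star)-\nu\text{ i.o.}\}$, so it suffices to prove $\mbox{Pr}(A_{\mathbf{x}})=0$ for each fixed $\mathbf{x}$. Whenever $\widehat{\mathbf{x}}^{\star(k_n)}=\mathbf{x}$ along an infinite subsequence $\{k_n\}$, Lemma~\ref{proposition2:samplepath_revisit} forces $N^{(k_n)}(\mathbf{x})\to\infty$ w.p.1. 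I would then decompose $\bar{G}(\mathbf{x})-G(\mathbf{x})$ into a Cesaro average of optimality-gap terms $\delta(\mathbf{x},\pmb{\xi}_j)-\widehat{\mbox{E}}[\delta(\mathbf{x},\pmb{\xi}_j)\mid\pmb{\xi}_j]$ plus the classical SAA sampling error $\tfrac{1}{N^{(k_n)}(\mathbf{x})}\sum_j q(\mathbf{x},\mathbf{y}^\star_j,\pmb{\xi}_j)-\mbox{E}_{\pmb{\xi}}[Q(\mathbf{x},\pmb{\xi})]$. Under boundedness of $q$ (Assumption~\ref{assumption:bounded}) the second term vanishes w.p.1 by the SLLN, while the first vanishes via Lemma~\ref{lemma2:gapconv0} of the main text, which drives each scenario-wise optimality gap---and hence its plug-in estimator via (\ref{eq: meanog})---to zero as the local EI threshold $\alpha^{(k)}(\mathbf{x})$ shrinks geometrically on revisits. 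Consequently $\bar{G}(\mathbf{x})\to G(\mathbf{x})\geq G(\mathbf{x}^\star)$ w.p.1 along $\{k_n\}$, so $\bar{G}(\mathbf{x})\geq G(\mathbf{x}^\star)-\nu$ eventually, giving $\mbox{Pr}(A_{\mathbf{x}})=0$ and, by the finite union, the desired conclusion.

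The hard part will be the joint handling of the two error sources in the decomposition above: the scenarios $\pmb{\xi}_j$ accumulate monotonically along the revisits of $\mathbf{x}$, but the corresponding $\widehat{\mathbf{y}}^\star_j$ are refined only as the second-stage searches continue, so the averaged optimality-gap term is not a sample mean of i.i.d.\ random variables. To turn the pointwise convergence of each $\delta(\mathbf{x},\pmb{\xi}_j)$ into convergence of the Cesaro average, I would combine the boundedness of $q$ with the geometric decay of $\alpha^{(k)}(\mathbf{x})$ and the finiteness of $\mathcal{Y}(\mathbf{x})$ (Assumption~\ref{assumption:discrete})---which guarantee that every untried second-stage point is eventually evaluated and that past $\delta(\mathbf{x},\pmb{\xi}_j)$ are repeatedly tightened as the local metamodel is refined---so that averaging a uniformly bounded and eventually vanishing sequence yields a vanishing limit.
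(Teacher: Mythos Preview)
Your proposal is correct and follows the same skeleton as the paper: a finite union bound over $\mathcal{X}$, convergence $\bar{G}(\mathbf{x})\to G(\mathbf{x})$ via the decomposition into the SAA sampling error (handled by the SLLN under Assumption~\ref{assumption:bounded}) and the averaged optimality-gap term (handled by driving $\alpha^{(k)}(\mathbf{x})\to 0$ so that every $\mathbf{y}\in\mathcal{Y}(\mathbf{x})$ is eventually evaluated), together with $G(\mathbf{x})\geq G(\mathbf{x}^\star)$.

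The one technical difference is how each argument obtains $N^{(k)}(\mathbf{x})\to\infty$ (and hence $\alpha^{(k)}(\mathbf{x})\to 0$) for the points that matter. You work along subsequences: on the event $A_{\mathbf{x}}$ the point $\mathbf{x}$ is the current best infinitely often, so Lemma~\ref{proposition2:samplepath_revisit} applied along that subsequence forces $N^{(k_n)}(\mathbf{x})\to\infty$. The paper instead borrows an augmentation device from Sun et al.: it adds fictitious scenarios so that every $\mathbf{x}\in\mathcal{X}$ carries at least $N^{(k)}(\widehat{\mathbf{x}}^{\star(k)})$ scenarios, defines the corresponding estimator $\bar{G}^a(\mathbf{x})$, and then uses $\bar{G}(\widehat{\mathbf{x}}^{\star(k)})=\bar{G}^a(\widehat{\mathbf{x}}^{\star(k)})\geq \min_{\mathbf{x}\in\mathcal{X}}\bar{G}^a(\mathbf{x})$ together with $\bar{G}^a(\mathbf{x})\to G(\mathbf{x})$ uniformly over $\mathcal{X}$. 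Your route avoids this coupling construction at the cost of some subsequence bookkeeping; the paper's route buys a clean uniform statement (every augmented estimator converges) at the cost of an artificial auxiliary object. One caution that applies equally to both: citing Lemma~\ref{lemma2:gapconv0} as stated is formally circular (its proof invokes Lemma~\ref{lemma1:revisit}, which in turn relies on the present lemma); what is actually needed, and what you correctly isolate, is only the implication ``$\alpha^{(k)}(\mathbf{x})\to 0\Rightarrow\delta^{(k)}(\mathbf{x},\pmb{\xi}_j)\to 0$,'' which you derive directly from Lemma~\ref{proposition2:samplepath_revisit} and the finiteness of $\mathcal{Y}(\mathbf{x})$.
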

	
\begin{proof} 
Following the proof of Lemma (3) in \citep{Sun_etal_2014}, at the end of each iteration, for any $\mathbf{x}\in \mathcal{X}$, we add additional scenarios to $\mathbf{x}$ such that the number of scenarios at $\mathbf{x}$ is at least $N^{(k)}(\widehat{\mathbf{x}}^{\star(k)})$. Let $n^{(k)}(\mathbf{x}) = \max_{\mathbf{x}\in\mathcal{X}}\left(N^{(k)}(\widehat{\mathbf{x}}^{\star(k)}),N^{(k)}(\mathbf{x}) \right)$ and denote $\bar{G}$ with additional scenarios as $\bar{G}^{a}(\mathbf{x})= \frac{1}{n^{(k)}(\mathbf{x})} \sum_{j=1}^{n^{(k)}(\mathbf{x})} \{ \breve{G}_j(\mathbf{x})-\mbox{E}[\delta(\mathbf{x},\pmb{\xi}_{j})|\pmb{\xi}_{j}] \}$.

Given finite decision space $|\mathcal{X}|<\infty$, when we use the Stochastic kriging global metamodel searching for the optimal first-stage decision, Lemma~\ref{proposition2:samplepath_revisit} states that $n^{(k)}(\mathbf{x}) \rightarrow  \infty$ as the iteration $k \rightarrow \infty$ or the simulation budget $C\rightarrow \infty$. Since $n^{(k)}(\mathbf{x}) = \ceil{n_0 g^{t(\mathbf{x})-1}}$, the number of revisits $t(\mathbf{x}) \rightarrow \infty$ as $k \rightarrow \infty$, which further implies $\alpha^{(k)}(\mathbf{x}) \rightarrow 0$ as $k \rightarrow \infty$. 
By applying Lemma~\ref{lemma2:gapconv0}, we can show 
	$\delta^{(k)}(\mathbf{x}, \pmb{\xi}_{j}) \stackrel{a.s.}\rightarrow 0$ as $k\rightarrow \infty$.
	
\begin{sloppypar}
	Let $\bar{G}^c(\mathbf{x}) \equiv \frac{1}{n^{(k)}(\mathbf{x})}\sum_{j=1}^{n^{(k)}(\mathbf{x})}G_j(\mathbf{x})$ with $G_j(\mathbf{x})=c_0(\mathbf{x})+q(\mathbf{x},\mathbf{y}^\star,\pmb{\xi}_j)$. 
	For any $\nu>0$, since
	$\bar{G}^{a}(\mathbf{x})$, ${G}(\mathbf{x})$ and $\bar{G}^c(\mathbf{x})$ are all finite,
	by applying the triangle inequality, we have
	\begin{eqnarray}
	\lefteqn{|\bar{G}^a(\mathbf{x}) - {G}(\mathbf{x}) | =|\bar{G}^{a}(\mathbf{x}) -\bar{G}^c(\mathbf{x})+\bar{G}^c(\mathbf{x})- {G}(\mathbf{x}) | }
	\nonumber \\ 
	& < & |\bar{G}^{a}(\mathbf{x})-\bar{G}^c(\mathbf{x})| +|\bar{G}^c(\mathbf{x})- {G}(\mathbf{x}) | 
	\nonumber\\
	& < &\nu,  \mbox{ as $k\rightarrow \infty$.}
	\label{eq.mid1} 
	\end{eqnarray}
	The inequality in (\ref{eq.mid1}) follows because: (1) $|\bar{G}^c(\mathbf{x})- {G}(\mathbf{x}) |<\nu/2$ holds by applying $n^{(k)}(\mathbf{x}) \rightarrow  \infty$ and the strong law of large numbers; and (2) $|\bar{G}^{a}(\mathbf{x}) -\bar{G}^c(\mathbf{x})|<\nu/2$ holds by applying $\delta^{(k)}(\mathbf{x}, \pmb{\xi}) \rightarrow 0$.
	That means $\bar{G}^{a}(\mathbf{x}) \rightarrow {G}(\mathbf{x})$ w.p.1 as $k\rightarrow \infty$. Since ${G}(\mathbf{x}^\star)=\min_{\mathbf{x}\in \mathcal{X}}{G}(\mathbf{x})\leq {G}(\mathbf{x})$, we have
	\[
	\mbox{Pr}\{ \bar{G}^{a}(\mathbf{x}) < {G}(\mathbf{x}^\star) -\nu \quad \mbox{i.o.}\}=0
	\]
	as $k \rightarrow \infty$.
	Hence, with $|\mathcal{X}|<\infty$ and $\min_{\mathbf{x} \in \mathcal{X}} \bar{G}^{a}(\mathbf{x}) \leq  \mbox{Pr}\{ \bar{G}(\widehat{\mathbf{x}}^{\star (k)})$, we have
	\begin{eqnarray}
	\lefteqn{ \mbox{Pr}\{ \bar{G}(\widehat{\mathbf{x}}^{\star (k)}) < G(\mathbf{x}^\star)- \nu  \quad \mbox{i.o.} \} 
	\leq  \mbox{Pr}\{ \min_{\mathbf{x} \in \mathcal{X}} \bar{G}^{a}(\mathbf{x}) < G(\mathbf{x}^\star) - \nu \quad \mbox{i.o.} \} } \nonumber \\
	&\leq & \sum_{\mathbf{x} \in \mathcal{X}}\mbox{Pr}\{\bar{G}^{a}(\mathbf{x}) < G(\mathbf{x}^\star) - \nu \quad \mbox{i.o.} \}  = 0. \nonumber
	\end{eqnarray} 
	\end{sloppypar}
\end{proof}

\raggedbottom

\section{Proof of Lemmas~\ref{lemma1:revisit} and~\ref{lemma2:gapconv0}} 
\begin{proof} (Lemma~\ref{lemma1:revisit})  
It suffices to prove $\mbox{Pr}\{\lim_{k\rightarrow\infty}N^{(k)}(\mathbf{x}) < N\} = 0 \ \forall \mathbf{x} \in \mathcal{X}, \forall N > 0$. By Lemma~\ref{proposition3:liminf}, for all $\mathbf{x}\in \mathcal{X}$, 
$$\mbox{Pr}\{\lim_{k\rightarrow\infty}N^{(k)}(\mathbf{x}) < N\} = \mbox{Pr}\{\lim_{k\rightarrow\infty}N^{(k)}(\mathbf{x})<N,\lim \inf_{k \rightarrow \infty} \bar{G}(\widehat{\mathbf{x}}^{\star (k)}) > G(\mathbf{x}^\star) - \nu\}$$
for any $\nu >0$. Let $\Omega(\mathbf{x}) = \{\lim_{k\rightarrow\infty}N^{(k)}(\mathbf{x})<N,\lim \inf_{k \rightarrow \infty} \bar{G}(\widehat{\mathbf{x}}^{\star (k)}) > G(\mathbf{x}^\star) - \nu\}$. It suffices to show that $\mbox{Pr}\{\Omega(\mathbf{x})\}=0$ for all $\mathbf{x}\in \mathcal{X}$.

Without loss of generality, we can follow Sun et al.~\citep{Sun_etal_2014} to set a predetermined large positive constant $\overline{M}$ such that $|\bar{G}(x)|< \overline{M}$. 
Thus, we have $|\widehat{\mu}_0|=[\mathbf{1}^\prime (\Sigma+V)^{-1}
	\mathbf{1}]^{-1}\mathbf{1}^\prime (\Sigma+V)^{-1}|\bar{\mathbf{G}}_{\mathcal{P}_o}| \leq \overline{M}$. Since $\Sigma$ is symmetric positive definite and $V$ is diagonal positive definite, we can apply eigendecomposition to $\Sigma$ such that $\Sigma=Q\Lambda Q^\prime$, $Q^{-1}=Q^\prime$. Furthermore, by Woodbury identity~\cite{GoluVanl96}, $[\Lambda+Q^{-1}VQ]^{-1}=\Lambda^{-1}-\Lambda^{-1}Q^\prime(V^{-1}+Q\Lambda^{-1}Q^{-1})^{-1}Q\Lambda^{-1}$ holds. Then, we have the following inequality,
\begin{eqnarray}
\mathbf{1}^\prime [\Sigma+V]^{-1}\mathbf{1}&=&\mathbf{1}^\prime [Q\Lambda Q^\prime+V]^{-1}\mathbf{1} \nonumber \\
&=& \mathbf{1}^\prime Q[\Lambda+Q^{-1}VQ]^{-1}Q^\prime\mathbf{1}\nonumber\\
&=& \mathbf{1}^\prime Q\left[\Lambda^{-1} - \Lambda^{-1}Q^\prime (V^{-1} + Q\Lambda^{-1} Q^{-1})^{-1}Q\Lambda^{-1}\right] Q^\prime\mathbf{1}
\label{eq.205} \\
 &=& \mathbf{1}^\prime Q\Lambda^{-1}Q^\prime\mathbf{1} -\mathbf{1}^\prime Q \Lambda^{-1}Q^\prime (V^{-1} + Q\Lambda^{-1} Q^{-1})^{-1}Q\Lambda^{-1}Q^\prime\mathbf{1}
\nonumber\\
&\leq& \mathbf{1}^\prime Q\Lambda^{-1}Q^\prime\mathbf{1}
\label{eq.206}\\
&\leq& \mathbf{1}^\prime [Q\Lambda Q^\prime]^{-1}\mathbf{1} \nonumber \\
&\leq& \mathbf{1}^\prime \Sigma^{-1}\mathbf{1}.
\label{eq.207}
\end{eqnarray} 
Step~(\ref{eq.205}) follows by applying  Woodbury identity.
Step~(\ref{eq.206}) follows because $(V^{-1} + Q\Lambda^{-1} Q^{-1})^{-1}$ is symmetric positive definite. Thus we also have
	\begin{eqnarray}
	\widehat{G}(\mathbf{x})&=&\widehat{\mu}_0
	+ \Sigma(\mathbf{x},\cdot)^\prime [\Sigma+V]^{-1}
	(\bar{\mathbf{G}}_{\mathcal{P}_o}-\widehat{\mu}_0\cdot \mathbf{1})
	\nonumber \\
	& \leq &  \overline{M} +2\cdot\Sigma(\mathbf{x},\cdot)^\prime [\Sigma+V]^{-1}\mathbf{1}\cdot\overline{M}
	\nonumber \\
	& \leq& \overline{M}(1+2\tau^2\mathbf{1}^\prime \Sigma^{-1}\mathbf{1}). \nonumber
	\label{eq.meanupperbound}
	\end{eqnarray}
	The last step follows by applying (\ref{eq.207}).
Notice that $\tau^2 \Sigma^{-1}$ is correlation matrix defined by the set of first-stage design points $\mathcal{P}_o$ with $|\mathcal{P}_o| = K$. Denote this correlation matrix as $$R_{\mathcal{P}_o\subseteq \mathcal{X}}\equiv \begin{pmatrix}
    r(\mathbf{x}_1,\mathbf{x}_1) & r(\mathbf{x}_1,\mathbf{x}_2)  & \dots  & r(\mathbf{x}_1,\mathbf{x}_K) \\
    r(\mathbf{x}_2,\mathbf{x}_1) & r(\mathbf{x}_2,\mathbf{x}_2)  & \dots  & r(\mathbf{x}_2,\mathbf{x}_K) \\
    \vdots & \vdots & \vdots & \ddots \\
    r(\mathbf{x}_K,\mathbf{x}_1) & r(\mathbf{x}_K,\mathbf{x}_2)  & \dots  & r(\mathbf{x}_K,\mathbf{x}_K) \\
\end{pmatrix}. $$ By $|\mathcal{X}|<\infty$ from Assumption~\ref{assumption:discrete}, there is only finite number ($2^{|\mathcal{X}|} - 1$) of those correlation matrices. Then, let $d_{max} = \min_{\mathcal{P}_o \subseteq \mathcal{X}}\{1 + 2\mathbf{1}^\prime R_{\mathcal{P}_o}\mathbf{1}\}$, we have $\widehat{G}(\mathbf{x}) \leq \overline{M} d_{max}$ for any $\mathbf{x} \in \mathcal{X}$ and $k>0$.

	For any first-stage design point $\mathbf{x}$ that has been observed, without loss of generality, we can follow Sun et al.~\citep{Sun_etal_2014} to assign a small positive variance, say $\sigma_{min}^2$, to bound the prediction uncertainty, $s^2(\mathbf{x})\geq\sigma_{min}^2$ to guarantee the exploration for those design points. For any first-stage design point $\mathbf{x}$ that has not been observed, since the feasible set $\mathcal{X}$ is discrete, by applying Proposition~\ref{proposition:errorlowerbound}, we get $s^2(\mathbf{x})\geq \widehat{\tau}^2[1-r(\underline{d})]$, where $\underline{d}=\min_{\mathbf{x},\mathbf{x}^\prime \in \mathcal{X}}\{|\mathbf{x}-\mathbf{x}^\prime|\}$. By Assumptions~\ref{assumption:spatialvariane} and \ref{assumption:consistency},  $\widehat{\tau}^2$ is the ML estimate of $\tau^2>0$ and it is consistent. Thus, when $k$ is large, there is $\epsilon>0$ such that $\widehat{\tau}^2 \geq\tau^2-\epsilon>0$. Let $\underline{s}^2=(\tau^2-\epsilon)[1-r(\underline{d})]$ and $\underline{\sigma}\equiv \min(\sigma_{min},\underline{s})$. Then, we have $s(\mathbf{x}) \geq\underline{\sigma}$, depending only on $\mathcal{X}$, $\pmb{\phi}$, correlation function $r$.

Therefore, for any $x\in \mathcal{X}$ and $\omega \in \Omega(\mathbf{x})$, there exists $K(\omega)$ such that $\forall k >K(\omega)$, $\bar{G}(\widehat{\mathbf{x}}^{\star (k)}) > G(\mathbf{x}^\star) - \nu$. For any $k > K(\omega)$, let $c=\frac{G(\mathbf{x}^\star) -\nu - \overline{M} d_{max}}{\underline{\sigma}}$. Since $c<\frac{\bar{G}(\widehat{\mathbf{x}}^\star)-\widehat{G}(\mathbf{x})}{	s(\mathbf{x})}$, we have
	\begin{equation}
	\label{eq:posteriorlowerbound}
		\mbox{Pr}\{\widetilde{G}(\mathbf{x})
	<\bar{G}(\widehat{\mathbf{x}}^\star) \} =\mbox{Pr}\left\{\frac{\widetilde{G}(\mathbf{x})-\widehat{G}(\mathbf{x})}{	s(\mathbf{x})}
	< \frac{\bar{G}(\widehat{\mathbf{x}}^\star)-\widehat{G}(\mathbf{x})}{	s(\mathbf{x})}\right\} \geq \mbox{Pr}\left\{\frac{\widetilde{G}(\mathbf{x})-\widehat{G}(\mathbf{x})}{	s(\mathbf{x})}
	<c\right\} = \Phi(c) > 0\nonumber
	\end{equation}
where $\Phi(\cdot)$ is the CDF of a
standard normal random variable. Then we have 
	\begin{equation}
	f^{(k)}(\mathbf{x})\geq\frac{1}{|\mathcal{X}|}\Phi(c)>0.
	\label{eq: sampling_lowerbound}\nonumber
	\end{equation}
By applying Lemma~\ref{proposition1:revisit}, we can get $\mbox{Pr}\{\Omega(\mathbf{x})\}=0$ for all $\mathbf{x}\in \mathcal{X}$.
\end{proof}

\begin{proof} (Lemma~\ref{lemma2:gapconv0})  

 Lemma~(\ref{lemma1:revisit}) states that $N^{(k)}(\mathbf{x}) \rightarrow \infty$ for all $\mathbf{x}\in\mathcal{X}$ as the iteration $k \rightarrow \infty$ or the simulation budget $C\rightarrow \infty$. Notice that $N^{(k)}(\mathbf{x}) = \ceil{n_0 g^{t(\mathbf{x})-1}}$, the number of revisits $t(\mathbf{x}) \rightarrow \infty$
	as $k \rightarrow \infty$.
	That means $\alpha^{(k)}(\mathbf{x}) \rightarrow 0$ as $k\rightarrow \infty$. 
	
	Let $\Theta_{j}^{(k)}(\mathbf{x}) \equiv \{y_{j}\in \mathcal{Y}(\mathbf{x})\mid \mathbf{y}_{j} \mbox{ has been visited at iteration $k$}\}$ denote a set including all visited points in the local search for $j$-th scenario $\pmb{\xi}_{j}$ in $k$-th iteration. For any $\mathbf{y}_j \in \mathcal{Y}(\mathbf{x})$ if $\mathbf{y}_{j} \notin \Theta_{j}^{(k)}(\mathbf{x})$,
	the expected improvement is strictly positive,
	$$\mbox{E}_{\widetilde{q}_{\mathbf{x}}(\mathbf{y}_{j},\pmb{\xi}_{j})}\left[\mathrm{I}(\mathbf{y}_{j}, \pmb{\xi}_{j})\right] \equiv	\mathrm{E}_{\widetilde{q}_{\mathbf{x}}(\mathbf{y}_{j},\pmb{\xi}_{j})}\left[ \max \left(
 	q_{\mathbf{x}}(\widehat{\mathbf{y}}^\star_{j},\pmb{\xi}_{j})-\widetilde{q}_{\mathbf{x}}(\mathbf{y}_{j},\pmb{\xi}_{j})
 	,0\right)\right]>0,$$
 	otherwise
		$\mbox{E}_{\widetilde{q}_{\mathbf{x}}(\mathbf{y}_{j},\pmb{\xi}_{j})}\left[\mathrm{I}(\mathbf{y}_{j}, \pmb{\xi}_{j})\right]=0.$  Notice that $0<q_{\mathbf{x}}(\mathbf{y}_{j}, \pmb{\xi}_{j}) \leq q_{\max} <\infty$ for any $\mathbf{x}$ and $\mathbf{y}_{j}$ ($q_{\max}$ exists since $q(\mathbf{x},\mathbf{y},\pmb{\xi})$ is finite continuous function). From Assumption~\ref{assumption:bounded}, we have $\left\Vert q\right\Vert_{\mathcal{H}_\phi(\mathcal{X})}\leq \overline{M}$. Let $\kappa(x)\equiv x\Phi(x)+\varphi(x)$. By applying Assumption~\ref{assumption:discrete} and Proposition~\ref{proposition:errorlowerbound}, we have  $s^2_{\mathbf{x}}(\mathbf{y}_{j}, \pmb{\xi}_{j}) \geq \sigma^2[1 - R(\underline{d})] >0$ for $\mathbf{y}_j \notin \Theta_{j}^{(k)}(\mathbf{x})$, where $\underline{d}=\min_{\mathbf{x} \in \mathcal{X};\mathbf{y},\mathbf{y}^\prime\in \mathcal{Y}(\mathbf{x})}\{| \mathbf{y}- \mathbf{y}^\prime |\}$ is the smallest distance between two second stage decision points. By Assumption~\ref{assumption:consistency}, under some regularity conditions, MLE $\widehat{\tau}^2$ is consistent. Thus when $k$ is large enough, there is $\epsilon$ such that $\widehat{\sigma}^2 \geq \sigma^2-\epsilon>0$ by Assumption~\ref{assumption:spatialvariane}.
		Then we have  $\underline{s}^2=(\sigma^2-\epsilon)[1 - R(\underline{d})]$ depending only on $\pmb{\xi}_j$, $R$, $\mathcal{X}$ and $\mathcal{Y}(x)$. By applying Lemma~8 in \cite{Bull:2011:CRE:1953048.2078198}, we have 
		$$\mbox{E}_{\widetilde{q}_{\mathbf{x}}(\mathbf{y}_{j},\pmb{\xi}_{j})}\left[\mathrm{I}(\mathbf{y}_{j}, \pmb{\xi}_{j})\right]\geq \sigma\kappa(-\overline{M}/\sigma)s_{\mathbf{x}}(\mathbf{y}_{j},\pmb{\xi}_{j})\geq\sigma\kappa(-\overline{M}/\sigma)\underline{s} >0.$$
		Thus, since $\alpha^{(k)}(\mathbf{x}) \rightarrow 0$ as $k\rightarrow \infty$, $\exists k_0>0$ such that for any $\mathbf{y}_j \in\mathcal{Y}(\mathbf{x})/\Theta_{j}^{(k)}(\mathbf{x})$ and $k>k_0$ we have 
	$$\mbox{E}_{\widetilde{q}_{\mathbf{x}}(\mathbf{y}_{j},\pmb{\xi}_{j})}\left[\mathrm{I}(\mathbf{y}_{j}, \pmb{\xi}_{j})\right] \equiv	\mathrm{E}_{\widetilde{q}_{\mathbf{x}}(\mathbf{y}_{j},\pmb{\xi}_{j})}\left[ \max \left(
 	q_{\mathbf{x}}(\widehat{\mathbf{y}}^\star_{j},\pmb{\xi}_{j})-\widetilde{q}_{\mathbf{x}}(\mathbf{y}_{j},\pmb{\xi}_{j})
 	,0\right)\right]>\alpha^{(k)}(\mathbf{x})q_{max}.$$

 	According to the stopping criteria for second-stage optimization \eqref{eq.condAlpha},	
	we eventually run simulations at all $\mathbf{y}_j \in\mathcal{Y}(\mathbf{x})/\Theta_{j}^{(k)}(\mathbf{x})$ as $\alpha^{(k)}(\mathbf{x})\rightarrow 0$.
	Then by the definition of second-stage optimality gap, for $k >k_0$, we have zero optimality gap for any $\pmb{\xi}_{j}$,
	$$
	\delta^{(k)}(\mathbf{x},\pmb{\xi}_{j}) = q_{\mathbf{x}}(\widehat{\mathbf{y}}^\star_{j}, \pmb{\xi}_{j})-q_{\mathbf{x}}(\mathbf{y}^\star_{j}, \pmb{\xi}_{j})=\min_{\mathbf{y} \in \Theta_{j}^{(k)}(\mathbf{x})}q_{\mathbf{x}}(\mathbf{y}, \pmb{\xi}_{j}) - \min_{\mathbf{y} \in \mathcal{Y}(\mathbf{x})}q_{\mathbf{x}}(\mathbf{y}, \pmb{\xi}_{j}) = 0
	$$ for $j=1,2,\ldots,N(\mathbf{x})$. It also implies
 	$\delta(\widehat{\mathbf{x}}, \pmb{\xi}_{j}) \stackrel{a.s.}\rightarrow 0$ as $k\rightarrow \infty$ because
	$$\mbox{Pr}\{\lim_{k\rightarrow \infty}\delta(\widehat{\mathbf{x}}, \pmb{\xi}_{j})=0\}=1.$$
\end{proof}


\section{Proof of Theorem~\ref{thm:conv}}

\begin{proof} 
Recall $G(\cdot)$ is the objective function, 
	 $
	 G(\mathbf{x}_i)
	={c_0(\mathbf{x}_i)}+\mbox{E}_{\pmb{\xi}}
	\left[
	 {q}(\mathbf{x}_i, \mathbf{y}^\star(\mathbf{x}_i,\pmb{\xi}), \pmb{\xi})\right]$; $\bar{G}^c(\mathbf{x}_i)$ is the SAA of $G(\mathbf{x})$ without the optimality gap,
	$
\bar{G}^c(\mathbf{x}_i)
	= {c_0(\mathbf{x}_i)}+
	\frac{1}{N(\mathbf{x}_i)} \sum_{j=1}^{N(\mathbf{x}_i)}
	 {q}(\mathbf{x}_i, \mathbf{y}^\star(\mathbf{x}_i,\pmb{\xi}_{ij}), \pmb{\xi}_{ij});
$
and $\bar{G}(\mathbf{x}_i)$ is the SAA of $G(\mathbf{x})$ with the optimality gap, $
    \bar{G}(\mathbf{x}_i) = c_0(\mathbf{x}_i) +
    \frac{1}{N(\mathbf{x}_i)} \sum_{j=1}^{N(\mathbf{x}_i)} \{ q(\mathbf{x}_i,\widehat{y}^\star(\mathbf{x}_i,\pmb{\xi}_{ij}),\pmb{\xi}_{ij})-\mbox{E}[\delta(\mathbf{x},\pmb{\xi}_{ij})] \}.  $
	For any $\nu>0$, since
	$\bar{G}(\mathbf{x})$, ${G}(\mathbf{x})$ and $\bar{G}^c(\mathbf{x})$ are all finite,
	by applying the triangle inequality, we have
	\begin{eqnarray}
	\lefteqn{|\bar{G}(\mathbf{x}) - {G}(\mathbf{x}) | =|\bar{G}(\mathbf{x}) -\bar{G}^c(\mathbf{x})+\bar{G}^c(\mathbf{x})- {G}(\mathbf{x}) | }
	\nonumber \\ 
	& < & |\bar{G}(\mathbf{x}) -\bar{G}^c(\mathbf{x})| +|\bar{G}^c(\mathbf{x})- {G}(\mathbf{x}) | 
	\nonumber\\
	& < &\nu  \label{eq.triangle} 
	\end{eqnarray}
	as $k \rightarrow \infty$. 
	For the last inequality in (\ref{eq.triangle}), as $k\rightarrow \infty$, $|\bar{G}^c(\mathbf{x})- {G}(\mathbf{x}) |<\nu/2$ holds by applying Lemma~\ref{lemma1:revisit}, i.e. $N^{(k)}(\mathbf{x}) \rightarrow \infty$ and the strong law of large numbers, while $|\bar{G}(\mathbf{x}) -\bar{G}^c(\mathbf{x})|<\nu/2$ holds by applying Lemma~\ref{lemma2:gapconv0}.
	Thus, for any $\mathbf{x} \in \mathcal{X}$ w.p.1., we have
	\begin{equation}
	\mbox{Pr}\{ |\bar{G}(\mathbf{x}) - {G}(\mathbf{x}) | > \nu   \quad \mbox{i.o.} \} =0
	\label{eq.mid11}
	\end{equation}
	as $k \rightarrow \infty$.
	
	After that, we want to bound $|\bar{G}(\widehat{\mathbf{x}}^{\star (k)}) - G(\mathbf{x}^\star)|$. 
	Two cases can happen: either $\bar{G}(\widehat{\mathbf{x}}^{\star (k)}) \leq  G(\mathbf{x}^\star)$ or $\bar{G}(\widehat{\mathbf{x}}^{\star (k)}) >  G(\mathbf{x}^\star)$.
	When $\bar{G}(\widehat{\mathbf{x}}^{\star (k)}) \leq  G(\mathbf{x}^\star)$, notice 
	$G(\mathbf{x}^\star) \leq G(\widehat{\mathbf{x}}^{\star (k)})$
	since $G(\mathbf{x}^\star)$ is the optimal objective value. 
	Then, we have
	\[
	\bar{G}(\widehat{\mathbf{x}}^{\star (k)}) \leq  G(\mathbf{x}^\star) \leq G(\widehat{\mathbf{x}}^{\star (k)})
	\]
	which leads to 
	\begin{equation}
	\begin{aligned}
	|\bar{G}(\widehat{\mathbf{x}}^{\star (k)}) - G(\mathbf{x}^\star)| & \leq |\bar{G}(\widehat{\mathbf{x}}^{\star (k)}) - G(\widehat{\mathbf{x}}^{\star (k)})|
	& \leq   \max_{\mathbf{x} \in \mathcal{X}} | \bar{G}(\mathbf{x}) -G(\mathbf{x}) | 
	\end{aligned}
	\label{eq.mid2}
	\end{equation}
	
	When $\bar{G}(\widehat{\mathbf{x}}^{\star (k)}) >  G(\mathbf{x}^\star)$, notice
	$
	\bar{G}(\mathbf{x}^\star) \geq \bar{G}(\widehat{\mathbf{x}}^{\star (k)})
	$
	since $\bar{G}(\widehat{\mathbf{x}}^{\star (k)})$ is the current optimal estimate.
	Then, we have
	\[
	G(\mathbf{x}^\star) < \bar{G}(\widehat{\mathbf{x}}^{\star (k)}) \leq \bar{G}(\mathbf{x}^\star)
	\]
	which leads to
	\begin{equation}
	\begin{aligned}
	|\bar{G}(\widehat{\mathbf{x}}^{\star (k)}) - G(\mathbf{x}^\star)|  & \leq |\bar{G} (\mathbf{x}^{\star}) - G(\mathbf{x}^\star)|
	& \leq   \max_{\mathbf{x} \in \mathcal{X}} | \bar{G}(\mathbf{x}) -G(\mathbf{x}) | 
	\end{aligned}
	\nonumber
	\end{equation}
	Hence, we have
	\begin{equation}
	|\bar{G}(\widehat{\mathbf{x}}^{\star (k)}) - G(\mathbf{x}^\star)| \leq \max_{\mathbf{x} \in \mathcal{X}} | \bar{G}(\mathbf{x}) -G(\mathbf{x}) |
	\label{eq.mid3}
	\end{equation}
	
	By applying (\ref{eq.mid2}) and (\ref{eq.mid3}), for any $\nu >0$, we have
	\begin{eqnarray}
	\mbox{Pr}\{ |\bar{G}(\widehat{\mathbf{x}}^{\star (k)}) - G(\mathbf{x}^\star)| > \nu  \quad \mbox{i.o.} \}  &\leq& \mbox{Pr}\{ \max_{\mathbf{x} \in \mathcal{X}} | \bar{G}(\mathbf{x}) -G(\mathbf{x}) | > \nu
		\quad \mbox{i.o.} \} 
	\nonumber \\
	& \leq &\sum_{\mathbf{x} \in \mathcal{X}}  \mbox{Pr}\{ | \bar{G}(\mathbf{x}) -G(\mathbf{x}) | > \nu  \quad \mbox{i.o.} \} 
	\nonumber \\
	& =& 0 
	\label{eq.mid6}
	\end{eqnarray}
	as $k \rightarrow \infty$. By Assumption~\ref{assumption:discrete}, the inequality~(\ref{eq.mid6}) holds by applying Equation~(\ref{eq.mid11}).
	Thus, the convergence follows, $\bar{G}(\widehat{\mathbf{x}}^{\star (k)}) {\rightarrow} G(\mathbf{x}^\star)$ w.p.1 as the budget $C \rightarrow \infty$ or the iteration $k\rightarrow \infty$. 
\end{proof} 
  

\bibliographystyle{unsrt} 
\bibliography{paper2stage}

\begin{thebibliography}{10}

\bibitem{Beale_1955}
E.~M.~L. Beale.
\newblock On minimizing a convex function subject to linear inequalities.
\newblock {\em Journal of the Royal Statistical Society. Series B
  (Methodological)}, 17(2):173--184, 1955.

\bibitem{Dantzig_1955}
George~B. Dantzig.
\newblock Linear programming under uncertainty.
\newblock {\em Management Science}, 1(3/4):197--206, 1955.

\bibitem{Shapiro_Philpott_2007}
Alexander Shapiro and Andy Philpott.
\newblock A tutorial on stochastic programming, 2007.

\bibitem{shapiro_2009}
Alexander Shapiro, Darinka Dentcheva, and Andrzej Ruszczynski.
\newblock {\em Lectures on Stochastic Programming: Modeling and Theory}.
\newblock SIAM, Philadelphia, 2009.

\bibitem{Liu_etal_2011}
Jingang Liu, Chihui Li, Feng Yang, Hong Wan, and Reha Uzsoy.
\newblock Production planning in semiconductor manufacturing via simulation
  optimization.
\newblock In S.~Jain, R.~R. Creasey, J.~Himmelspach, K.~P. White, and M.~Fu,
  editors, {\em Proceedings of the 2011 Winter Simulation Conference}, pages
  3617 -- 3627, Piscataway, New Jersey, 2011. Institute of Electrical and
  Electronics Engineers, Inc.

\bibitem{Ekin_Polson_Soyer_2014}
Tahir Ekin, Nicholas~G. Polson, and Refik Soyer.
\newblock Augmented markov chain monte carlo simulation for two-stage
  stochastic programs with recourse.
\newblock {\em Decision Analysis}, 11(4):250--264, 2014.

\bibitem{Frauendorfer_1988}
Karl Frauendorfer.
\newblock Solving slp recourse problems with arbitrary multivariate
  distributions: The dependent case.
\newblock {\em Mathematics of Operations Research}, 13(3):377--394, 1988.

\bibitem{Sen_Higle_1991}
Julia~L. Higle and Suvrajeet Sen.
\newblock Stochastic decomposition: {A}n algorithm for two-stage linear
  programs with recourse.
\newblock {\em Mathematics of Operations Research}, 16(3):650--669, 1991.

\bibitem{Higle_Sen_1994}
Julia~L. Higle and Suvrajeet Sen.
\newblock Finite master programs in regularized stochastic decomposition.
\newblock {\em Mathematical Programming}, 67(1):143--168, 1994.

\bibitem{Ahmed_Shapiro_2002}
Shabbir Ahmed, Alexander Shapiro, and Er~Shapiro.
\newblock The sample average approximation method for stochastic programs with
  integer recourse.
\newblock {\em SIAM J. Optim.}, 12:479--502, 2002.

\bibitem{Spall_1992}
James~C. Spall.
\newblock Multivariate stochastic approximation using simultaneous perturbation
  gradient approximation.
\newblock {\em IEEE Trans. Autom. Control}, 37(3):332--341, 1992.

\bibitem{Jones_1998}
Donald~R. Jones, Matthias Schonlau, and William~J. Welch.
\newblock Efficient global optimization of expensive black-box functions.
\newblock {\em J. Glob. Optim.}, 13(4):455--492, 1998.

\bibitem{Hong_Nelson_2006}
L.~Jeff Hong and Barry~L. Nelson.
\newblock Discrete optimization via simulation using compass.
\newblock {\em Operations Research}, 54(1):115--129, 2006.

\bibitem{Huang_etal_2006G}
D.~Huang, T.~T. Allen, W.~I. Notz, and N.~Zeng.
\newblock Global optimization of stochastic black-box systems via sequential
  kriging meta-models.
\newblock {\em J. Glob. Optim.}, 34(3):441--466, 2006.

\bibitem{Sun_etal_2014}
Lihua Sun, L.~Jeff Hong, and Zhaolin Hu.
\newblock Balancing exploitation and exploration in discrete optimization via
  simulation through a gaussian process-based approach.
\newblock {\em Operations Research}, 62(6):1416--1438, 2014.

\bibitem{Henderson_Nelson_2006}
Shane~G. Henderson and Barry~L. Nelson.
\newblock {\em Handbooks in Operations Research and Management Science},
  volume~13.
\newblock Elsevier, 2006.

\bibitem{Fu_2014}
Michael~C. Fu, editor.
\newblock {\em Handbook of Simulation Optimization}.
\newblock Springer, New York, NY, 2014.

\bibitem{Takriti_etal_1996}
Samer Takriti, John~R. Birge, and Erik Long.
\newblock A stochastic model for the unit commitment problem.
\newblock {\em IEEE Trans. Power Syst.}, 11(3):1497--1508, 1996.

\bibitem{Ruiz_etal_2009}
Pablo~A. Ruiz, C.~Russ Philbrick, and Peter~W. Sauer.
\newblock Wind power day-ahead uncertainty management through stochastic unit
  commitment policies.
\newblock In {\em 2009 IEEE/PES Power Systems Conference and Exposition}, pages
  1--9, 2009.

\bibitem{Tuohy_etal_2009}
Aidan Tuohy, Peter Meibom, Eleanor Denny, and Mark O'Malley.
\newblock Unit commitment for systems with significant wind penetration.
\newblock {\em IEEE Trans. Power Syst.}, 24(2):592 -- 601, 2009.

\bibitem{Dempster_etal_1981}
M.~A.~H. Dempster, M.~F. Fisher, L.~Jansen, and A.~H.~G. Rinnooy~Kan.
\newblock Analytical evaluation of hierarchical planning systems.
\newblock {\em Operations Research}, 29(4):707--716, 1981.

\bibitem{Birge_Dempster_1996}
John~R. Birge and M.~A.~H. Dempster.
\newblock Stochastic programming approaches to stochastic scheduling.
\newblock {\em J. Glob. Optim.}, 9(3):417--451, 1996.

\bibitem{Bailey_Jensen_Morton_1999}
T.~Glenn Bailey, Paul~A. Jensen, and David~P. Morton.
\newblock Response surface analysis of two-stage stochastic linear programming
  with recourse.
\newblock {\em Naval Research Logistics}, 46(7):753--776, 1999.

\bibitem{Slyke_Wets_1969}
R.~M. Van~Slyke and Roger Wets.
\newblock L-shaped linear programs with applications to optimal control and
  stochastic programming.
\newblock {\em SIAM Journal on Applied Mathematics}, 17(4):638--663, 1969.

\bibitem{Louveaux_1980}
François~V. Louveaux.
\newblock A solution method for multistage stochastic programs with recourse
  with application to an energy investment problem.
\newblock {\em Operations Research}, 28(4):889--902, 1980.

\bibitem{Birge_Louveaux_1988}
John~R. Birge and François~V. Louveaux.
\newblock A multicut algorithm for two-stage stochastic linear programs.
\newblock {\em European Journal of Operational Research}, 34(3):384--392, 1988.

\bibitem{Amaran_etal_2016}
Satyajith Amaran, Nikolaos~V. Sahinidis, Bikram Sharda, and Scott~J. Bury.
\newblock {Simulation optimization: {A} review of algorithms and applications}.
\newblock {\em Annals of Operations Research}, 240(1):351--380, 2016.

\bibitem{Nelson_2016}
Barry~L. Nelson.
\newblock `some tactical problems in digital simulation' for the next 10 years.
\newblock {\em Journal of Simulation}, 10(1):2--11, 2016.

\bibitem{Zabinsky_2009}
Zelda~B. Zabinsky.
\newblock {\em Random Search Algorithms}.
\newblock Wiley, 2009.

\bibitem{Xu_Nelson_Hong_2010}
Jie Xu, Barry~L. Nelson, and L.~Jeff Hong.
\newblock Industrial strength compass: A comprehensive algorithm and software
  for optimization via simulation.
\newblock {\em ACM Trans. Model. Comput. Simul.}, 20(1):3:1--3:29, 2010.

\bibitem{DANTZIG_GLYNN_1990}
George~B. Dantzig and Peter~W. Glynn.
\newblock Parallel processors for planning under uncertainty.
\newblock {\em Annals of Operations Research}, 22(1):1--21, 1990.

\bibitem{Shapiro_Homem-de-Mello_1998}
Alexander Shapiro and Tito Homem-de Mello.
\newblock A simulation-based approach to two-stage stochastic programming with
  recourse.
\newblock {\em Mathematical Programming}, 81(3):301--325, 1998.

\bibitem{Shapiro_Homem-de-Mello_2000}
Alexander Shapiro and Tito Homem-de Mello.
\newblock On the rate of convergence of optimal solutions of monte carlo
  approximations of stochastic programs.
\newblock {\em SIAM J. Optim.}, 11(1):70--86, 2000.

\bibitem{Shapiro_Nemirovski_2005}
Alexander Shapiro and Arkadi Nemirovski.
\newblock {\em Continuous Optimization}, volume~99 of {\em Applied
  Optimization}.
\newblock Springer, Boston, MA, 2005.

\bibitem{Quan_etal_2013}
Ning Quan, Jun Yin, Szu~Hui Ng, and Loo~Hay Lee.
\newblock Simulation optimization via kriging: {A} sequential search using
  expected improvement with computing budget constraints.
\newblock {\em IIE Transactions}, 45(7):763--780, 2013.

\bibitem{ankenman_nelson_staum_2010}
Bruce Ankenman, Barry~L. Nelson, and Jeremy Staum.
\newblock Stochastic kriging for simulation metamodeling.
\newblock {\em Operations Research}, 58(2):371--382, 2010.

\bibitem{Xie_Yi_2016}
Wei Xie and Yuan Yi.
\newblock A simulation-based prediction framework for two-stage dynamic
  decision making.
\newblock In T.~M.~K. Roeder, P.~I. Frazier, R.~Szechtman, E.~Zhou, T.~Huschka,
  and S.~E. Chick, editors, {\em Proceedings of the 2016 Winter Simulation
  Conference}, pages 2304--2315, Piscataway, New Jersey, 2016. Institute of
  Electrical and Electronics Engineers, Inc.

\bibitem{WarrenPowell_2014}
Warren Powell.
\newblock Clearing the jungle of stochastic optimization.
\newblock {\em INFORMS Tutorials in Operations Research}, pages 109--137, 2014.

\bibitem{Shapiro_2008}
Alexander Shapiro.
\newblock Stochastic programming approach to optimization under uncertainty.
\newblock {\em Mathematical Programming}, 112(1):183--220, 2008.

\bibitem{Chien_etal_2011}
{Chen Fu} Chien, St{\'e}phane Dauz{\`e}re-P{\'e}r{\`e}s, Hans Ehm, {John W.}
  Fowler, Zhibin Jiang, Shekar Krishnaswamy, {Tae Eog} Lee, Lars M{\"o}nch, and
  Reha Uzsoy.
\newblock Modelling and analysis of semiconductor manufacturing in a shrinking
  world: Challenges and successes.
\newblock {\em European Journal of Industrial Engineering}, 5(3):254--271,
  2011.

\bibitem{Ma_2011}
Yao Ma.
\newblock {\em Risk Management in Biopharmaceutical Supply Chains}.
\newblock PhD thesis, University of California, Berkeley, 2011.

\bibitem{Kaminsky_Wang_2015}
Philip Kaminsky and Yang Wang.
\newblock Analytical models for biopharmaceutical operations and supply chain
  management: A survey of research literature.
\newblock {\em Pharmaceutical Bioprocessing}, 3(1):61--73, 2015.

\bibitem{xie_nelson_staum_2010}
Wei Xie, Barry~L. Nelson, and Jeremy Staum.
\newblock The influence of correlation functions on stochastic kriging
  metamodels.
\newblock In {B. Johansson, S. Jain, J. Montoya-Torres, J. Hugan, and E.
  Yucesan}, editor, {\em Proceedings of the 2010 Winter Simulation Conference},
  pages 1067--1078, Piscataway, New Jersey, 2010. Institute of Electrical and
  Electronics Engineers, Inc.

\bibitem{Sacks_Welch_Mitchell_Wynn_1989}
Jerome Sacks, William~J. Welch, Toby~J. Mitchell, and Henry~P. Wynn.
\newblock Design and analysis of computer experiments.
\newblock {\em Statistical Science}, 4(4):409--423, 1989.

\bibitem{Kim_Nelson_2007}
Seong-Hee Kim and Barry~L. Nelson.
\newblock Recent advances in ranking and selection.
\newblock In {\em Proceedings of the 2007 Winter Simulation Conference}, pages
  162--172, Piscataway, New Jersey, 2007. Institute of Electrical and
  Electronics Engineers, Inc.

\bibitem{Tsai_Barry_Staum_2009}
Shing~Chih Tsai, Barry~L. Nelson, and Jeremy Staum.
\newblock {\em Combined screening and selection of the best with control
  variates}, volume 133 of {\em International Series in Operations Research \&
  Management Science}.
\newblock Springer, New York, 2009.

\bibitem{Lesnevski_Nelson_Staum_2007}
Vadim Lesnevski, Barry~L. Nelson, and Jeremy Staum.
\newblock Simulation of coherent risk measures based on generalized scenarios.
\newblock {\em Management Science}, 53(11):1756--1769, 2007.

\bibitem{Lesnevski_Nelson_Staum_2008}
Vadim Lesnevski, Barry~L. Nelson, and Jeremy Staum.
\newblock An adaptive procedure for estimating coherent risk measures based on
  generalized scenarios.
\newblock {\em Journal of Computational Finance}, 11(4):1--31, 2008.

\bibitem{Wald1949}
Abraham Wald.
\newblock Note on the consistency of the maximum likelihood estimate.
\newblock 20(4):595--601, 1949.

\bibitem{Birge:2011:ISP:2031490}
John~R. Birge and Franois Louveaux.
\newblock {\em Introduction to Stochastic Programming}.
\newblock Springer Publishing Company, Incorporated, 2nd edition, 2011.

\bibitem{GoluVanl96}
Gene~H. Golub and Charles~F. Van~Loan.
\newblock {\em Matrix Computations}.
\newblock The Johns Hopkins University Press, third edition, 1996.

\bibitem{Bull:2011:CRE:1953048.2078198}
Adam~D. Bull.
\newblock Convergence rates of efficient global optimization algorithms.
\newblock {\em J. Mach. Learn. Res.}, 12:2879--2904, November 2011.

\end{thebibliography}
%
\end{document}